\newcommand{\grad}{{\nabla}}   % gradient
\newcommand{\zero}{\mathbf{0}}  % zero vector
\newcommand{\one}{\mathbf{1}}   % one vector
\newcommand{\real}{\mathbb{R}}  % set of real numbers
\newcommand{\diag}{{\rm diag}}  % diag
\newcommand{\bdiag}{{\rm blkdiag}}  % diag
\newcommand{\col}{\mathrm{col}}     % column
\newcommand{\Tr}{\mathop{\rm Tr}}    % trace
\newcommand{\prox}{\operatorname{prox}}    % proximal operator
\newcommand{\Ex}{\mathop{\mathbb{E}{}}}
\newcommand{\eg}{{\it e.g.}}
\newcommand{\ie}{{\it i.e.}}
\newcommand{\tran}{^{\textit{\footnotesize \texttt{T}}}} % alternative transpose
\newcommand{\define}{\triangleq} % triangle equality
\newcommand{\qd}{\hfill{$\blacksquare$}}
\newcommand{\alg}[1]{\textsf{#1}} % algorithm name #1 font
\def\bxi{\boldsymbol{\xi}}
\def\A{{\mathbf{A}}}
\def\B{{\mathbf{B}}}
\def\D{{\mathbf{D}}}
\def\F{{\mathbf{F}}}
\def\I{{\mathbf{I}}}
\def\P{{\mathbf{P}}}
\def\Q{{\mathbf{Q}}}
\def\V{{\mathbf{V}}}
\def\W{{\mathbf{W}}}
\def\d{{\mathbf{d}}}
\def\f{{\mathbf{f}}}
\def\g{{\mathbf{g}}}
\def\h{{\mathbf{h}}}
\def\s{{\mathbf{s}}}
\def\u{{\mathbf{u}}}
\def\v{{\mathbf{v}}}
\def\x{{\mathbf{x}}}
\def\y{{\mathbf{y}}}
\def\z{{\mathbf{z}}}
\newcommand{\cE}{{\mathcal{E}}}
\newcommand{\cN}{{\mathcal{N}}}
\newcommand{\cO}{{\mathcal{O}}}
\newcommand{\cU}{{\mathcal{U}}}
\def\evdots{\vbox{\baselineskip=2pt \lineskiplimit=0pt 
		\kern6pt \hbox{$.$}\hbox{$.$}\hbox{$.$}}}  %tighter vdots   
\newtheorem{lemma}{{Lemma}}
\newtheorem{theorem}{{Theorem}}
\newtheorem{corollary}{{Corollary}}
\newtheorem{proposition}{{Proposition}}
\newtheorem{assumption}{{Assumption}}
\newtheorem{remark}{{Remark}}
\begin{document}

	\title{\textsf{\bfseries  Local Exact-Diffusion for Decentralized Optimization and Learning}}
	\author{Sulaiman A. Alghunaim \\
		\small Kuwait University \\
		\texttt{\small sulaiman.alghunaim@ku.edu.kw}
 \vspace{5mm}\\
	}
	\maketitle
	
%	\tableofcontents
%	\newpage
	
	%--------------------------------------------
	%   Start notes below
	%---------------------------------------

	\begin{abstract}
	Distributed optimization methods with local updates have recently attracted a lot of attention due to their potential to reduce the communication cost of distributed methods. In these algorithms, a collection of nodes performs several local updates based on their local data, and then they communicate with each other to exchange estimate information. While there have been many studies on distributed local methods with centralized network connections, there has been less work on decentralized networks.

	In this work, we propose and investigate a locally updated decentralized method called Local Exact-Diffusion (LED). We establish the convergence of LED in both convex and nonconvex settings for the stochastic online setting.   Our convergence rate  improves over the rate of existing decentralized methods. When we specialize the network to the centralized case, we recover the state-of-the-art bound for centralized methods. We also link LED to several other  independently studied  distributed methods, including  Scaffnew, FedGate, and VRL-SGD.  Additionally, we numerically investigate the benefits of local updates for decentralized networks and demonstrate the effectiveness of the proposed method.
	
	\end{abstract}
	
	\section{Introduction}
This work examines the distributed consensus optimization problem, as formally presented in \eqref{main_prob}. In this setup, a network of nodes (also referred to as agents, workers, or clients) collaboratively seeks to minimize the average of the nodes' objectives. This formulation is appealing for large scale data problems because it is more efficient to use distributed solution methods to reduce the computational burden for large data sets.   In terms of communication protocol, distributed methods  can be classified as either {\em centralized} or {\em decentralized}\footnote{In this work, the term ``distributed methods''  refers to the class of methods that includes both centralized (server-workers) and decentralized approaches.}.   Centralized distributed methods require all nodes to communicate with a central server (\eg, server-workers connection) without sharing private data, as seen in parallel optimization \cite{boyd2011admm} and federated learning \cite{konevcny2016federated,mcmahan2017communication}. In this setup, there is a central node that is responsible for aggregating local variables and updating model estimates. In contrast, decentralized distributed methods are ``fully distributed'' that are designed for arbitrary connected network topologies such as line, ring, grid, and random graphs. These methods require nodes to communicate only with their immediate neighbors \cite{nedic2009distributed,lopes2008diffusion}. It's important to note that decentralized methods can adapt to a centralized setting when the network is fully connected.

In this paper, we consider  a group of $N$ nodes, connected via an undirected decentralized network, collaborating to solve the optimization problem:
	\begin{align} \label{main_prob}
		\begin{array}{ll}
			\min\limits_{x \in \real^m} &  f(x) \define \displaystyle \frac{1}{N}\sum\limits_{i=1}^N f_i(x), \quad f_i(x) \define \Ex [F_i(x;\xi_i)],
		\end{array}
	\end{align}
	where  $f_i: \real^m \rightarrow \real$ ($i=1,\dots,N$) represents a smooth function known only to node $i$. This function is defined as the expected value of some loss function $F_i(\cdot ; \xi_i)$ over the random variable or data $\xi_i$. We focus on the stochastic online setting, in which each node has access only to random samples of its data $\{\xi_i\}$. Problems of the form \eqref{main_prob} have received a lot of attention in control and engineering communities \cite{nedic2009distributed,alghunaim2019decentralized,cattivelli2010diffusion,chen2013distributed}, as well as in the machine learning community \cite{lian2017can,karimireddy2019scaffold,koloskova2020unified} .

Our main contribution is the proposal and study of a local variant of the {\em decentralized \alg{Exact-Diffusion}} algorithm \cite{yuan2019exactdiffI,yuan2020influence}  (see also \cite{alghunaim2019decentralized}), where nodes employ multiple local updates between communication rounds.	  We establish  the algorithm's convergence in  both convex and nonconvex settings. . Our bounds improve upon those of decentralized methods and match the best known results for centralized methods. Before we formally state our contributions, we will first discuss some related works.

\subsection{Related works}
We  begin by discussing relevant centralized methods, which require a central server for implementation. One of the most  popular centralized methods is \alg{FedAvg},  which involves  a random subset of nodes performing multiple local stochastic gradient descent (\alg{SGD}) updates at each round; They then send their estimates (parameters) to the central server, which averages these estimates and sends them back to replace the local estimates \cite{mcmahan2017communication}.  It should be noted that \alg{FedAvg} is often called \alg{Local-SGD}\footnote{In this paper, we refer to the case where all of the nodes participate in each round as \alg{Local-SGD}.}. Several works have analyzed  \alg{FedAvg} and \alg{Local-SGD} \cite{stich2018local,khaled2019tighter,wang2019adaptive,wang2021cooperative,li2020on,woodworth2020minibatch}. It has been observed that the performance of \alg{FedAvg} and \alg{Local-SGD} is suboptimal for heterogeneous data and that an increased number of local steps can lead to worse performance \cite{zhao2018federated}. One major reason is that node estimates drift toward their local solutions due to local updates, resulting in a biased solution \cite{li2020on,woodworth2020minibatch,karimireddy2019scaffold}. To correct this drift in \alg{FedAvg} and \alg{Local-SGD}, several algorithms have been proposed,   including   \alg{SCAFFOLD} \cite{karimireddy2019scaffold}, \alg{FedDyn}  \cite{zhang2021fedpd}, \alg{FedPD} \cite{durmus2021federated}, \alg{VRL-SGD} \cite{liang2019variance}, and \alg{FedGATE} \cite{haddadpour2021federated}. These methods, however, are only applicable to centralized connections.

In this work, we focus on decentralized setups as presented in \cite{lopes2008diffusion,nedic2009distributed}.  The most extensively studied method for this setup is the decentralized stochastic gradient descent method (\alg{DSGD})  \cite{ram2010distributed,chen2012diffusion,chen2013distributed,lian2017can,wang2021cooperative,koloskova2020unified}\footnote{\alg{DSGD} has two main implementations depending on the combination step: the adapt-then-combine (ATC) implementation (aka diffusion) and the non-ATC implementation (aka consensus) \cite{cattivelli2010diffusion,sayed2014nowbook}.  Both implementations are termed \alg{DSGD} in this paper.}.   The study in \cite{lian2017can} showcased that \textsf{\alg{DSGD}} achieves the centralized \alg{SGD} rate asymptotically, a distributed trait termed as {\em linear speedup} \cite{dekel2012optimal}.  However,  \alg{DSGD}  converges to a biased solution, and this bias is further negatively influenced by network sparsity \cite{yuan2020influence,koloskova2020unified}.  The bias arises from the heterogeneity among the local functions ${f_i}$, as characterized by $(1/N) \sum_{i=1}^N \|\grad f_i(x) - \grad f (x)\|^2$. This value, which is required to be bounded for the analysis of \alg{DSGD}  \cite{lian2017can,koloskova2020unified}, can become quite large when the functions are heterogeneous, thereby slowing down the convergence of \alg{DSGD} \cite{yuan2020influence,alghunaim2021unified}.  Multiple studies have introduced bias-correction algorithms resistant to local function heterogeneity,  such as the alternating direction method of multipliers (\alg{ADMM}) based methods \cite{chang2015multi,ling2015dlm}, \alg{EXTRA} \cite{shi2015extra}, \alg{Exact-Diffusion} (\alg{ED}) \cite{yuan2019exactdiffI,yuan2020influence} (also known as \alg{NIDS} \cite{li2017nids} and \alg{D$^2$} \cite{tang2018d}), and  \alg{Gradient-Tracking} (\alg{GT})  methods \cite{di2016next,xu2015augmented,nedic2017achieving,qu2017harnessing}. It has been established that these bias-correction methods outperform \alg{DSGD} \cite{alghunaim2021unified}. Yet, all these methods necessitate communication at every iteration.

	Locally updated stochastic decentralized methods have received less attention than centralized methods and are more challenging to study. Federated learning can be viewed as a subset of decentralized optimization and learning under time-varying and asynchronous updates \cite{zhao2015asynchronous}. For instance, \alg{DSGD} with local steps, termed \alg{Local-DSGD}, has been studied in \cite{li2019communication,haddadpour2019convergence,koloskova2020unified,wang2021cooperative} and is analogous to \alg{Local-SGD} when the network is fully connected. However, just as \alg{DSGD} suffers from bias, \alg{Local-DSGD} does too. Furthermore, similar to \alg{Local-SGD}, it also experiences drift. Increasing the number of local steps exacerbates this drift in the solution, requiring the use of very small stepsizes, which significantly slows down convergence.

	Only a few works have studied decentralized methods with local updates and bias correction. The work in \cite{nguyen2022performance} studied \alg{Local Gradient-Tracking} (\alg{LGT}) under nonconvex costs, but it focused solely on deterministic settings. Similarly, the research presented in \cite{liu2023decentralized} explored a locally updated stochastic variant of gradient-tracking, namely $K$-\alg{GT}, but it too addressed only nonconvex settings. In this paper, we propose and investigate a different algorithm inspired by \cite{yuan2019exactdiffI,alghunaim2019decentralized}. Our results improve upon the rates of local \alg{GT}-based methods and require just half the communication cost of existing approaches. Furthermore, our rates surpass those of \alg{Local-DSGD} \cite{koloskova2020unified,wang2021cooperative}. Importantly, when employing a constant step size, \alg{LED} achieves precise convergence in the deterministic (noiseless) case. In contrast, \alg{Local-DSGD} does not, due to the bias/drift introduced by the heterogeneity in local functions, as discussed earlier. We will next formally outline our contributions.

	\subsection{Contribution}
	\begin{itemize}
		
		\item We propose \alg{Local Exact-Diffusion} (\alg{LED}) method for distributed optimization with local updates. An advantage over previous methods is that \alg{LED} is a decentralized method that only requires one single vector communication per link and is robust to the functions heterogeneity  -- See Table \ref{table_comparison}. Numerical results are provided to demonstrate the effectiveness of \alg{LED} over other methods.

		\item 		 We provide insights and draw connections between our proposed method  and the following state-of-the-art algorithms: \alg{Exact-Diffusion} \cite{yuan2019exactdiffI}, \alg{NIDS} \cite{li2017nids},  \alg{D$^2$} \cite{tang2018d}, \alg{ProxSkip/Scaffnew} \cite{mishchenko2022proxskip}, 	\alg{VRL-SGD} \cite{liang2019variance}, and \alg{FedGATE} \cite{haddadpour2021federated}. For instance, we demonstrate that \alg{LED} can be interpreted as  \alg{Scaffnew}  \cite{mishchenko2022proxskip} with fixed local updates instead of random ones. We also show that \alg{LED}  is a decentralized variant of the centralized methods \alg{FedGATE} \cite{haddadpour2021federated} and \alg{VRL-SGD} \cite{liang2019variance}. Furthermore, we highlight that all these methods can be traced back to the primal-dual method  \alg{PDFP2O} \cite{chen2013aprimal} (also known as \alg{PAPC} \cite{drori2015simple}) in the case of a single local update.

		\item      We establish the convergence of \alg{LED} in both (strongly-)convex and nonconvex environments for online stochastic learning settings. Our rates improve upon existing bounds for local decentralized methods—see Table \ref{table_rates}.  It is worth noting that the analysis of \alg{LED} is more challenging than that of \alg{Local-DSGD}, even in the single local update scenario. Additionally, in contrast to \alg{Local-DSGD}, \alg{LED} is robust to heterogeneity in local functions and converges exactly in the deterministic case (no noise), as discussed earlier.

		A byproduct of our result is that, when adapting our analysis to centralized networks, we achieve new and tighter analyses for the methods \alg{VRL-SGD} \cite{liang2019variance} and \alg{FedGATE} \cite{haddadpour2021federated}.

	\end{itemize}

	\begin{table*}[t]  \small
		\renewcommand{\arraystretch}{1.5}
		\begin{center}
			\caption{\small Differences with existing methods employing local steps.   }
			\begin{adjustbox}{max width=1\columnwidth}
				\begin{tabular}{lccc} \toprule
					{Method}  & {Decentralized} &
				 Single  communication  & {Robust to functions heterogeneity}
				\\ \midrule
			\alg{SCAFFOLD} \cite{karimireddy2019scaffold}   
			&
		{\color{red}	$\bm{\times}$ }
			&
			{\color{red}	$\bm{\times}$ }
			&
			\checkmark 
			\\
			\alg{VRL-SGD} \cite{liang2019variance}/\alg{FedGate} \cite{haddadpour2021federated}   
			&
			{\color{red}	$\bm{\times}$ }
			&
			\checkmark 
			&
			\checkmark 
			\\
			\alg{Local-DSGD} \cite{koloskova2020unified}   
			&
			\checkmark 
			&
			\checkmark
			&
		{\color{red}	$\bm{\times}$ }
			\\
			\alg{LGT}   \cite{nguyen2022performance} and	\alg{$K$-GT}   \cite{liu2023decentralized} 
			&
			\checkmark
			&
			{\color{red}	$\bm{\times}$ }
			&
			\checkmark
			\\
	\rowcolor[gray]{0.9}	\alg{LED}	\textbf{[this work]}     
			&
			\checkmark
			&
			\checkmark
			&
			\checkmark
			\\  \bottomrule
				\end{tabular}
			\end{adjustbox}
			\label{table_comparison}
		\end{center} 
	\end{table*}

{\bf Notation.} Lowercase letters represent vectors and scalars, while uppercase letters denote matrices. The notation $\col\{a_1,\ldots,a_n\}$ (or $\col\{a_i\}_{i=1}^n$) stands for the vector that stacks the vectors (or scalars) $a_i$ on top of each other. We use $\diag\{d_1,\ldots,d_n\}$ (or $\diag\{d_i\}_{i=1}^n$) to represent a diagonal matrix with the diagonal elements $d_i$. Additionally, the symbol $\bdiag\{D_1,\ldots,D_n\}$ (or $\bdiag\{D_i\}_{i=1}^n$) denotes a block diagonal matrix with diagonal blocks $D_i$. The notations $\one$ and $\zero$ represent vectors of all ones and zeros, respectively. The dimension is determined from the context, or we use notation like $\one_n$. The inner product of two vectors $a$ and $b$ is given by $\langle a,b \rangle$. The symbol $\otimes$ indicates the Kronecker product operation. We use upright bold symbols, such as $\x,\f,\W$, to represent augmented network quantities.

{\bf Outline.} This paper is organized as follows: Section \ref{sec_led} introduces our algorithm and its motivation. In Section \ref{sec_connection}, we compare our method with other leading approaches. Section \ref{sec_results} presents our core assumptions and convergence findings, and a discussion comparing our results with prior works. Section \ref{sec-sim} provides simulation outcomes, and conclusions are drawn in Section \ref{sec_conc}. Detailed proofs are reserved for the appendix.

	\section{Local Exact-Diffusion} \label{sec_led}
In this section, we start by describing the proposed algorithm in its decentralized implementation. We then rewrite it in network notation for reasons of analysis and interpretation.
	
	\subsection{Algorithm description}
The method under study is described in \texttt{Alg.~\ref{alg:led}} and is named \alg{Local Exact-Diffusion} (\alg{LED}). In \texttt{step 1}, each node \(i\) employs \(\uptau\) local updates, starting from the initialization \(x_i^r\), which is its local estimate of the solution after the communication round \(r\). \texttt{Step 2} is the communication round during which each node \(i\) sends its local intermediate estimate \(\phi_{i,\uptau}^{r}\) to its neighbors \(j \in \cN_i\), where the symbol \(\cN_i\) denotes the set of neighbors of node \(i\) (including node \(i\)); in this step, \(w_{ij}\) is a nonnegative scalar weight that node \(i\) uses to scale the information received from node \(j \in \cN_i\). The final step, \texttt{step 3}, is where each node \(i\) updates its (dual) estimate \(y_i^r \in \real^m\).
	
	\begin{algorithm}[h] 
		\caption{\alg{Local Exact-Diffusion (LED)}}
		\textbf{node $i$ input:} $x_i^0$,   $\alpha>0$, $\beta>0$, and $\uptau$.
		
		\textbf{initialize} $y_i^0=x_i^0-\sum_{j \in \cN_i} w_{ij} x_j^{0}$ (or  $y_i^0=0$).
		
		\textbf{repeat for} $r=0,1,2,\dots$
	\begin{enumerate}\itemsep=-10pt
		\item 	 \texttt{Local primal updates:} set $\phi_{i,0}^r=x_{i}^{r}$ and do $\uptau$ local updates:  
		\begin{subequations} \label{ed_updates_dec}
			\begin{align}  \label{local_ed_per_node}
				\phi_{i,t+1}^r &=    \phi_{i,t}^r-\alpha  \grad F_i(\phi_{i,t}^r;\xi_{i,t}^r)   -  \beta  y_i^{r} , \quad t=0,\dots,\uptau-1 .
			\end{align}
			
			\item  \texttt{Diffusion:} 
			\begin{align} 
				x_i^{r+1} &= \sum_{j \in \cN_i} w_{ij} \phi_{j,\uptau}^{r}. \label{local_ed_comm_round_per_node} 
			\end{align}
		
		\item \texttt{Local dual update:} 
		\begin{align} \label{local_dual_per_node}
				y_i^{r+1} &= y_i^{r} +  \phi_{i,\uptau}^{r} -x_i^{r+1}.
		\end{align}
		
		\end{subequations}
	\end{enumerate}
		\label{alg:led} 
	\end{algorithm}

	\subsection{Networked  description}
The \alg{LED} method, as listed in \ref{alg:led}, is described at the node level. For the analysis and interpretation of the method, we will present it in a networked form. To do this, we introduce the following network weight matrix notation:
 
	\begin{align} \label{weight_matrix}
	W \define [w_{ij}] \in \real^{N \times N}, \quad 	\W \define W \otimes I_m \in \real^{mN \times mN}.
	\end{align}
Using the above notation, we have $\W \u=\col\{\sum_{j \in \cN_i} w_{ij} u_{j}\}_{i=1}^N$ for any vector $\u$ with the structure $\u=\col\{u_{1},\dots,u_{N}\}$, where $u_i \in \real^m$ \cite{sayed2014nowbook}. Thus, if we introduce the augmented network quantities:
	\begin{subequations} \label{notation:network_main}
		\begin{align}
					\x^r & \define \col\{x_1^r,\dots,x_N^r\} \in \real^{mN} \\
			\bm{\Phi}^{r}_{t} & \define \col\{\phi_{1,t}^{r},\dots,\phi_{N,t}^{r}\} \in \real^{mN} \\
			\y^r & \define \col\{y_1^r,\dots,y_N^r\} \in \real^{mN} \\
			\f(\x)& \define \sum_{i=1}^N f_i(x_i) \\
			\grad \f(\x)& \define \col\{\grad f_1(x_1),\dots,\grad F_N(x_N)\} \in \real^{mN} \\
			\grad \F(\x; \bxi)& \define \col\{\grad F_1(x_1;\xi_1),\dots,\grad F_N(x_N;\xi_N)\} \in \real^{mN}.
		\end{align}
	\end{subequations}
Then, Algorithm \ref{alg:led} can be represented in a compact networked form as follows: Given $\x^0$, set $\y^0=(\I-\W)\x^0$ (or $\y^0=\zero$) and update for $r=0,1,2,\dots$
	\begin{enumerate}
		
		\item {\em Local primal updates:}  set $\bm{\Phi}_{0}^r=\x^{r}$, for $t=0,\dots,\uptau-1$: 
		\begin{subequations}  \label{ed_updates_network}
			\begin{align}  \label{local_ed_updates}
				\bm{\Phi}^{r}_{t+1} &=    \bm{\Phi}^{r}_{t}-\alpha  \grad \F(\bm{\Phi}^{r}_{t};\bxi_t^r)   -  \beta  \y^{r} . 
			\end{align}

			\item {\em Diffusion round:}
			\begin{align} 
				\x^{r+1} &=  \W \bm{\Phi}^r_{\uptau} . \label{local_ed_comm_round}
			\end{align}

		\item {\em Local dual update:}
		\begin{align}
				\y^{r+1} &= \y^{r}+ (\I-\W) \bm{\Phi}^r_{\uptau}.
		\end{align}
		\end{subequations}
	\end{enumerate}
The networked description \eqref{ed_updates_network} will be used for analysis purposes.

	\subsection{Motivation and relation with \alg{Exact-Diffusion}}
\alg{Exact-Diffusion} (\alg{ED}) was derived in \cite{yuan2019exactdiffI} and takes the following form:
\begin{align} \label{ed_eliminated}
	\x^{r+1} &= \W \big(2 \x^r -\x^{r-1} - \alpha ( \grad \f(\x^{r})- \grad \f(\x^{r-1})) \big).
\end{align}
The unified decentralized algorithm (\alg{UDA}) from \cite{alghunaim2019decentralized} demonstrated that the iterates $\x^{r}$ of \alg{ED} in \eqref{ed_eliminated} can be equivalently described by:
\begin{subnumcases}{ \alg{UDA-ED} ~ \label{uda}}
	\bm{\Phi}^{r} = \x^{r}-\alpha \grad \f(\x^{r}) - \B^{1/2} \z^{r} \\
	\x^{r+1} = \W \bm{\Phi}^r \\
	\z^{r+1} = \z^{r}+ \B^{1/2} \bm{\Phi}^r,
\end{subnumcases}
where $\B= \I-\W$. The above form is convenient for analytical purposes; however, it cannot be implemented in a decentralized fashion due to $\B^{1/2}= (\I-\W)^{1/2}$.

To derive our method, we set $\B=\beta(\I-\W)$ in \eqref{uda} and introduce the change of variable $\y^r=\frac{1}{\beta} \B^{1/2} \z^{r}$. This leads to the following description:
\begin{subnumcases}{\alg{LED-1} ~\label{uda_ed}}
	\bm{\Phi}^{r} = \x^{r}-\alpha \grad \f(\x^{r}) - \beta \y^{r} \\
	\x^{r+1} = \W \bm{\Phi}^r \\
	\y^{r+1} = \y^{r}+ (\I-\W) \bm{\Phi}^r.
\end{subnumcases}
It can be observed that the update \alg{LED-1} \eqref{uda_ed} is equivalent to \alg{LED} \eqref{ed_updates_network} when $\uptau=1$. In other words, \alg{LED} \eqref{ed_updates_network} is an extension of \alg{LED-1} \eqref{uda_ed} that incorporates local updates.

\begin{remark}[\sc  NIDS, and D$^2$] \rm
	The \alg{NIDS} method from \cite{li2017nids} is given by
	\begin{align*}
		\x^{r+1} = \widetilde{\W} \left(2 \x^{r}-\x^{r-1}-\alpha \left(\nabla \f(\x^{r}) - \nabla \f(\x^{r-1}) \right) \right),
	\end{align*}
	where $\widetilde{\W} = (1-\alpha c) \I + \alpha c \W$ and $c$ is a stepsize parameter. When $c=1/\alpha$, \alg{NIDS} reduces to \alg{ED} \eqref{ed_eliminated}. We also note that \alg{ED} (or \alg{NIDS} with $c=1/\alpha$) has been studied under the name \alg{D$^2$} \cite{tang2018d}. Thus, \alg{LED} can be viewed as a modification of \alg{NIDS/D$^2$} that incorporates local updates.
	\qd
\end{remark}
\begin{remark}[\sc Non-Equivalence with Local Updates] \rm
	It is important to note that the updates \eqref{ed_eliminated}, \eqref{uda}, and \eqref{uda_ed} are equivalent only when there are no multiple local updates and $\beta=1$. To understand this, observe that the local updates variants can be modeled as a time-varying graph $\W_r$, where $\W_r=\W$ when $r=\uptau,2 \uptau, 3 \uptau, \dots$, and $\W_k=\I$ otherwise. In this scenario, the updates $\x^r$ differ for all these methods. Indeed, for this case, the updates \eqref{ed_eliminated} and \eqref{uda} are not guaranteed to converge and often diverge in simulations.
	\qd
\end{remark}

\section{Connection with existing algorithms}	\label{sec_connection}
In this section, we discuss and highlight the connections of \alg{LED} to the following algorithms: \alg{Scaffnew/ProxSkip} \cite{mishchenko2022proxskip}, \alg{VRL-SGD} \cite{liang2019variance}, and \alg{FedGate} \cite{haddadpour2021federated}. We also demonstrate that all these methods can be traced back to the primal-dual method \alg{PDFP2O} \cite{chen2013aprimal}, which is also known as \alg{PAPC} \cite{drori2015simple} and was initially proposed in \cite{loris2011generalization} for quadratic objectives.

\subsection{Relation with \alg{PDFP2O}/\alg{PAPC}}
We now  demonstrate that \alg{LED} \eqref{ed_updates_network} with $\uptau=1$ (\ie, \alg{LED-$1$} \eqref{uda_ed})  can be interpreted as the primal-dual algorithm  \alg{PDFP2O} \cite{chen2013aprimal}  applied to the following reformulation of problem \eqref{main_prob}:
\begin{align} \label{reformulation}
	\min_{\x} \quad \f(\x) + \g(\B^{1 \over 2} \x),
	\end{align}
where $\B =\I-\W$ and $\g$ is the indicator function of zero, \ie, $\g(\u)=0$ if $\u=\zero$ and $g(\u)= + \infty$ otherwise. Problem \eqref{reformulation} is equivalent to \eqref{main_prob} because $\B \x=\zero$ if and only if $x_1=x_2=\dots=x_N$ -- see \cite{shi2015extra,alghunaim2019linear}.

The following updates are obtained when \alg{PDFP2O} \cite{chen2013aprimal} is applied to formulation \eqref{reformulation}:
\begin{subnumcases}{\alg{PDFP2O}~ \label{papc}} 
\v^{r+1}= \prox_{\frac{\alpha}{\eta} \g^*} \left(\B^{1 \over 2} (\x^{r}-\alpha \nabla \f(\x^{r}) )+\left(\I-\eta \B \right)\v^{r}\right) \\
\x^{r+1}=\x^{r}-\alpha \nabla \f(\x^{r}) -\eta \B^{1 \over 2} \v^{r+1},
\end{subnumcases} 
where $\prox_{\frac{\alpha}{\eta} \g^*}(\cdot)$ denotes the proximal operator of the conjugate of $g$ and $\alpha,\eta >0$ are stepsize parameters.   The following result relates \alg{LED-1} \eqref{uda_ed}  (\alg{LED} \eqref{ed_updates_network} with $\uptau=1$) with \alg{PDFP2O} \eqref{papc}. 
\begin{proposition}[\sc Relation to \alg{PDFP2O}] \label{pro}
The updates of \alg{PDFP2O} \eqref{papc} can be rewritten as
\begin{subequations} \label{nids}
	\begin{align}
		\bm{\Phi}^{r} &= \mathbf{x}^{r}-\alpha \grad \mathbf{f}(\mathbf{x}^{r}) - \eta \y^{r} \label{nids_a} \\
		\mathbf{y}^{r+1} &= \mathbf{y}^{r}+ (\I-\W) \bm{\Phi}^{r} \label{nids_b} \\
		\mathbf{x}^{r+1}&= \left( (1-\eta)\I+ \eta \W \right) \bm{\Phi}^{r}.  \label{nids_c}
	\end{align}
\end{subequations}
It follows that  \alg{PDFP2O} \eqref{nids} is  equivalent to \alg{LED-1} \eqref{uda_ed}  when $\eta=\beta=1$.
\end{proposition}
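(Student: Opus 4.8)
The plan is to strip the fractional matrix $\B^{1/2}=(\I-\W)^{1/2}$ out of \eqref{papc} via a single change of variable, after first simplifying the proximal step. The crucial preliminary observation is that, since $\g$ is the indicator of $\{\zero\}$, its conjugate $\g^*$ is identically zero on $\real^{mN}$; consequently $\prox_{\frac{\alpha}{\eta}\g^*}$ is the identity map, so the first line of \eqref{papc} collapses to
\begin{align*}
\v^{r+1} = \B^{1/2}\big(\x^{r}-\alpha\grad\f(\x^{r})\big) + (\I-\eta\B)\v^{r}.
\end{align*}
I would then introduce the dual variable $\y^{r}\define\B^{1/2}\v^{r}$ together with the auxiliary quantity $\bm{\Phi}^{r}\define\x^{r}-\alpha\grad\f(\x^{r})-\eta\y^{r}$, so that \eqref{nids_a} holds by definition (matching initializations through $\y^0=\B^{1/2}\v^0$).

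Next I would derive \eqref{nids_b} by left-multiplying the simplified $\v$-update by $\B^{1/2}$. Using that $\B$ and $\B^{1/2}$ commute, one obtains $\y^{r+1}=\B^{1/2}\v^{r+1}=\B\big(\x^{r}-\alpha\grad\f(\x^{r})\big)+(\I-\eta\B)\y^{r}$, and regrouping the right-hand side as $\y^{r}+\B\big(\x^{r}-\alpha\grad\f(\x^{r})-\eta\y^{r}\big)=\y^{r}+\B\bm{\Phi}^{r}$ yields \eqref{nids_b}, since $\B=\I-\W$. The point worth checking here is that this recursion closes in the variables $(\x^{r},\y^{r})$ alone, with no residual dependence on $\v^{r}$ surviving, which is precisely what turns the unimplementable $\B^{1/2}$-scheme into a decentralized one.

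For \eqref{nids_c} I would rewrite the second line of \eqref{papc} as $\x^{r+1}=\x^{r}-\alpha\grad\f(\x^{r})-\eta\B^{1/2}\v^{r+1}=\x^{r}-\alpha\grad\f(\x^{r})-\eta\y^{r+1}$. Substituting $\x^{r}-\alpha\grad\f(\x^{r})=\bm{\Phi}^{r}+\eta\y^{r}$ from the definition of $\bm{\Phi}^{r}$ and using $\y^{r+1}-\y^{r}=\B\bm{\Phi}^{r}$ from \eqref{nids_b} gives $\x^{r+1}=\bm{\Phi}^{r}-\eta\B\bm{\Phi}^{r}=\big(\I-\eta(\I-\W)\big)\bm{\Phi}^{r}=\big((1-\eta)\I+\eta\W\big)\bm{\Phi}^{r}$, which is exactly \eqref{nids_c}.

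Finally, setting $\eta=\beta=1$ collapses the three equations directly onto \alg{LED-1}: \eqref{nids_a} becomes $\bm{\Phi}^{r}=\x^{r}-\alpha\grad\f(\x^{r})-\y^{r}$, \eqref{nids_c} becomes $\x^{r+1}=\W\bm{\Phi}^{r}$, and \eqref{nids_b} is left unchanged, matching \eqref{uda_ed} line for line. I expect the only genuine obstacle to be the bookkeeping around the change of variable, in particular verifying the commutativity step and confirming that the $\y$-recursion is self-contained; the remaining manipulations are routine algebra.
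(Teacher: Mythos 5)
Your proposal is correct and follows essentially the same route as the paper's proof: simplify the proximal step using the fact that the conjugate of the indicator of zero vanishes identically, introduce $\bm{\Phi}^r$ and the change of variable $\y^r=\B^{1/2}\v^r$, and verify the three updates by direct algebra (the paper merely orders these steps slightly differently, introducing $\y^r$ at the end rather than up front). Your explicit justification that $\g^*\equiv 0$ implies the prox is the identity, and your note on the commutativity of $\B^{1/2}$ with $\I-\eta\B$, are fine refinements of details the paper states without comment.
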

\begin{proof}
If we let $	\bm{\Phi}^{r} = \mathbf{x}^{r}-\alpha \grad \mathbf{f}(\mathbf{x}^{r}) - \eta \mathbf{B}^{1 \over 2} \mathbf{v}^{r}$, then we can rewrite equation \eqref{papc} as follows:
\begin{subequations}  \label{papc2}
	\begin{align}
		\bm{\Phi}^{r} &= \mathbf{x}^{r}-\alpha \grad \mathbf{f}(\mathbf{x}^{r}) - \eta \mathbf{B}^{1 \over 2} \mathbf{v}^{r}  \\
		\mathbf{v}^{r+1} &= \prox_{\frac{\alpha}{\eta} \g^*} \left(\mathbf{v}^{r}+ \mathbf{B}^{1 \over 2} \bm{\Phi}^{r} \right) \\
		\mathbf{x}^{r+1}&=\mathbf{x}^{r}-\alpha \grad \mathbf{f}(\mathbf{x}^{r}) - \eta \mathbf{B}^{1 \over 2}\mathbf{v}^{r+1}. 
	\end{align}
\end{subequations}
Since $\g$ is the indicator function of zero, we have $\prox_{\frac{\alpha}{\eta} \g^*}(\z)=\z$; thus $\mathbf{v}^{r+1} = \mathbf{v}^{r}+ \mathbf{B}^{1 \over 2} \bm{\Phi}^{r}$. Moreover, observe that 
\begin{align}
	\mathbf{x}^{r+1}&=\mathbf{x}^{r}-\alpha \grad \mathbf{f}(\mathbf{x}^{r}) - \eta \mathbf{B}^{1 \over 2}\mathbf{v}^{r+1} \nonumber \\
	&=\mathbf{x}^{r}-\alpha \grad \mathbf{f}(\mathbf{x}^{r}) - \eta \mathbf{B}^{1 \over 2} \mathbf{v}^{r}  - \eta \mathbf{B}^{1 \over 2}(\mathbf{v}^{r+1} -  \mathbf{v}^{r} ) \nonumber \\
	&= \bm{\Phi}^{r} - \eta \B \bm{\Phi}^{r}= (\I-\eta \B) \bm{\Phi}^{r} \nonumber .
\end{align}
Therefore, \eqref{papc2} can be represented as:
\begin{subequations} 
	\begin{align*}
		\bm{\Phi}^{r} &= \mathbf{x}^{r}-\alpha \grad \mathbf{f}(\mathbf{x}^{r}) - \eta \mathbf{B}^{1 \over 2} \mathbf{v}^{r} \\
		\mathbf{v}^{r+1} &= \mathbf{v}^{r}+ \mathbf{B}^{1 \over 2} \bm{\Phi}^{r} \\
		\mathbf{x}^{r+1}&= (\I-\eta \B) \bm{\Phi}^{r}. 
	\end{align*}
\end{subequations}
Introducing $\y^{r}=\mathbf{B}^{1 \over 2} \mathbf{v}^{r}$ and using $\B=\I-\W$, the above updates can be rewritten as given in \eqref{nids}. Recall that when $\uptau=1$, we can remove the subscript $t$ from  $\bm{\Phi}^{r}_{t}$ and describe the updates \eqref{ed_updates_network}  as given in \eqref{uda_ed}.
When $\eta=\beta=1$,  the updates \eqref{nids} and \eqref{uda_ed} are identical. 
\end{proof}

\begin{remark}[\sc ED, NIDS, and  D$^2$ interpretations] \rm The above result demonstrates that \alg{LED} \eqref{ed_updates_network} can be interpreted as a locally updated variant of \alg{PDFP2O} \eqref{papc}. It also shows that 	\alg{ED}/\alg{D$^2$} \cite{yuan2019exactdiffI,tang2018d} and \alg{NIDS} \cite{li2017nids}  are  different representations of \alg{PDFP2O} \cite{chen2013aprimal} applied on formulation \eqref{reformulation}. \qd	
\end{remark}

\subsection{Relation with \alg{Scaffnew}}
The work \cite{mishchenko2022proxskip} studied a proximal skipping variant of  \alg{PDFP2O}. The  decentralized \alg{Scaffnew} method  is given by \cite[Alg.~5]{mishchenko2022proxskip}:
\begin{subequations} \label{scaffnew}
	\begin{align} \label{scaffnew_local}
			\bm{\Phi}^{r}=\x^{r}-\alpha \left(\nabla \f\left(\x^{r}\right)+  \z^r\right).
	\end{align}
Generate a random number $\theta_t \in\{0,1\}$  with  $\operatorname{Prob}\left(\theta_t=1\right)=p$ and update:
\begin{align} \label{scaffnew_rare_communication}
\begin{cases}
  \x^{r+1}=	(1-\frac{\alpha \zeta}{p}) \bm{\Phi}^{r} + \frac{\alpha \zeta}{p} \W \bm{\Phi}^{r},  \\
  \z^{r+1}=\z^r + \frac{p}{\alpha} (\bm{\Phi}^{r}- \x^{r+1}) =   \z^r + \zeta (\I-\W) \bm{\Phi}^{r}  & \text { if } \theta_t=1 \\
	  \x^{r+1}=	\bm{\Phi}^{r}, \\
	 \z^{r+1}= \z^r  & \text{otherwise},
\end{cases}
\end{align}
\end{subequations}
where $\alpha,\zeta$ are stepsize parameters.  Observe that \eqref{scaffnew} employs local updates \eqref{scaffnew_local} and communicates only with a small probability $p$  \eqref{scaffnew_rare_communication}. If we let $\y^r=(1/\zeta) \z^r$ and $p=1$ (communicate at each iteration) then \eqref{scaffnew}  reduces to
\begin{subequations} \label{scaffnew_p1}
	\begin{align} 
		\bm{\Phi}^{r}&=\x^{r}-\alpha \nabla \f\left(\x^{r}\right)-  \alpha \zeta \y^r \label{scaffnew_p1a} \\
		\x^{r+1} &=	 \left( (1-\alpha \zeta) \I +  \alpha \zeta \W \right) \bm{\Phi}^{r}, \label{scaffnew_p1b}  \\
		\y^{r+1} &= \y^r +  (\I-\W) \bm{\Phi}^{r}. \label{scaffnew_p1c}
	\end{align}
\end{subequations}
The update \eqref{scaffnew_p1} is the same as \alg{PDFP2O} \eqref{nids} when $\eta=\alpha \zeta$. Consequently, when $p=1$ and $\zeta=1/\alpha$  \eqref{scaffnew_p1} is exactly  \alg{LED-1} \eqref{uda_ed} when  $\beta=1$.

\begin{remark} \rm
	 \alg{LED} \eqref{ed_updates_network} employs a fixed number $\uptau$ of local updates between two communication rounds, whereas \alg{Scaffnew} uses {\em random} number of local updates between two communication rounds.   The use of random communication skipping or fixed local steps differs in analysis, however, in terms of performance they are strikingly similar with $1/p$ playing the role of $\uptau$.

	 The work \cite{mishchenko2022proxskip} analyzes \alg{Scaffnew} and shows that local steps can save communication when the network is well connected. We point out that the analysis techniques in \cite{mishchenko2022proxskip} do not show linear speedup and are only-suited for the probabilistic  implementation with strongly-convex costs.	The techniques we present in this work are distinct and applicable to the locally updated variant with a deterministic number of local updates \eqref{ed_updates_network} for both nonconvex and (strongly-)convex settings.  \qd
\end{remark}

\subsection{Relation with \alg{FedGATE}/\alg{VRL-SGD}}
The work \cite{haddadpour2021federated} introduced and analyzed a federated learning  algorithm (centralized method) named \alg{FedCOMGATE} that employs compression; without compression the method reduces to \alg{FedGATE} \cite[Alg.~3]{haddadpour2021federated}, which is a generalization of \alg{VRL-SGD} \cite{liang2019variance}. We will now show the relationship between  \alg{FedGATE}/\alg{VRL-SGD} and the centralized version of \alg{LED}. As a first step, we will represent \alg{FedGATE}/\alg{VRL-SGD} in a networked form.

 \alg{FedGATE} is described as follows \cite[Alg.~3]{haddadpour2021federated}:
For $r=0,1,2,\dots$,  set $\phi_{i,0}^r=x^{r}$, for $t=0,\dots,\uptau-1$: 
\begin{subequations}
	\begin{align}  
		\phi_{i,t+1}^r &=    \phi_{i,t}^r-\alpha  ( \grad F_i(\phi_{i,t}^r;\xi_{i,t}^r)   -    \delta_i^{r} ) , \quad t=0,\dots,\uptau-1 .
	\end{align}
	Update
	\begin{align}  
		\delta_i^{r+1} &= \delta_i^{r} - \frac{1}{\alpha \uptau} \left(  \phi_{i,\uptau}^{r} - \frac{1}{N} \sum_{j =1}^N  \phi_{j,\uptau}^{r}  \right) \\
			x^{r+1} &= x^{r} - \alpha \gamma \big(x^{r} - \frac{1}{N} \sum_{j =1}^N  \phi_{j,\uptau}^{r} \big),
	\end{align}
\end{subequations}
where $\gamma$ is a global stepsize parameter.  By letting $y_i^r=-\alpha \uptau \delta_i^r$ and employing the network notation defined in \eqref{notation:network_main} with $\x^r=\one \otimes x^r$, the method above can be rewritten as:
	\begin{subequations}  \label{fedgate_network}
	\begin{align} 
		\bm{\Phi}^{r}_{t+1} &=    \bm{\Phi}^{r}_{t}-\alpha  \grad \F(\bm{\Phi}^{r}_{t};\bxi_t^r)   -  \frac{1}{\uptau}  \y^{r},  \quad t=0,\dots,\uptau-1 .  
	\end{align}
	Then update
	\begin{align} 
		\x^{r+1} &= (1-\alpha \gamma) \x^{r} + \alpha \gamma (\tfrac{1}{N} \one \one\tran) \bm{\Phi}^r_{\uptau} \\
		\y^{r+1} &= \y^{r}+ ( \I -  \tfrac{1}{N} \one \one\tran  ) \bm{\Phi}^r_{\uptau}.
	\end{align}
\end{subequations}
It's now evident that when $\alpha \gamma=1$, the update \eqref{fedgate_network} aligns with \alg{LED} \eqref{ed_updates_network} when $\W=\tfrac{1}{N} \one \one\tran$ and $\beta=1/\uptau$. It's worth noting that the updates \eqref{fedgate_network} simplify to \alg{VRL-SGD} \cite{liang2019variance} when $\alpha \gamma=1$ \cite{haddadpour2021federated}. In essence, \alg{FedGATE} with $\alpha \gamma=1$ (or \alg{VRL-SGD}) corresponds to \alg{LED} in the fully connected network scenario. This also suggests that \alg{FedGATE} and \alg{VRL-SGD} are locally updated versions of \alg{PDFP2O} \eqref{papc} with $\B=\I-\tfrac{1}{N} \one \one\tran$.

\begin{remark}[\sc Equivalence] \rm
	All of the derivations in this section require appropriate stepsizes tuning and are based on the assumption that the graph is {\em static} (\ie, $W$ is constant).  When the stepsizes differ or the graph is dynamic (as in the local update variant), these various representation may not necessarily be equivalent.	
	We note that the analysis techniques from \cite{yuan2019exactdiffI,alghunaim2019decentralized} are particularly suited for static graphs and are limited to the strongly-convex case. Moreover, the techniques from \cite{liang2019variance,haddadpour2021federated} are tailored for centralized networks. In contrast, our analyses address the more challenging decentralized connections with local updates; thus, our techniques can be specialized for these methods. In fact, our analysis can provide tighter rates compared to those in \cite{liang2019variance,haddadpour2021federated}. Remark \ref{remark:centralized_case} explains how to adapt our techniques to the centralized scenario.	  	
\end{remark}
	
	\section{Convergence result} \label{sec_results}
In this section, we present our main convergence findings and discuss how they differ from previous results. Before proceeding, we will review the assumptions necessary for our results to hold, which are standard in the literature \cite{alghunaim2021unified,liang2019variance,haddadpour2021federated}.
	
	\subsection{Assumptions}
	\begin{assumption}[\sc \small Weight matrix] \label{assump:network} 
		The weight matrix $W$ is symmetric,  doubly stochastic, and primitive. Moreover, we assume that $W$ is positive definite. \qd
	\end{assumption}
	\noindent   Under Assumption \ref{assump:network}, the eigenvalues of   $W$, denoted by $\{\lambda_i\}_{i=1}^N$, are all strictly less than one (in magnitude for nonpositive definite $W$),  with the exception of a single eigenvalue at one, which we denote by $\lambda_1$. The network's mixing rate is defined as:
	\begin{align} \label{mixing_rate}
		\lambda \define  \left\|W-\tfrac{1}{N} \one \one\tran \right\| =\max_{i \in \{2,\ldots,N\}} |\lambda_i| <1.
	\end{align}
We remark that the positive definiteness assumption can be easily satisfied because, given a symmetric doubly stochastic matrix $\tilde{W}$, we can construct a positive definite weight matrix by $W=0.5(\tilde{W}+I)$.

	\begin{assumption}[\sc \small Bounded variance] \label{assump:noise} 
		Each stochastic gradient $\grad F_i(x_i^k;\xi_i^k)$ is  unbiased  with bounded variance:
		\begin{subequations} \label{noise_bound_eq}
			\begin{align}
				\textstyle \Ex_k \big[\grad F_i(x_i^k;\xi_i^k)-\grad f_i(x_i^k)\big] &=0, \label{noise_bound_eq_mean} \\
				\textstyle \Ex_k \|\grad F_i(x_i^k;\xi_i^k)-\grad f_i(x_i^k)\|^2  &\leq \sigma^2, \label{noise_bound_eq_variance}
			\end{align}
		\end{subequations}
		for some $\sigma^2 \geq 0$ where   $\Ex_{k}$ denotes the expectation conditioned on the all iterates up to iteration $k$,  $\{x_i^0,x_i^1,\dots,x_i^k\}$, for all $i$. Moreover, we assume that the random data  $\{\xi_{i,t}^k\}$ are independent from each other for all $\{i\}_{i=1}^N$ and $\{t \}$.   \qd
	\end{assumption}

	\begin{assumption}[\sc \small Objective function] \label{assump:smoothness} Each function $f_i: \real^m \rightarrow \real$ is $L$-smooth:
		\begin{align} \label{smooth_f_eq}
			\|\grad f_i(y) -\grad f_i(z)\| \leq L \|y -z\|, \quad \forall~y,z \in \real^m, 
		\end{align}
		for some $L>0$. Additionally, the aggregate function $f(x)=\frac{1}{N} \sum_{i=1}^N f_i(x)$ is bounded below, \ie, $f(x) \geq  f^\star > -\infty$ for every $x \in \real^m$, where $f^\star$ denotes the optimal value of $f$.  \qd
		\end{assumption}
		\noindent 	Under the aforementioned assumption, the aggregate function $f(x)=\frac{1}{N} \sum_{i=1}^N f_i(x)$ is also $L$-smooth.

	 Assumptions \ref{assump:network}--\ref{assump:smoothness} are sufficient to establish convergence under nonconvex settings. We will also study convergence under additional convexity assumption given below.    
	
	\begin{assumption}[\sc \small Convexity] \label{assump:cvx} Each function $f_i: \real^m \rightarrow \real$ is ($\mu$-strongly)convex
		for some $0 \leq \mu \leq L $. (When $\mu=0$, then the functions are simply convex.)  \qd
	\end{assumption}

	\subsection{Main results}
We are now ready to present our main findings. The convergence results for nonconvex and convex functions are presented in Theorems \ref{thm:noncvx} and \ref{thm:cvx}, respectively. The final convergence rates derived from these theorems are given in Corollaries \ref{coro:ratesdet} and \ref{coro:rates}. All proofs can be found in the appendices.

	\begin{theorem}[\sc \small Nonconvex convergence]  \label{thm:noncvx}
		Under Assumptions \ref{assump:network}--\ref{assump:smoothness}, and for sufficiently small constant stepsizes $\alpha$ and $\beta = 1/\uptau$, it holds that
	\begin{align} \label{eq:thm1:noncvx}
		\frac{1}{R}	 \sum_{r=0}^{R-1} \cE_r 		 
&\leq \cO \underbrace{\left( \frac{    f(\bar{x}^{0})-f^\star}{\alpha \uptau R}	  
	+ \frac{ \alpha^2 \uptau^2 L^2    \varsigma_0^2 }{ (1-\lambda)^2  R} \right)}_{\text{Deterministic part}} 
+  \cO \underbrace{\left( \frac{  \alpha  L \sigma^2 }{ N} 
	+ \frac{ \alpha^2 \uptau  L^2  \sigma^2}{1-\lambda}
	\right)}_{\text{Stochastic part}},
	\end{align}
	where  $\cE_r \define  \textstyle   \Ex  \| \grad f(\bar{x}^{r})  \|^2  +  \frac{1}{\uptau} \sum_{t=0}^{\uptau-1} \|  \tfrac{1}{N}  \sum_{i=1}^N \nabla f_i(\phi^r_{i,t})\|^2$ and  $\varsigma_0^2 \define \frac{1}{N} \sum_{i=1}^N \|\grad f_i(\bar{x}^0) -  \grad f(\bar{x}^0) \|^2$.
 \qd
	\end{theorem}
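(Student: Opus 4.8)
The plan is to track the network average $\bar{x}^r\define\frac1N\sum_{i=1}^N x_i^r$ together with a disagreement error and the dual variable, and then to couple a smoothness-based descent inequality with a contracting recursion for the disagreement. First I would record the average dynamics. Since $W$ is doubly stochastic, $\one\tran(\I-\W)=\zero\tran$, so the dual average is invariant and, under the stated initialization, $\frac1N\one\tran\y^r=0$ for all $r$. Summing the local primal updates \eqref{local_ed_updates} over $t=0,\dots,\uptau-1$ and using $\beta=1/\uptau$ gives $\bm{\Phi}^r_{\uptau}=\x^r-\alpha\sum_{t}\grad\F(\bm{\Phi}^r_t;\bxi^r_t)-\y^r$, and averaging yields $\bar{x}^{r+1}=\bar{x}^r-\alpha\sum_{t=0}^{\uptau-1}\frac1N\sum_i\grad F_i(\phi^r_{i,t};\xi^r_{i,t})$. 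Thus the average executes $\uptau$ accumulated stochastic-gradient steps per round, evaluated at the drifted local iterates $\phi^r_{i,t}$.

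Second, I would apply the $L$-smoothness descent lemma to $f$ along $\bar{x}^r$. Taking the conditional expectation and using unbiasedness, the inner-product term splits through $\langle a,b\rangle=\tfrac12(\|a\|^2+\|b\|^2-\|a-b\|^2)$ so that its negative part equals exactly $-\tfrac{\alpha\uptau}{2}\,\cE_r$ (this is precisely why $\cE_r$, with the true-gradient averages $\frac1N\sum_i\grad f_i(\phi^r_{i,t})$, is the correct progress measure), while the residual $\tfrac{\alpha}{2}\sum_t\|\grad f(\bar{x}^r)-\frac1N\sum_i\grad f_i(\phi^r_{i,t})\|^2\le\tfrac{\alpha L^2}{2N}\sum_{t,i}\|\phi^r_{i,t}-\bar{x}^r\|^2$ becomes a consensus-plus-drift error. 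The quadratic term $\tfrac{L\alpha^2}{2}\Ex\|\sum_t\frac1N\sum_i\grad F_i(\phi^r_{i,t};\xi^r_{i,t})\|^2$ is handled by splitting into mean and noise; independence of $\{\xi^r_{i,t}\}$ across $i$ and $t$ eliminates the cross terms and yields the linear-speedup variance $\tfrac{\alpha L\sigma^2}{N}$, while the mean part is reabsorbed into $\cE_r$ once $\alpha$ is small.

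Third, I would bound $\frac1N\sum_{t,i}\Ex\|\phi^r_{i,t}-\bar{x}^r\|^2\le\frac2N\sum_{t,i}\Ex\|\phi^r_{i,t}-x_i^r\|^2+\frac{2\uptau}{N}\Ex\|\x^r-\one\otimes\bar{x}^r\|^2$. For the local drift, unrolling \eqref{local_ed_updates} within a round and using $\alpha L$ small gives a geometric-sum (Gronwall-type) estimate of $\Ex\|\phi^r_{i,t}-x_i^r\|^2$ in terms of $\alpha^2$ times local gradient magnitudes and $\sigma^2$, plus the contribution $\|\beta y_i^r\|^2=\|y_i^r\|^2/\uptau^2$ from the dual correction. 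For the disagreement and dual terms I would project onto the consensus-orthogonal subspace via $\P\define\I-\tfrac1N(\one\one\tran)\otimes I_m$ (noting $\P\y^r=\y^r$) and analyze the coupled linear recursion for $(\P\x^r,\y^r)$ read off from $\x^{r+1}=\W\bm{\Phi}^r_{\uptau}$ and the dual update in \eqref{ed_updates_network}. Assumption \ref{assump:network} (positive definite $W$ with gap $1-\lambda$) guarantees this map contracts at a rate governed by $\lambda$; its forcing terms are the stochastic gradients and the initial transient, which is where $\varsigma_0^2/(1-\lambda)^2$ enters.

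Finally, I would substitute the drift and disagreement bounds into the descent inequality, telescope over $r=0,\dots,R-1$, and impose the smallness conditions on $\alpha$ (with $\beta=1/\uptau$ fixed) so that every positive error term not of the target form is absorbed either by $-\tfrac{\alpha\uptau}{2}\sum_r\cE_r$ on the left or by the contraction on the right. Dividing by $\alpha\uptau R/2$ then produces the four advertised terms. I expect the third step to be the main obstacle: the local drift depends on $\y^r$, while the dual recursion is driven by gradients evaluated at the drifted iterates, so the disagreement error, the dual error, and the drift are mutually coupled. Breaking this circular dependence requires a single suitably weighted Lyapunov function for $(\P\x^r,\y^r)$ and a sufficiently small $\alpha$, and it is also where the precise $(1-\lambda)$ and $(1-\lambda)^2$ dependencies must be tracked with care.
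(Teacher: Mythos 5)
Your architecture (average-descent lemma, drift bound, coupled disagreement recursion, telescoping) matches the paper's proof, but there is a genuine gap at exactly the point you flag as ``the main obstacle,'' plus an incorrect assertion feeding into it. The claim that Assumption \ref{assump:network} ``guarantees this map contracts at a rate governed by $\lambda$'' is false in the Euclidean norm: the coupled linear map for the disagreement/dual pair is the block matrix $\D$ of \eqref{matrix_cons}, which is non-symmetric; its eigenvalues are $\lambda_i\pm\jmath\sqrt{\lambda_i-\lambda_i^2}$ (modulus $\sqrt{\lambda_i}<1$), but $\|\D\|$ is not guaranteed to be below $1$, so no direct norm-contraction argument applies. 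The paper's Lemma \ref{lemma:transf_avg_hat} resolves this by explicitly diagonalizing each $2\times 2$ block over the complex numbers (this is where positive definiteness of $W$ is actually used, to keep the eigenvalues complex with modulus $\sqrt{\lambda_i}$) and running the recursion in the transformed variable $\widehat{\d}^r=\widehat{\V}^{-1}[\widehat{\Q}\tran\x^r;\widehat{\Q}\tran\z^r]$, which does contract with factor $\delta=\sqrt{\lambda}$. Naming ``a suitably weighted Lyapunov function'' is not enough; constructing that transformation is the core technical content of the proof, and without it your third step does not go through.

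Second, and more damaging to the stated bound: you run the coupled recursion in the variables $(\P\x^r,\y^r)$ and bound the local drift through $\|\beta y_i^r\|^2$, with forcing given by ``the stochastic gradients.'' The projection of the raw gradients onto the disagreement subspace contains the persistent heterogeneity $\grad f_i-\grad f$, and $\y^r$ itself does not vanish at consensus/stationarity (its fixed point is proportional to that heterogeneity). With this bookkeeping the disagreement and drift errors settle at $\cO(\alpha\uptau\varsigma/(1-\lambda))$ and the final rate acquires a heterogeneity bias that does not decay with $R$ --- i.e., you would prove a \alg{Local-DSGD}-type bound rather than \eqref{eq:thm1:noncvx}, where heterogeneity enters only through $\varsigma_0^2/R$. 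The missing idea is the centered dual variable $\z^r\define\y^r+\tfrac{\alpha}{\beta}\grad\f(\bar{\x}^r)$ of \eqref{z_def}: in the $(\widehat{\Q}\tran\x^r,\widehat{\Q}\tran\z^r)$ coordinates the forcing consists only of the differences $\grad\f(\bm{\Phi}^r_t)-\grad\f(\bar{\x}^r)$ and the increments $\grad\f(\bar{\x}^{r+1})-\grad\f(\bar{\x}^r)$, both controlled by $L$ times deviations you are already tracking, so heterogeneity survives only in the initialization $\z^0$ (hence $\varsigma_0^2$). The same centering is what makes the drift lemma work: the paper rewrites $\alpha\grad f_i(\phi^r_{i,t})+\beta y_i^r=\alpha\big(\grad f_i(\phi^r_{i,t})-\grad f_i(\bar{x}^r)\big)+\beta z_i^r$ before bounding, precisely to avoid the non-vanishing $\|y_i^r\|^2$ term that your estimate keeps.
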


	\begin{theorem}[\sc \small Convex convergence]  \label{thm:cvx}
			Under Assumptions \ref{assump:network}--\ref{assump:cvx}, and  for sufficiently small constant stepsizes $\alpha$ and $\beta = 1/\uptau$, it holds that for $\mu=0$ (convex case)
			\begin{align}  \label{eq:thm2:cvx}
		\frac{1}{R}	 \sum_{r=0}^{R-1} \cE_r
	&\leq  \cO \underbrace{\left( \frac{    \|\bar{x}^{0}-x^\star \|^2}{\alpha \uptau R}	  
		+ \frac{ \alpha^2 \uptau^2 L^2    \varsigma_0^2 }{ (1-\lambda)^2 R} \right)}_{\text{Deterministic part}}  
	+ 	  \cO \underbrace{\left(  \frac{  \alpha   \sigma^2 }{ N}
		+\frac{ \alpha^2 \uptau  L  \sigma^2}{1-\lambda}  \right)}_{\text{Stochastic part}},
		\end{align}
	where    $\cE_r \define  \Ex [f(\bar{x}^r) - f(x^\star)]$,  $\bar{x}^{0} \define (1/N) \sum_{i=1}^N x_i^0$, and  $\varsigma_0^2 \define \frac{1}{N} \sum_{i=1}^N \|\grad f_i(\bar{x}^0) -  \grad f(\bar{x}^0) \|^2$. Moreover, if $\mu>0$ (strongly-convex case) then
				\begin{align}   \label{eq:thm2:strongcvx}
		\Ex  \|\bar{x}^{r}-x^\star\|^2   
& \leq  \underbrace{\left(1-\tfrac{\alpha \uptau \mu}{4} \right)^r a_0}_{\text{Deterministic part}} 
+ \cO \underbrace{\left(   \frac{ \alpha \sigma^2 }{\mu N} + 
	\frac{     \alpha^2 \uptau L  \sigma^2  }{  \mu (1-\lambda)}\right)}_{\text{Stochastic part}},
		\end{align}
	where $\rho \define 1-\lambda$ and  $a_0$ is a constant that depends on the initialization.
		\qd
	\end{theorem}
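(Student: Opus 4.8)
The plan is to decouple the dynamics into a \emph{mean} component and a \emph{disagreement} component, analyze each, and recombine them through a single Lyapunov function. First I would exploit Assumption~\ref{assump:network}: since $W$ is doubly stochastic, $\W$ fixes the consensus subspace $\range(\one\otimes I_m)$, and the dual update in \eqref{ed_updates_network} keeps $\y^r$ mean-free (by induction from $\y^0=(\I-\W)\x^0$ and $(\one\tran\otimes I_m)(\I-\W)=\zero$). Averaging the local primal updates with $(1/N)(\one\tran\otimes I_m)$ and using $\x^{r+1}=\W\bm{\Phi}^r_\uptau$ together with double stochasticity, the $\beta\y^r$ term drops out and the mean iterate $\bar{x}^r\define(1/N)\sum_i x_i^r$ obeys a plain $\uptau$-step stochastic-gradient recursion
\begin{equation*}
	\bar{x}^{r+1}=\bar{x}^r-\alpha\sum_{t=0}^{\uptau-1}\frac{1}{N}\sum_{i=1}^N\grad F_i(\phi_{i,t}^r;\xi_{i,t}^r).
\end{equation*}
This isolates a centralized-like ``server'' view whose effective stepsize is $\alpha\uptau$.

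Next I would establish the descent estimate for the mean. Expanding $\Ex\|\bar{x}^{r+1}-x^\star\|^2$, I add and subtract $\grad f_i(\bar{x}^r)$ inside each gradient and invoke $L$-smoothness (Assumption~\ref{assump:smoothness}) and (strong) convexity (Assumption~\ref{assump:cvx}) to replace the drifted gradients $\grad f_i(\phi_{i,t}^r)$ by $\grad f(\bar{x}^r)$ at the price of the aggregate deviation $\frac{1}{N}\sum_i\|\phi_{i,t}^r-\bar{x}^r\|^2$. Using Assumption~\ref{assump:noise} and the independence of the noise across nodes, the stochastic term \emph{in the mean direction} averages to $\cO(\alpha\sigma^2/N)$, which is the source of the linear-speedup term; the remaining noise is charged to the disagreement analysis and scales as $\alpha^2\uptau L\sigma^2/(1-\lambda)$ without the $1/N$. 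This produces, in the strongly convex case, a one-round inequality of the form $\Ex\|\bar{x}^{r+1}-x^\star\|^2\le(1-\tfrac{\alpha\uptau\mu}{4})\Ex\|\bar{x}^r-x^\star\|^2+(\text{drift})+(\text{noise})$, and, for $\mu=0$, a telescoping descent in the function value $\Ex[f(\bar{x}^r)-f(x^\star)]$ carrying the effective step $\alpha\uptau$.

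The main obstacle is the disagreement analysis, which I would carry out using the positive definiteness of $W$ so that $\I-\W$ is well-behaved on the mean-free subspace. I would write coupled recursions for the consensus error $\Ex\|\x^r-\one\otimes\bar{x}^r\|^2$ and the dual error $\Ex\|\y^r-\y^\star\|^2$ measured in a suitable $(\I-\W)$-weighted norm, where $\y^\star=-(\alpha/\beta)\grad\f(\x^\star)$ is the dual fixed point of \eqref{uda_ed}. Within a round the $\uptau$ local steps inject drift of order $\alpha^2\uptau^2$ (multiplying gradient magnitudes and an $\uptau\sigma^2$ noise term), while the diffusion step \eqref{local_ed_comm_round} contracts existing disagreement at rate $\lambda$ from \eqref{mixing_rate}; summing the resulting geometric series yields the $1/(1-\lambda)$ and $1/(1-\lambda)^2$ factors appearing in \eqref{eq:thm2:cvx}--\eqref{eq:thm2:strongcvx}. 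The choice $\beta=1/\uptau$ is what keeps the per-round dual growth balanced against this contraction, and its role must be tracked explicitly so that the requirement ``$\alpha$ sufficiently small'' becomes quantified.

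Finally I would assemble a Lyapunov function $V_r=\Ex\|\bar{x}^r-x^\star\|^2+c_1\Ex\|\x^r-\one\otimes\bar{x}^r\|^2+c_2\Ex\|\y^r-\y^\star\|^2$ with constants $c_1,c_2$ chosen so that the cross terms between the mean descent and the disagreement recursion are dominated once $\alpha$ is small enough. For $\mu>0$ this gives $V_{r+1}\le(1-\tfrac{\alpha\uptau\mu}{4})V_r+(\text{noise floor})$; iterating over $r$ yields \eqref{eq:thm2:strongcvx}, with $a_0=V_0$ absorbing the initialization and the initial dual mismatch $\|\y^0-\y^\star\|^2$ --- which, since $\y^\star\propto\alpha\uptau\,\grad\f(\x^\star)$ restricted to the mean-free space, is exactly where the heterogeneity measure $\varsigma_0^2$ enters. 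For $\mu=0$ I drop the contraction, telescope the mean descent over $r=0,\dots,R-1$, divide by $R$, and bound the accumulated disagreement by the same geometric sums; the initial terms $\|\bar{x}^0-x^\star\|^2$ and $\alpha^2\uptau^2L^2\varsigma_0^2/(1-\lambda)^2$ then form the deterministic part and the $\sigma^2$ terms the stochastic part, matching \eqref{eq:thm2:cvx}.
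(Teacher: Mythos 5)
Your high-level architecture — a mean recursion driven by the averaged stochastic gradients, a drift bound for the $\uptau$ local steps, a disagreement recursion, and a Lyapunov recombination — matches the paper's, and several ingredients are right (the mean update with effective stepsize $\alpha\uptau$, the $\cO(\alpha\sigma^2/N)$ speedup term, the dual fixed point $\y^\star=-(\alpha/\beta)\grad\f(\x^\star)$). The genuine gap is in the disagreement analysis, which is precisely the technical crux of this theorem. You claim the diffusion step ``contracts existing disagreement at rate $\lambda$'' and that the remaining cross terms can be ``dominated once $\alpha$ is small enough.'' Neither holds for \alg{LED}. Unlike \alg{DSGD}, the primal disagreement here is coupled to the dual disagreement with $O(1)$ coefficients: with $\beta=1/\uptau$, the pair $(\widehat{\Q}\tran\x^r,\widehat{\Q}\tran\z^r)$ evolves through the matrix $\D$ in \eqref{matrix_cons}, whose off-diagonal blocks $-\widehat{\mathbf{\Lambda}}$ and $\I-\widehat{\mathbf{\Lambda}}$ do not shrink as $\alpha\to 0$, so no choice of small stepsize suppresses the primal--dual cross terms. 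Moreover, $\D$ is \emph{not} a contraction in the Euclidean norm at all: its eigenvalues are complex, $\lambda_i\pm\jmath\sqrt{\lambda_i-\lambda_i^2}$, with modulus $\sqrt{\lambda_i}$, so the best achievable per-round rate is $\sqrt{\lambda}$ (not $\lambda$), and it only becomes visible after a similarity transformation. This is exactly why the paper proves Lemma \ref{lemma:transf_avg_hat}: each $2\times 2$ block of $\D$ is diagonalized with the complex eigenbasis $\widehat{\V}$, producing the transformed deviation $\widehat{\d}^r$ whose linear dynamics have norm $\delta=\sqrt{\lambda}<1$; all subsequent estimates (Lemma \ref{lemma:cons_inequ}, the coupled system \eqref{linear_dynamical_error2}, and the $(I-A)^{-1}b$ noise-floor computation for $\mu>0$) are carried out in that coordinate system.

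Your phrase ``a suitable $(\I-\W)$-weighted norm'' gestures in the right direction: a real weighted norm whose per-mode weight ratio between primal and dual blocks is tuned to $1-\lambda_i$ (equivalently, a block-rotation similarity, which exists because the eigenvalues are complex conjugates) can indeed be made contractive, and it is essentially the paper's $\widehat{\V}^{-1}$, whose rows carry the $\widehat{\mathbf{\Lambda}}^{-1/2}$ and $(\I-\widehat{\mathbf{\Lambda}})^{-1/2}$ weightings. But your proposal never constructs these weights, and the two quantitative claims it does make in their place (contraction at rate $\lambda$; cross terms handled by small $\alpha$) are the ones that fail: a Lyapunov function with two scalar constants $c_1,c_2$ multiplying $\Ex\|\x^r-\one\otimes\bar{x}^r\|^2$ and the dual error cannot satisfy the required matrix inequality uniformly over the modes $\lambda_i$, since the Schur-complement condition forces a mode-dependent (i.e., matrix-valued) weighting. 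So the skeleton is right, but the key lemma that makes the disagreement recursion contractive — and with it the source of the $1/(1-\lambda)$ and $1/(1-\lambda)^2$ factors in \eqref{eq:thm2:cvx}--\eqref{eq:thm2:strongcvx} — is missing, and the argument as written would not close.
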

For the nonconvex case and when $\sigma >0$, Theorem \ref{thm:noncvx} shows that the algorithm converges to a radius around some stationary point, which can be controlled by the stepsize $\alpha$. Without any additional assumptions, a stationary point is the best guarantee possible and is a satisfactory criterion to measure the performance of distributed methods with nonconvex objectives \cite{karimireddy2019scaffold,koloskova2020unified,alghunaim2021unified}. For the convex case, Theorem \ref{thm:cvx} shows that the algorithm converges around some optimal solution controlled by the stepsize $\alpha$. 
\begin{corollary}[\textsc{Exact Convergence in the Deterministic Case}] \label{coro:ratesdet}
	Suppose the conditions of Theorem \ref{thm:noncvx} are met. Then, in the noiseless deterministic case where $\sigma=0$, substituting this into \eqref{eq:thm1:noncvx} gives the nonconvex rate:
	\[
	\cO \left( \frac{1}{R} + \frac{\varsigma_0^2}{(1-\lambda)^2 R} \right).
	\]
	Therefore, \alg{LED} converges exactly in the deterministic case with a rate of $1/R$. Similar results can be obtained for the convex cases. \qd
\end{corollary}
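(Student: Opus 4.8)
The plan is to obtain the corollary directly from the nonconvex bound \eqref{eq:thm1:noncvx} by substituting $\sigma=0$ and then tracking which quantities are held fixed. First I would observe that the stepsize conditions invoked by Theorem \ref{thm:noncvx} (``sufficiently small constant $\alpha$, with $\beta=1/\uptau$'') are determined by $L$, $\uptau$, and $\lambda$ and do \emph{not} involve the noise level $\sigma$. Consequently, any admissible constant stepsize from the stochastic setting remains admissible in the noiseless limit, so I may fix such an $\alpha$ together with a fixed number of local steps $\uptau$ and simply set $\sigma=0$ in \eqref{eq:thm1:noncvx}.

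The key point is that both terms of the stochastic part, namely $\alpha L\sigma^2/N$ and $\alpha^2\uptau L^2\sigma^2/(1-\lambda)$, carry $\sigma^2$ as a multiplicative factor and therefore vanish identically when $\sigma=0$. What survives is only the deterministic part,
\[
\frac{1}{R}\sum_{r=0}^{R-1}\cE_r \;\leq\; \cO\!\left(\frac{f(\bar{x}^{0})-f^\star}{\alpha\uptau R} + \frac{\alpha^2\uptau^2 L^2\varsigma_0^2}{(1-\lambda)^2 R}\right).
\]
Treating $\alpha,\uptau,L$ and the initial suboptimality $f(\bar{x}^{0})-f^\star$ as absolute constants absorbed into the $\cO$, the first term collapses to $\cO(1/R)$, while in the second term I would keep the heterogeneity $\varsigma_0^2$ and the connectivity factor $(1-\lambda)^{-2}$ explicit to expose how the transient depends on the data dissimilarity and network topology. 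This produces exactly the claimed $\cO\!\left(1/R + \varsigma_0^2/((1-\lambda)^2 R)\right)$. Since the right-hand side tends to zero as $R\to\infty$, the averaged stationarity measure $\cE_r$ vanishes, which is the asserted exact convergence to a stationary point at the $1/R$ rate.

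For the convex claim I would repeat the identical substitution in \eqref{eq:thm2:cvx}: the stochastic part $\alpha\sigma^2/N + \alpha^2\uptau L\sigma^2/(1-\lambda)$ again vanishes at $\sigma=0$, leaving the $1/R$ deterministic term after the same constant bookkeeping. For the strongly-convex case, setting $\sigma=0$ in \eqref{eq:thm2:strongcvx} eliminates the stochastic correction and leaves only the geometric term $(1-\alpha\uptau\mu/4)^r a_0$, yielding exact linear convergence of $\Ex\|\bar{x}^{r}-x^\star\|^2$ to zero.

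There is no genuine analytical obstacle here, since the result is a bookkeeping corollary of the main theorems; the only points requiring a word of care are confirming that the stepsize admissibility conditions are $\sigma$-independent (so that a valid constant stepsize survives the $\sigma=0$ limit) and deciding which problem-dependent constants to suppress into the $\cO$ versus keep explicit. The contrast being emphasized---that \alg{LED} drives the error to zero rather than to a nonvanishing noise or bias floor---is precisely what separates it from \alg{Local-DSGD}, whose analogous bound retains a heterogeneity-induced bias term even when $\sigma=0$.
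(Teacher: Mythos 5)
Your proposal is correct and follows exactly the paper's own (implicit) argument: the corollary is obtained by setting $\sigma=0$ in \eqref{eq:thm1:noncvx}, which annihilates the stochastic part and leaves the deterministic terms, with constants absorbed into the $\cO(\cdot)$. Your additional observation that the stepsize admissibility conditions are $\sigma$-independent is a sound (and worthwhile) point of care, but it does not change the route—this is the same one-line substitution the paper intends.
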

For the stochastic case, the stepsize $\alpha$  is tuned based on $R$ to obtain the following result.
\begin{corollary}[\sc \small Convergence rates in the stochastic case]
	\label{coro:rates}
	For the nonconvex, convex, and strongly convex cases, there exists a stepsize $\alpha$ that yields the following rates.
	\begin{itemize}
		\item 	Nonconvex rate:		
		\begin{align} \label{rate_noncvx}
			&\frac{1}{R}	 \sum_{r=0}^{R-1} \cE_r 
			\leq \cO\left(\frac{   \sigma}{N \uptau R}\right)^{\frac{1}{2}}
			+ \cO \left(\frac{1}{(1-\lambda)^{1/3}}
			\left(\frac{    \sigma}{ \sqrt{\uptau} R}\right)^{\frac{2}{3}} \right)
			+ \cO\left( \frac{  \tfrac{1}{1-\lambda}+     \varsigma_0^2  }{R} \right),
		\end{align}
		where  $\cE_r \define  \textstyle   \Ex  \| \grad f(\bar{x}^{r})  \|^2  +  \frac{1}{\uptau} \sum\limits_{t} \|  \tfrac{1}{N}  \sum\limits_{i} \nabla f_i(\phi^r_{i,t})\|^2$.
		\item Convex rate:
		\begin{align} \label{rate_cvx}
			&	\frac{1}{R}	 \sum_{r=0}^{R-1} \cE_r
			\leq \cO\left(\frac{   \sigma}{N \uptau R}\right)^{\frac{1}{2}}
			+ \cO \left(\frac{1}{(1-\lambda)^{1/3}}
			\left(\frac{    \sigma}{ \sqrt{\uptau} R}\right)^{\frac{2}{3}} \right)
			+ \cO\left( \frac{  \frac{1}{1-\lambda}+      \varsigma_0^2  }{R} \right),
		\end{align}
		where $\cE_r \define  \Ex [f(\bar{x}^r) - f(x^\star)]$.
		
		\item Strongly-convex rate:
		\begin{align}  \label{rate_strongcvx}
			& \Ex  \|\bar{x}^{R}-x^\star\|^2  
			\leq \tilde{\mathcal{O}}\left(\frac{\sigma^2}{ \uptau N R}\right) 
			+ \tilde{\mathcal{O}}\left(\frac{\sigma^2  }{ (1-\lambda) \uptau  R^2}\right) 
			+	\tilde{\mathcal{O}}\left(   \exp \left[- (1-\lambda) R\right] \left( \|\bar{x}^{0}-x^\star\|^2 +    \varsigma_0   \right)
			\right),
		\end{align}
		where the notation $\tilde{\mathcal{O}}(\cdot)$ ignores logarithmic factors.
	\end{itemize}
	Here, $\bar{x}^{0} \define (1/N) \sum_{i=1}^N x_i^0$ and  $\varsigma_0^2 \define \frac{1}{N} \sum_{i=1}^N \|\grad f_i(\bar{x}^0) -  \grad f(\bar{x}^0) \|^2$.
\end{corollary}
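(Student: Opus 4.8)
The plan is to treat Corollary~\ref{coro:rates} as a pure stepsize-tuning consequence of Theorems~\ref{thm:noncvx} and~\ref{thm:cvx}, with no new estimate on the \alg{LED} dynamics required. In each case the right-hand side of the theorem is an explicit function of the single free parameter $\alpha$, constrained to $(0,\alpha_{\max}]$ where $\alpha_{\max}=\Theta\big((1-\lambda)/(\uptau L)\big)$ is the ``sufficiently small'' threshold implicit in the theorem statements; I would choose $\alpha$ as a function of $R,\sigma,N,\uptau,\lambda$ to minimize the bound up to constants.

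For the nonconvex and convex cases I would regroup the terms of \eqref{eq:thm1:noncvx} (resp.\ \eqref{eq:thm2:cvx}) into the canonical form
\[
	\frac{1}{R}\sum_{r=0}^{R-1}\cE_r \;\le\; \frac{A}{\alpha R} + B\alpha + C\alpha^2 + \frac{D}{R}\alpha^2,
\]
where in the nonconvex case $A=\Theta\big((f(\bar{x}^0)-f^\star)/\uptau\big)$, $B=\Theta(L\sigma^2/N)$, $C=\Theta\big(\uptau L^2\sigma^2/(1-\lambda)\big)$ and $D=\Theta\big(\uptau^2 L^2\varsigma_0^2/(1-\lambda)^2\big)$, and the convex case uses $A=\Theta(\|\bar{x}^0-x^\star\|^2/\uptau)$ together with one fewer factor of $L$ in $B$ and $C$. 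I would then invoke the standard tuning lemma for a bound of the form $A/(\alpha R)+B\alpha+C\alpha^2$ subject to $\alpha\le\alpha_{\max}$: the best feasible $\alpha$ yields $\cO\big(\sqrt{AB/R}+A^{2/3}(C/R^2)^{1/3}+A/(\alpha_{\max}R)\big)$. Substituting the constants reproduces the stated first two terms of \eqref{rate_noncvx}/\eqref{rate_cvx} (with $L$ absorbed into $\cO$), while $A/(\alpha_{\max}R)=\cO\big(1/((1-\lambda)R)\big)$. The remaining deterministic quadratic $D\alpha^2/R$ is disposed of by the crude estimate $\alpha\le\alpha_{\max}$, giving $D\alpha^2/R\le D\alpha_{\max}^2/R=\cO(\varsigma_0^2/R)$, which supplies the last piece of the third term.

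For the strongly-convex case I would instead start from the contracting recursion \eqref{eq:thm2:strongcvx}, written as $\Ex\|\bar{x}^R-x^\star\|^2\le(1-\tfrac{\alpha\uptau\mu}{4})^R a_0 + c_1\alpha + c_2\alpha^2$ with $c_1=\Theta(\sigma^2/(\mu N))$ and $c_2=\Theta(\uptau L\sigma^2/(\mu(1-\lambda)))$, and apply the standard ``linear rate plus noise floor'' tuning lemma. Setting the effective rate $a=\uptau\mu/4$ and choosing $\alpha=\min\{\alpha_{\max},\,\tilde\Theta(1/(aR))\}$ gives $\tilde{\cO}\big(a_0\exp(-a\alpha_{\max}R)+c_1/(aR)+c_2/(a^2R^2)\big)$. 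Treating the condition number $L/\mu$ as a constant, $a\alpha_{\max}=\Theta(1-\lambda)$, so the exponent becomes $\Theta((1-\lambda)R)$, while $c_1/(aR)=\tilde{\cO}(\sigma^2/(\uptau N R))$ and $c_2/(a^2R^2)=\tilde{\cO}(\sigma^2/((1-\lambda)\uptau R^2))$ reproduce the two polynomial noise terms; the initialization constant $a_0$ carries the $\|\bar{x}^0-x^\star\|^2+\varsigma_0$ dependence of the exponential term.

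The substitutions themselves are routine; the step demanding care is the bookkeeping of the $\uptau$ and $(1-\lambda)$ factors. Specifically I must confirm that the implicit $\alpha_{\max}$ behind ``sufficiently small'' is indeed $\Theta((1-\lambda)/(\uptau L))$, since this is exactly what turns $A/(\alpha_{\max}R)$ into $1/((1-\lambda)R)$ and the contraction exponent into $(1-\lambda)R$, and that the deterministic quadratic collapses to $\varsigma_0^2/R$ regardless of how small the tuned $\alpha$ turns out to be. The strongly-convex case additionally needs $R$ large enough for the tuned $\tilde\Theta(1/(\uptau\mu R))$ stepsize to be feasible (otherwise only the exponential term is active) and careful tracking of the logarithmic factors swept into $\tilde{\cO}$.
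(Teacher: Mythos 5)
Your proposal is correct and follows essentially the same route as the paper: the paper's Appendix C proof likewise reduces each theorem bound (under equal initialization) to the canonical form $\frac{c_0}{\alpha R}+c_1\alpha+c_2\alpha^2+\frac{a_0\alpha^2}{R}$, applies the standard constrained stepsize-tuning argument (choosing $\alpha=\min\{(c_0/(c_1R))^{1/2},(c_0/(c_2R))^{1/3},1/\underline{\alpha}\}$ with $1/\underline{\alpha}=\cO((1-\lambda)/(\uptau L))$, so that the constrained case produces the $\cO(1/((1-\lambda)R))$ term and the residual quadratic collapses to $\cO(\varsigma_0^2/R)$), and in the strongly convex case uses the same logarithmically tuned stepsize with the two-case (feasible vs.\ saturated) analysis, with $a_0\le c_0+\alpha^2 b_0$ carrying the $\varsigma_0$ dependence into the exponential term. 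Your bookkeeping of the $\uptau$, $(1-\lambda)$, and condition-number factors matches the paper's constants, so no gap remains.
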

\begin{remark}[\sc \small Practical stepsize]
	\rm 
	The stepsize yielding the results in Corollary \ref{coro:rates} is intricate and not very practical. It's chosen mainly for theoretical reasons, as it provides the optimal convergence rate based on our bounds. We opted for this choice to ensure fair comparisons with \alg{SCAFFOLD}, \alg{Local-DSGD}, and \alg{K-GT}, which also tune the stepsize in a similar manner.
	
	In practice, we  typically set $\alpha=1/\sqrt{R}$ for both the nonconvex and convex cases, and $\alpha=1/R$ for the strongly-convex case. For instance, if we plug in  $\alpha=\frac{1}{L+\sqrt{\uptau R  /N}}$ into \eqref{eq:thm1:noncvx}, we obtain the rate
	\begin{align}
		\frac{1}{R}	 \sum_{r=0}^{R-1} \cE_r 		 
		&\leq \cO \left( \tfrac{  \sigma^2}{N \uptau R} \right)^{1/2}	  
		+ \cO \left(   
		\tfrac{ N \sigma^2}{(1-\lambda) \uptau R}
		+ \tfrac{ N    \varsigma_0^2 }{ (1-\lambda)^2  R^2}  	\right).
	\end{align}
	For large $R$, the above rate is also $1/\sqrt{N \uptau R}$ consistent with \eqref{rate_noncvx}.   \qd
\end{remark}
	\begin{corollary}[\sc \small Centralized rate]
		Suppose Assumptions \ref{assump:noise}--\ref{assump:smoothness} hold. Then, with the appropriate parameters, the \alg{LED} under the server-workers scenario, as listed in Alg.~\ref{alg:fed-ed}, converges at the rate
				\begin{align} 
			\frac{1}{R}	 \sum_{r=0}^{R-1} \cE_r  		 
			&\leq \cO \left(	\frac{ \sigma }{ \sqrt{N R \uptau}} +  \frac{   f(\bar{x}^{0}) - f^\star + \varsigma_{0}^2}{  R }
			\right)
		\end{align}
	    for the nonconvex case. Here,  $\cE_r \define  \textstyle   \Ex  \| \grad f(\bar{x}^{r})  \|^2  +  \frac{1}{\uptau} \sum\limits_{t} \|  \tfrac{1}{N}  \sum\limits_{i} \nabla f_i(\phi^r_{i,t})\|^2$ and  $\varsigma_0^2 \define \frac{1}{N} \sum_{i=1}^N \|\grad f_i(\bar{x}^0) -  \grad f(\bar{x}^0) \|^2$. (The proof is given in Appendix \ref{app_server_proof}.)
	\end{corollary}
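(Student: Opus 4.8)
The plan is to specialize the networked \alg{LED} recursion \eqref{ed_updates_network} to the fully connected (server–workers) topology, where $\W=\J\define\frac{1}{N}\one\one\tran$. Since $\J$ is the orthogonal projection onto the consensus subspace, it satisfies $\J^2=\J$, $(\I-\J)^2=\I-\J$, and $\J(\I-\J)=\zero$, and its mixing rate is $\lambda=\|\J-\tfrac1N\one\one\tran\|=0$, so $1-\lambda=1$. Note first that $\J$ is \emph{not} positive definite (it has eigenvalue $0$ with multiplicity $N-1$), so the positive-definiteness part of Assumption \ref{assump:network} fails and Theorem \ref{thm:noncvx} cannot be invoked verbatim; a standalone argument is needed. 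However, the projection structure makes the analysis considerably cleaner than the general decentralized case: after each diffusion step $\x^{r+1}=\J\bm{\Phi}^r_{\uptau}$ every node holds the exact average, i.e. $x_i^{r+1}=\bar{x}^{r+1}$ for all $i$, so there is no consensus error carried across communication rounds—the only deviation from consensus is the client drift accumulated \emph{within} the $\uptau$ local steps. Moreover, since $\I-\J$ annihilates the average, the dual recursion gives $\tfrac1N\sum_{i=1}^N y_i^{r}=\zero$ for all $r$ (using the zero-average initialization $\y^0=(\I-\J)\x^0$).

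First I would track the averaged iterate. Because the averaged dual vanishes, averaging the local primal updates \eqref{local_ed_updates} yields the plain local-\alg{SGD} recursion $\bar{x}^{r+1}=\bar{x}^{r}-\alpha\sum_{t=0}^{\uptau-1}\tfrac1N\sum_{i=1}^N\grad F_i(\phi^r_{i,t};\xi^r_{i,t})$. Applying $L$-smoothness of $f$ (Assumption \ref{assump:smoothness}) and taking conditional expectations (Assumption \ref{assump:noise}) produces a descent inequality for $\Ex f(\bar x^{r})$ in which $\|\grad f(\bar x^r)\|^2$ and the averaged inner-step gradients $\|\tfrac1N\sum_i\grad f_i(\phi^r_{i,t})\|^2$ appear with a negative coefficient of order $\alpha\uptau$, the stochastic noise contributes a term of order $\alpha L\sigma^2/N$ (the $1/N$ coming from averaging $N$ independent gradients), and the client drift $\tfrac1N\sum_{i,t}\Ex\|\phi^r_{i,t}-\bar x^r\|^2$ enters as a positive error term to be controlled next.

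Next I would bound the within-round drift. Since all nodes start each round from consensus ($\phi^r_{i,0}=\bar x^r$) and the dual variable $y_i^r$—which lives in the range of $\I-\J$—cancels the persistent part of the gradient heterogeneity, unrolling \eqref{local_ed_updates} and using smoothness gives a drift bound of order $\alpha^2\uptau^2 L^2(\cdots)+\alpha^2\uptau\sigma^2$; the choice $\beta=1/\uptau$ is exactly what keeps the dual recursion stable so that the heterogeneity measure $\varsigma_0^2$ enters only through the initialization rather than as a standing bias. Substituting this into the descent inequality, summing over $r=0,\dots,R-1$, and telescoping $f(\bar x^0)-f^\star$ reproduces the bound \eqref{eq:thm1:noncvx} with $1-\lambda$ set to $1$, namely $\tfrac1R\sum_r\cE_r\le\cO\big(\tfrac{f(\bar x^0)-f^\star}{\alpha\uptau R}+\tfrac{\alpha^2\uptau^2 L^2\varsigma_0^2}{R}+\tfrac{\alpha L\sigma^2}{N}+\alpha^2\uptau L^2\sigma^2\big)$.

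Finally I would optimize the constant stepsize as in Corollary \ref{coro:rates}, taking $\alpha=\min\{\alpha_{\max},\,\Theta(\sqrt{N/(L\sigma^2\uptau R)})\}$ for a stability threshold $\alpha_{\max}$ of order $1/(L\uptau)$. Balancing the leading deterministic term $\tfrac{f(\bar x^0)-f^\star}{\alpha\uptau R}$ against the dominant noise term $\tfrac{\alpha L\sigma^2}{N}$ gives the linear-speedup term $\cO(\sigma/\sqrt{N\uptau R})$, while the constant-stepsize regime contributes the residual $\cO((f(\bar x^0)-f^\star+\varsigma_0^2)/R)$ and the remaining terms ($\alpha^2\uptau L^2\sigma^2$ and the $\varsigma_0^2$ term) are $\cO(1/R)$ or smaller; together these yield the stated rate. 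I expect the main obstacle to be re-establishing the drift estimates from scratch—since Assumption \ref{assump:network} no longer applies—and in particular controlling the coupling between the accumulated local drift and the dual variable $\y^r$ tightly enough that $\varsigma_0^2$ appears only at order $1/R$ and does not degrade into a persistent heterogeneity bias of the kind that afflicts \alg{Local-DSGD}.
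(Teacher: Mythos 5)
There is a genuine gap, and it sits exactly at your final tuning step. Your specialization of the decentralized analysis to $\W=\tfrac1N\one\one\tran$ (i.e., Algorithm~\ref{alg:led} with a fully connected graph, which is Algorithm~\ref{alg:fed-ed} with $\gamma=1$) correctly reproduces the bound \eqref{eq:thm1:noncvx} with $1-\lambda=1$:
\begin{align*}
\frac1R\sum_{r=0}^{R-1}\cE_r \;\leq\; \cO\Big(\tfrac{f(\bar x^0)-f^\star}{\alpha\uptau R}\Big)+\cO\Big(\tfrac{\alpha^2\uptau^2L^2\varsigma_0^2}{R}\Big)+\cO\Big(\tfrac{\alpha L\sigma^2}{N}\Big)+\cO\big(\alpha^2\uptau L^2\sigma^2\big),
\end{align*}
but note that the last stochastic term carries \emph{no} $1/N$ factor (it originates from the within-round drift/deviation noise, not from the averaged centroid update). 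Your claim that after balancing $\tfrac{f(\bar x^0)-f^\star}{\alpha\uptau R}$ against $\tfrac{\alpha L\sigma^2}{N}$ ``the remaining terms are $\cO(1/R)$ or smaller'' is false: the balancing stepsize is $\alpha\asymp\sqrt{N(f(\bar x^0)-f^\star)/(L\sigma^2\uptau R)}$, and plugging it into the unfavorable term gives $\alpha^2\uptau L^2\sigma^2\asymp N L\,(f(\bar x^0)-f^\star)/R$, i.e., $\cO(N/R)$, not $\cO(1/R)$. Equivalently, generic tuning of your three-term bound yields the decentralized rate \eqref{rate_noncvx} with $\lambda=0$, which still contains the middle term $\cO\big((\sigma/(\sqrt{\uptau}R))^{2/3}\big)$; neither form matches the corollary, whose $1/R$ term is $N$-independent. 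This factor-of-$N$ degradation is precisely the \alg{VRL-SGD}-type rate that the corollary is claimed to improve upon, so it cannot be waved away (it is absorbable only under an additional restriction like $R\gtrsim N^3\uptau$, which the statement does not impose).

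The paper's proof in Appendix~\ref{app_server_proof} avoids this by analyzing Algorithm~\ref{alg:fed-ed} with the extra \emph{server-side} stepsize $\gamma$ and choosing $\gamma=\sqrt{N}$; this is the ``appropriate parameters'' in the statement and is the ingredient your proposal is missing. The point of the two-stepsize structure is that the centroid moves with effective stepsize $\tilde\alpha=\alpha\gamma\uptau$ while the local updates (and hence the drift and its noise) scale with $\alpha=\tilde\alpha/(\gamma\uptau)$. The drift-related noise term then appears as $\cO\big(\tilde\alpha^2L^2\sigma^2/(\uptau\gamma^2)\big)$, so setting $\gamma=\sqrt{N}$ endows it with the needed $1/N$ factor; after this, \emph{every} stochastic term has $1/N$, and the single tuning $\tilde\alpha=\cO\big(\min\{1/L,\ \tfrac{f(\bar x^0)-f^\star}{\sigma}\sqrt{N\uptau/R}\}\big)$ gives the clean two-term rate of the corollary. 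So your outline (projection structure, standalone analysis since the positive-definiteness in Assumption~\ref{assump:network} fails for $\tfrac1N\one\one\tran$, zero-mean dual, drift bound) is sound as far as it goes, but to close the proof you must either introduce and exploit the global/local stepsize separation as the paper does, or accept a rate weaker by a factor of $N$ in the $1/R$ term.
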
	
\noindent \textbf{Discussion of our results.} We will discuss our results for the nonconvex case; similar arguments apply for the convex case. For large \(R\), the higher-order terms from \eqref{rate_noncvx} can be neglected, and the dominant part becomes on the order of \(\left(\frac{\sigma}{N \uptau R}\right)^{\frac{1}{2}}\). This suggests that to achieve an \(\epsilon\) accuracy, we need \(R \geq \frac{1}{N \uptau \epsilon^2}\). In this scenario, the advantages of \(N\) and \(\uptau\) are evident. Furthermore, the number of communication rounds required to achieve \(\epsilon\) accuracy decreases linearly with \(N\); this characteristic is termed linear speed-up \cite{lian2017can}. When \(R\) is not sufficiently large, then the higher-order terms, specifically 
$
\left(\frac{1}{(1-\lambda)^{1/3}}(\frac{\sigma}{\sqrt{\uptau} R})^{\frac{2}{3}}\right)$
and 
$
\left( \frac{1/(1-\lambda) + \varsigma_0^2}{R} \right)$,
cannot be ignored as they may slow down the convergence. For instance, when the network is sparse, the quantity \(1-\lambda\) can be extremely small as \(\lambda \approx 0\); in this situation, the rate becomes slower, as will be discussed in Section \ref{sec-sim}. Table \ref{table_rates} lists the convergence rate of \alg{LED} compared to state-of-the-art results, in terms of the number of communication rounds needed to achieve \(\epsilon\) accuracy.

Compared to our results, observe that \alg{Local-DSGD} \cite{koloskova2020unified} introduces an additional term 
 $
\frac{\varsigma}{1-\lambda} \frac{1}{\epsilon^{3 / 2}}
$ (or $\frac{\varsigma}{1-\lambda} \frac{1}{\epsilon^{1 / 2}}$ for the strongly convex case)
where \(\varsigma\) represents the local functions heterogeneity constant, such that 
$
\frac{1}{N} \sum_{i=1}^N \|\nabla f_i(x) - \nabla f(x)\|^2 \leq \varsigma^2$.
This additional term result in suboptimal convergence rates, even in deterministic (\(\sigma=0\)) scenarios, causing a significant slow down in convergence (refer to the Simulation section for more details).  When compared to \alg{$K$-GT} \cite{liu2023decentralized}, the second and third terms are
$
\left( \frac{\sigma}{(1-\lambda)^2 \sqrt{\uptau}} \right) \frac{1}{\epsilon^{3 / 2}} + \frac{1}{(1-\lambda)^2 \epsilon}$, 
whereas in our rate for \alg{LED}, these are
$
( \frac{\sigma}{\sqrt{(1-\lambda) \uptau}} ) \frac{1}{\epsilon^{3 / 2}} + \frac{1}{(1-\lambda) \epsilon}$.
The factor \(1-\lambda\) becomes notably small for sparse networks, which indicates that the performance of \alg{$K$-GT} may degrade significantly relative to \alg{LED} in sparsely connected networks. Note that considering a single local step, with \(\uptau=1\), our rates align with the best-established decentralized rates \cite{alghunaim2021unified}.

The table also enumerates the rate of the centralized method \alg{SCAFFOLD} as cited in \cite{karimireddy2019scaffold}. For centralized networks defined by \(W=(1/N)\one\tran \one\), our rate matches that of \alg{SCAFFOLD} \cite{karimireddy2019scaffold}. In fact, our rate is more refined than \alg{VRL-SGD}, presented in \cite{liang2019variance}. Specifically, our bound allows for setting 
$
\uptau = \cO\left(\frac{1}{N \epsilon}\right)$. In this scenario, the number of communication rounds necessary to achieve $\epsilon$ precision is given by  $
R = \cO\left(\frac{1}{\epsilon}\right)$.
In contrast, \cite{liang2019variance} suggests that achieving \(\epsilon\) precision requires a communication round count worse by a factor of \(N\), specifically 
$
R = \cO\left(\frac{N}{\epsilon}\right)$
(as seen in \cite[Table 6]{haddadpour2021federated}). Moreover, in the convex scenario, our rate $
R = \cO\left(\frac{1}{\epsilon}\right)$
is sharper than that of \alg{FedGate} from \cite{haddadpour2021federated}, which is 
$
R = \cO\left(\frac{1}{\epsilon} \log\left(\frac{1}{\epsilon}\right)\right)
$
(also observed in \cite[Table 6]{haddadpour2021federated}). This implies that when adapting our analysis to centralized networks, we provide new and improved analyses for the methods \alg{VRL-SGD} \cite{liang2019variance} and \alg{FedGATE} \cite{haddadpour2021federated}.

	\begin{table*}[t]  \small
	\renewcommand{\arraystretch}{1.8}
	\begin{center}
		\caption{\small Number of communication rounds needed to achieve $\epsilon$ accuracy with $\uptau$ local updates. The rates for \text{\alg{SCAFFOLD} \cite{karimireddy2019scaffold}} are tailored to the case of full participation. In this table, $\rho=1-\lambda$, where $\lambda$ is the mixing rate of the network (for fully connected network $\rho=1$), $\sigma$ is the stochastic gradient noise, and $\varsigma$ is the functions heterogeneity bound such that $(1/N) \sum_{i=1}^N \|\grad f_i(x) - \grad f(x)\|^2 \leq \varsigma^2$. The convergence rate improves over \alg{Local-DSGD} and \alg{GT} methods. }
		\begin{tabular}{llll}
			\hline \textsc{Method} & \textsc{Nonconvex} & \textsc{Strongly Convex} & \textsc{Communication per-link} \\
			\hline \multicolumn{4}{c}{\textbf{Decentralized Setup}} \\
			\hline \text{\alg{Local-DSGD} \cite{koloskova2020unified}} & $\frac{\sigma^2}{N \uptau \epsilon^2}+ \left( \frac{\sigma}{\sqrt{\rho \uptau}} +\frac{\varsigma }{\rho} \right) \frac{1}{\epsilon^{3 / 2}}+\frac{1 }{\rho \epsilon}$ & $ \frac{\sigma^2}{\mu \uptau N \epsilon} +   \left( \frac{\sigma}{\sqrt{\rho \uptau}} +\frac{\varsigma }{\rho} \right) \frac{1}{\sqrt{\epsilon}}+\frac{ 1}{ \rho} \log \frac{1}{\epsilon}$ & One vector
			\\
			\hline \text{\alg{$K$-GT} \cite{liu2023decentralized}} & $\frac{\sigma^2}{N \uptau \epsilon^2}+ \left( \frac{\sigma}{\rho^2 \sqrt{ \uptau}}  \right) \frac{1}{\epsilon^{3 / 2}}+\frac{1 }{\rho^2 \epsilon}$ & N/A  & Two vectors
			\\
			\hline
			\rowcolor[gray]{0.9}  \alg{LED} \textbf{[this work]} &  $\frac{\sigma^2}{N \uptau \epsilon^2}+ \left( \frac{\sigma}{\sqrt{\rho \uptau}}  \right) \frac{1}{\epsilon^{3 / 2}} + \frac{1 }{\rho \epsilon}$ & $ \frac{\sigma^2}{\mu \uptau N \epsilon}+   \left( \frac{\sigma}{\sqrt{\rho \uptau}}  \right) \frac{1}{\sqrt{\epsilon}}+\frac{ 1}{ \rho} \log \frac{1}{\epsilon}$  & One vector \\
					\hline \multicolumn{4}{c}{\textbf{Centralized Setup}} \\
			\hline \text{\alg{SCAFFOLD} \cite{karimireddy2019scaffold}} & $
			\frac{\sigma^2}{N \uptau \epsilon^2}+\frac{1}{\epsilon}
			$ & $
			\frac{\sigma^2}{ N \uptau \epsilon}+ \log(\frac{1}{\epsilon})
			$ & Two vectors
			\\
			\hline
			\rowcolor[gray]{0.9}  \alg{LED} \textbf{[this work]} &  $
			\frac{\sigma^2}{N \uptau \epsilon^2}+\frac{1}{\epsilon}
			$ & $
			\frac{\sigma^2}{ N \uptau \epsilon}+ \log(\frac{1}{\epsilon})
			$ & One vector  \\
			\hline
		\end{tabular}
		\label{table_rates}
	\end{center}
\end{table*}
\section{Numerical simulations} \label{sec-sim}
In this section, we use numerical simulations to demonstrate and validate our findings on the logistic regression problem with a nonconvex regularizer  given by 
\[
\min_{x \in \real^m} \quad \frac{1}{N}\sum_{i=1}^N f_i(x) + \eta \, r(x),
\]
where
$f_i(x) = \frac{1}{S}\sum_{s=1}^S \ln\big(1 + \exp(-y_{i,s}h_{i,s}\tran x)\big)$ and $r(x) = \sum_{j=1}^m \frac{x(j)^2}{ 1 + x(j)^2}$.
In this problem, $x=\col\{x(j)\}_{j=1}^m \in \real^m$ is the unknown variable to be optimized. The training dataset held by node $i$ is represented as $\{h_{i,s}, y_{i,s}\}_{s=1}^S$, where $h_{i,s}\in \mathbb{R}^m$ is a feature vector and $y_{i,s} \in \{-1,+1\}$ is the corresponding label. The regularization $r(x)$ is a nonconvex and smooth function, and the regularization parameter $\eta > 0$ controls the influence of $r(x)$.

\noindent \textbf{Experimental settings.}   We generate local vectors according to $u^o_{i} = u^o + v_i$ where $u^o\sim \cN(0, \sigma_u^2  I_m)$ is a randomly generated vector, and $v_i \sim \cN(0, \sigma^2_h I_m)$. Given $u^o_i$,  the local feature vectors are generated as $h_{i,s} \sim \cN(0, 25 I_m)$ and the corresponding label $y_{i,s}$ is generated as follows: We first generate a random variable $z_{i,s} \sim \cU(0,1)$, then  if $z_{i,s} \le 1 + \exp(- h_{i,s}\tran u_i^o)$, we set $y_{i,s} = 1$; otherwise $y_{i,s} = -1$.  The stochastic gradients are generated as: $\nabla F_i(x) = {\nabla f}_i(x) + w_i$ where $w_i\sim \cN(0, \sigma^2 I_m)$.  All stochastic results are averaged over $100$ runs. In our simulations, we set  $N=15$, $m=5$, $S=1000$, $\eta = 0.01$, $\sigma_u=6$, $\sigma_h=2$, and $\sigma=10^{-3}$. The error criterion for all results is $\Ex \|\nabla f(\bar{x}^r)\|^2$ where $\bar{x}^r=\frac{1}{N}\sum_{i=1}^N x_i^r$.

\noindent \textbf{Simulation results.} Figure \ref{fig:decentalized_sim}  compares our \alg{LED} method with the decentralized methods \alg{L-DSGD} \cite{wang2021cooperative,koloskova2020unified}, \alg{LGT}   \cite{nguyen2022performance}, and	\alg{$K$-GT}   \cite{liu2023decentralized} for different local steps $\uptau=1$, $\uptau=5$, and $\uptau=10$.  In order to fairly compare the convergence rates of these methods, we individually tune the parameters of each algorithm so that each method reaches a predetermined error of 
$10^{-4}$
as quickly as possible. We used a ring (cycle) network and the weight matrix was generated using the Metropolis rule \cite{sayed2014nowbook} with $\lambda \approx 0.943$. We observe that \alg{LED}  outperforms all the other methods as we increase the number of local steps (rightmost plot). \alg{L-DSGD}  performs poorly because it cannot handle the local functions heterogeneity across the nodes. It's worth noting that increasing the number of local steps reduces the communication required to achieve the same level of accuracy.

Figure \ref{fig:centalized_sim}  shows the results against the centralized methods \alg{SCAFFOLD} \cite{karimireddy2019scaffold} and \alg{Local-SGD}; in this case, the network is fully connected $W=(1/N) \one \one\tran$.   We observe that both \alg{LED} and \alg{SCAFFOLD} perform similarly. Furthermore,  the performance of \alg{Local-DSGD} degrades as the number of local updates increases, as expected.  It's worth noting that  in these figures, the horizontal-axis refers to the number of communication rounds. For \alg{GT} and \alg{SCAFFOLD}, each agent must communicate two vectors to its neighbors per communication round. In contrast, \alg{LED} and \alg{Local-DSGD} requires the communication of only one vector.

	\noindent \textbf{Is local steps always beneficial?}
From Figure \ref{fig:decentalized_sim}, it can be observed that we can save on communication when we increase the number of local steps. However, these results hold for the stochastic case, and the significant benefits primarily arise from using more data per communication (similar to batch training). To further test the benefits of local steps, we consider the deterministic case, in which $\sigma=0$. Figures \ref{fig:led_sim_het}--\ref{fig:led_sim_iid} depict the results of \alg{LED} with different local steps and various topologies under both heterogeneous and similar data regimes. When the data is heterogeneous, Figure \ref{fig:led_sim_het} shows that the benefits of local steps are only visible in the fully connected network. Conversely, when the data is similar across nodes (minimal heterogeneity), there is a significant benefit to local steps, as shown in Figure \ref{fig:led_sim_iid}. However, for a sparse network (ring topology), we don't observe any noticeable advantage. These findings suggest that local steps can be beneficial when the network is well-connected and/or when data is consistent across the network.This can be explained by our theoretical results as follows. Assuming the network is well-connected ($\rho \approx 1$), we can disregard the term $1/\rho$ from the rate expression \eqref{rate_noncvx}. The dominant part of the rate then simplifies to
$
\cO(\frac{   \sigma}{N \uptau R})^{\frac{1}{2}}
+ \cO( \frac{     \varsigma_0^2  }{R} )$.
In this scenario, when data heterogeneity is relatively small (e.g., $ \varsigma_0^2=\cO(1/\sqrt{\uptau})$), the benefits of $\uptau$ become evident. Conversely, if the network is sparse ($\rho \approx 0$), then $1/\rho$ can be quite large. In such a case, the dominant terms in the rate would be
$
\cO(\frac{   \sigma}{N \uptau R})^{\frac{1}{2}}
+ \cO (\frac{1}{\rho^{1/3}}
(\frac{    \sigma}{ \sqrt{\uptau} R})^{\frac{2}{3}} )
+ \cO( \frac{       \varsigma_0^2  }{\rho R} )$.
Notice that $1/\rho$ adversely affects the higher-order terms, thus slowing down convergence.

				\begin{figure}[t]
		\begin{center}
			\includegraphics[scale=0.33]{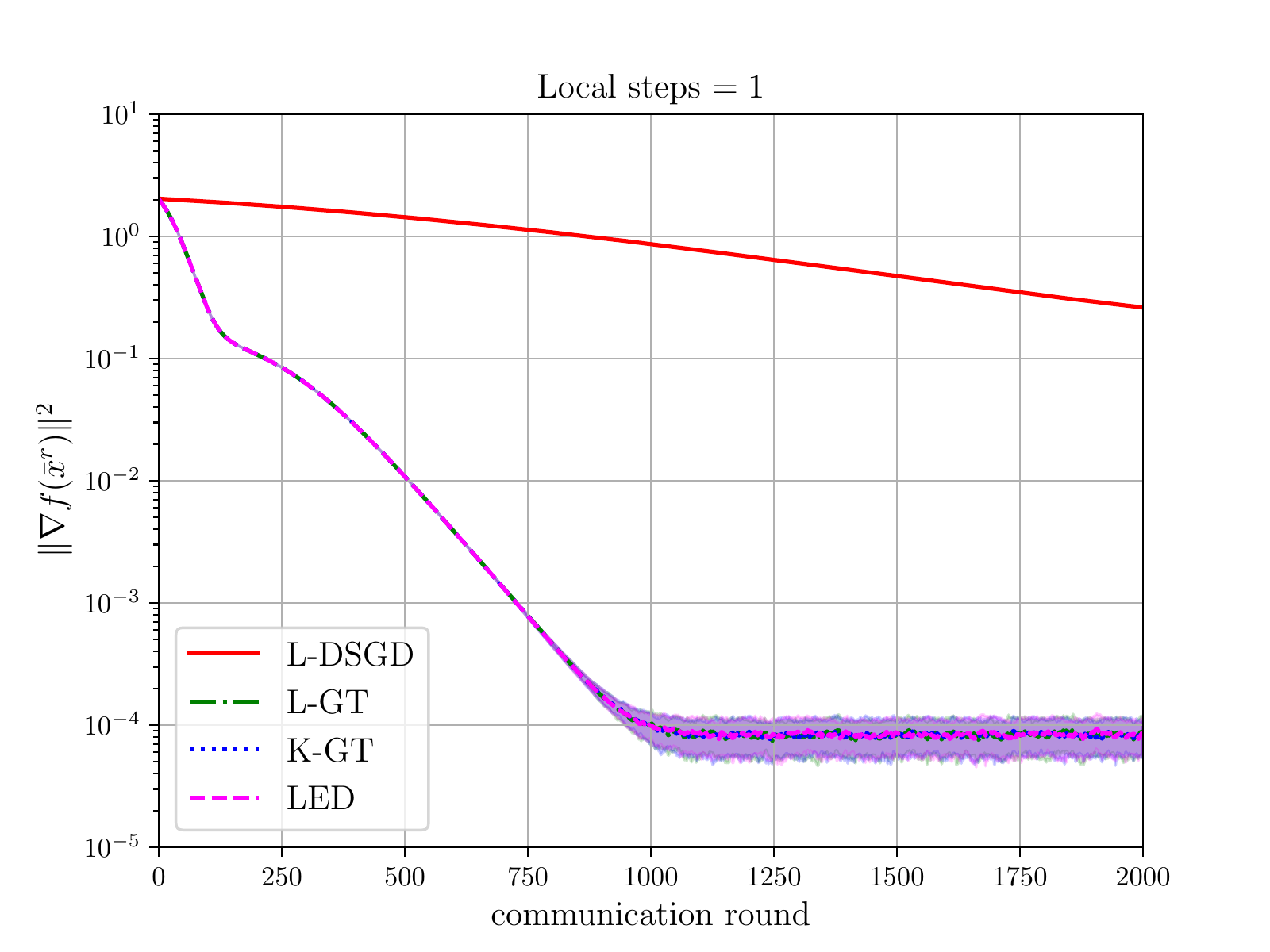}
			\includegraphics[scale=0.33]{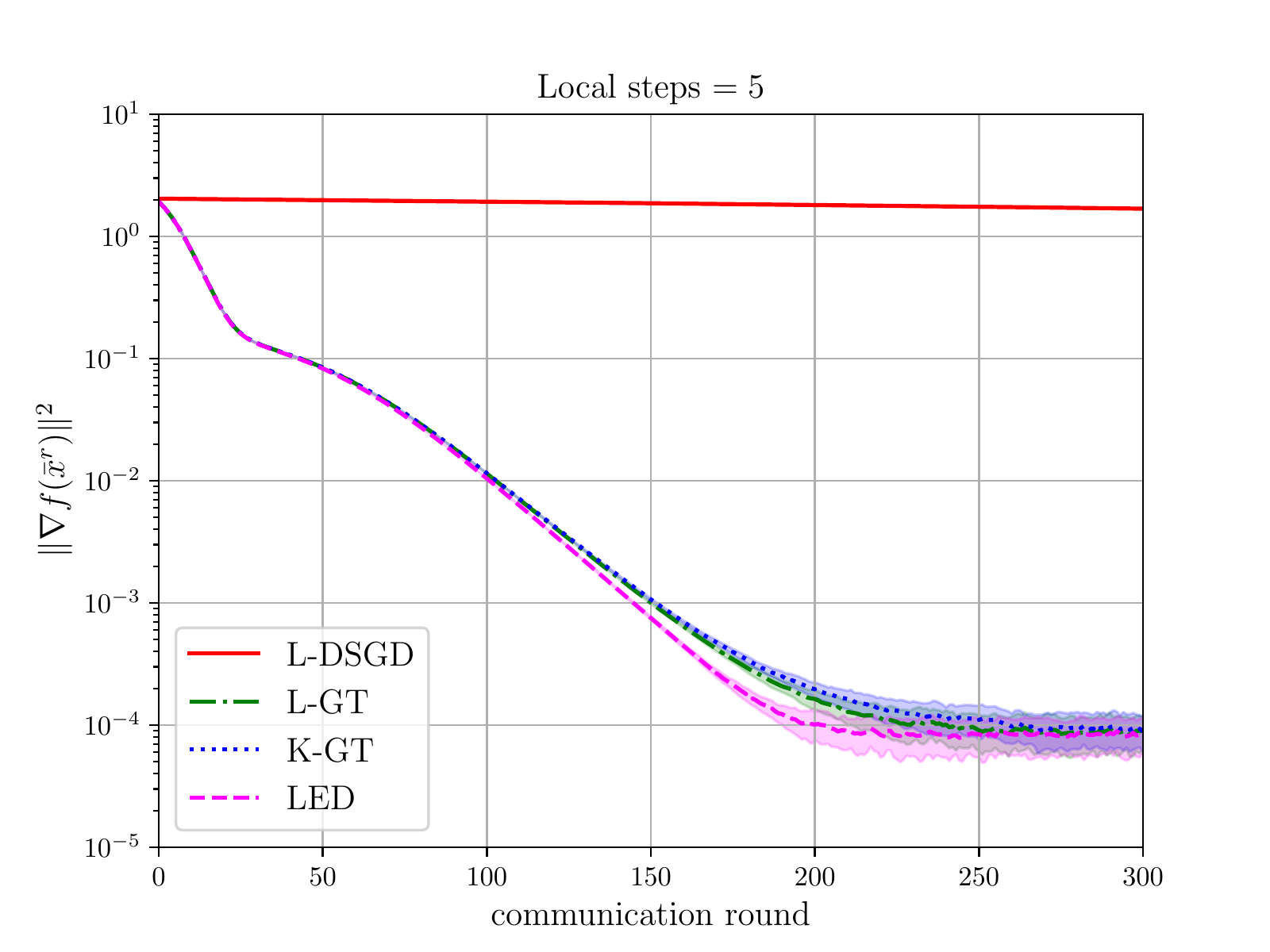}
			\includegraphics[scale=0.33]{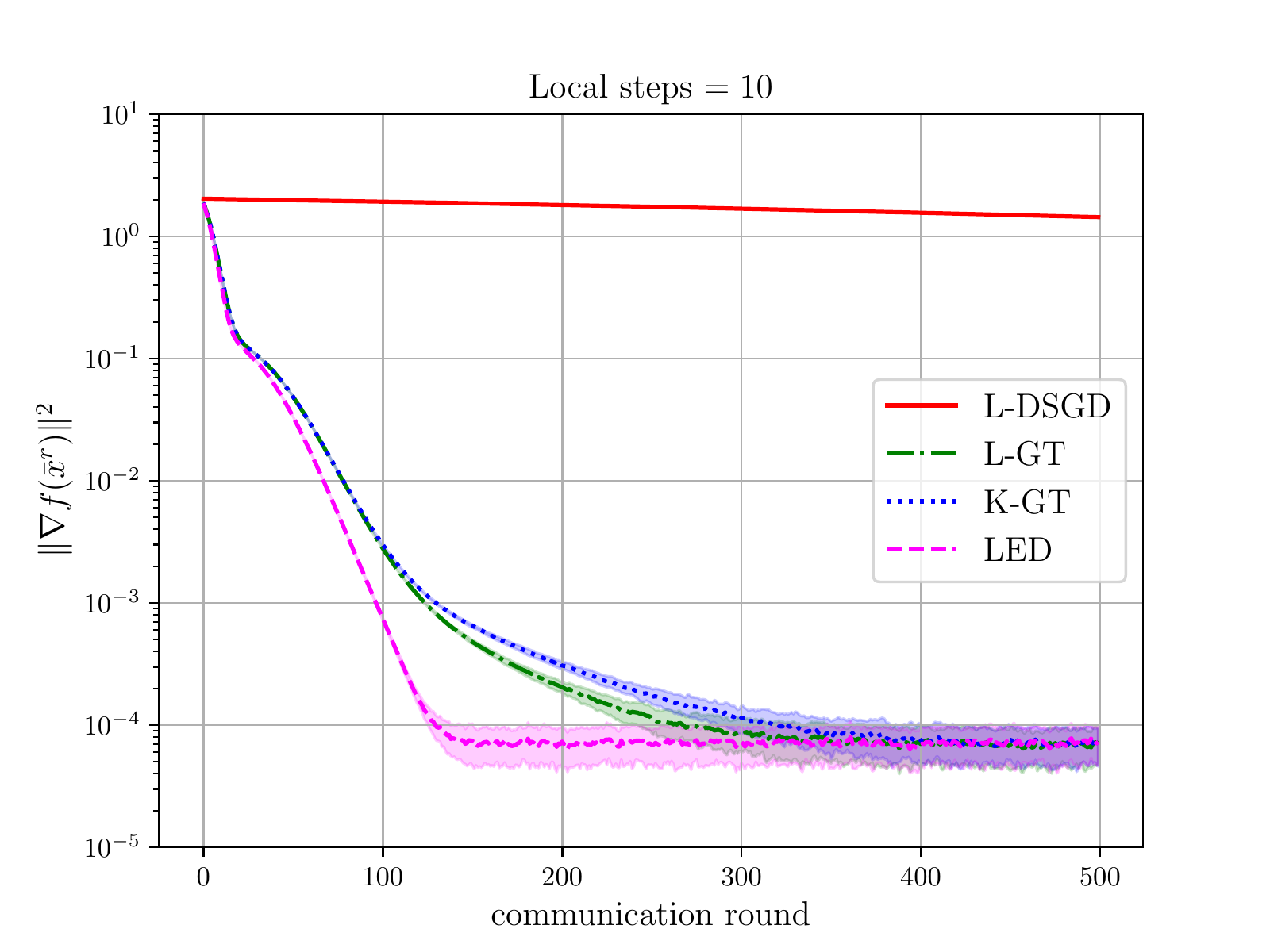}
		\end{center}
		\caption{Simulation results for decentralized methods to achieve an approximate error of $10^{-4}$ under various local steps (\alg{L-DSGD} \cite{wang2021cooperative,koloskova2020unified}, \alg{LGT}   \cite{nguyen2022performance}, and	\alg{$K$-GT}   \cite{liu2023decentralized}). We emphasize that \alg{LED} has half the communication cost of  \alg{GT} methods per communication round. }
		\label{fig:decentalized_sim}
	\end{figure}
	\begin{figure}[t]
		\begin{center}
			\includegraphics[scale=0.33]{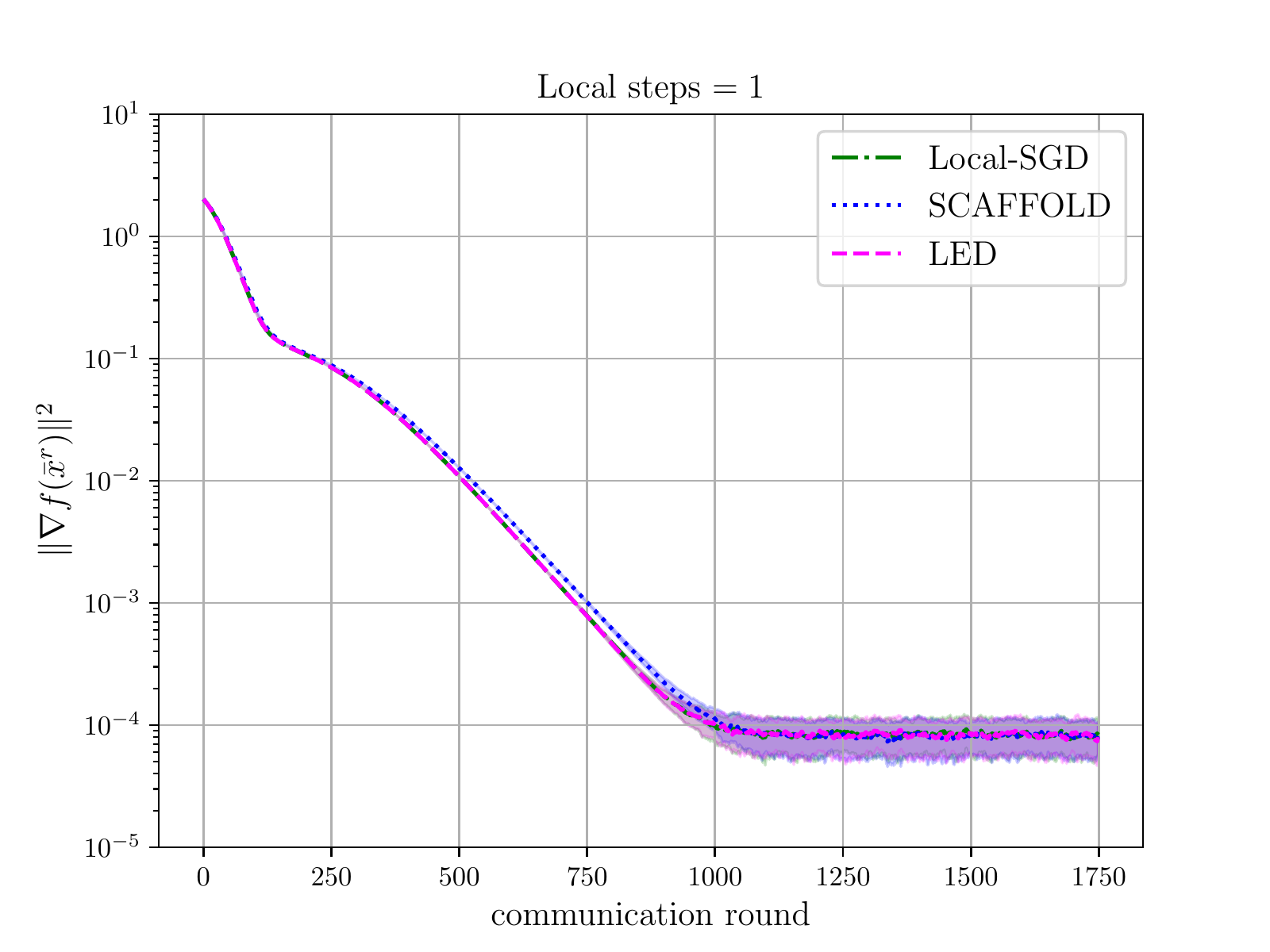}
			\includegraphics[scale=0.33]{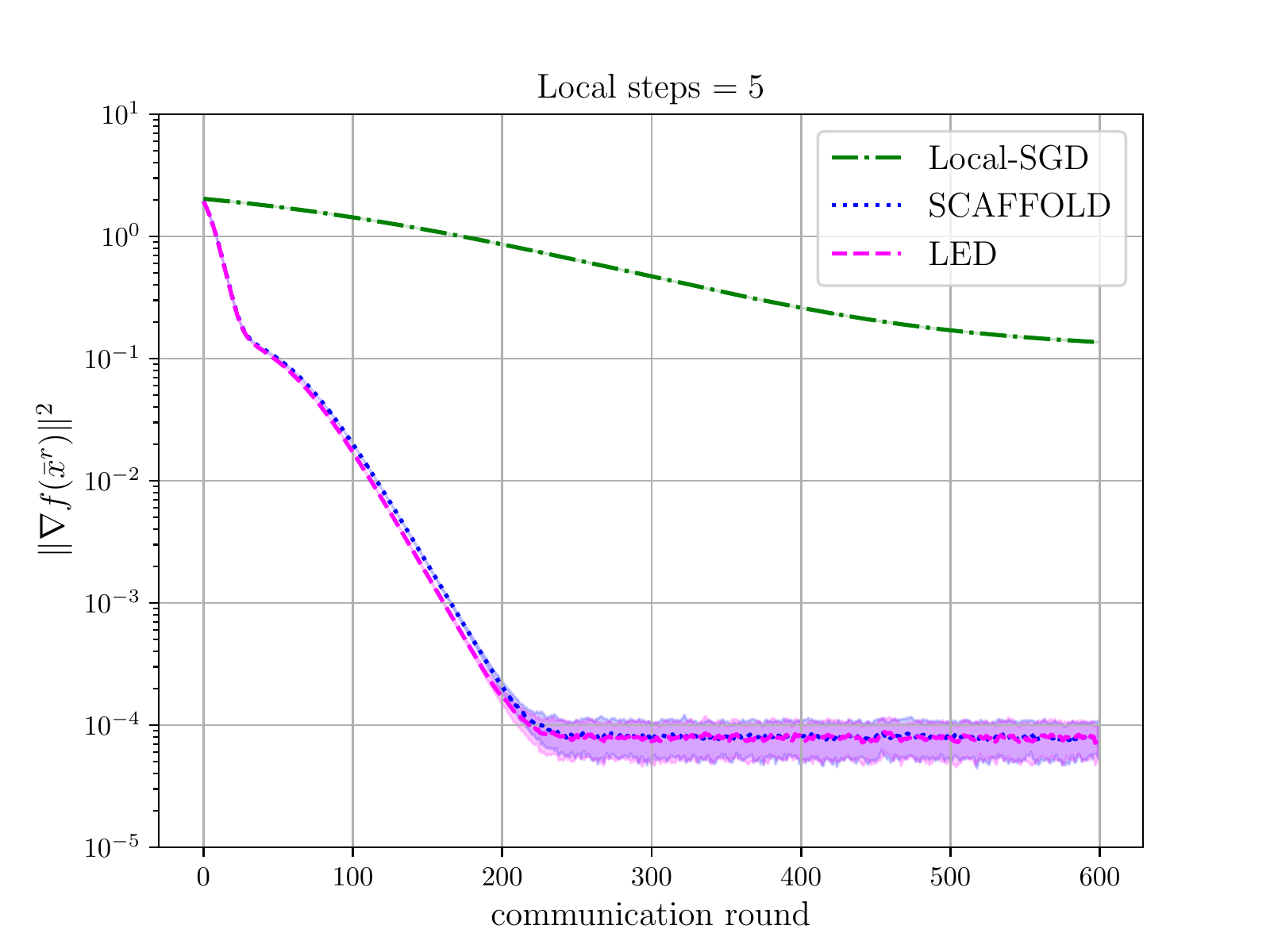}
			\includegraphics[scale=0.33]{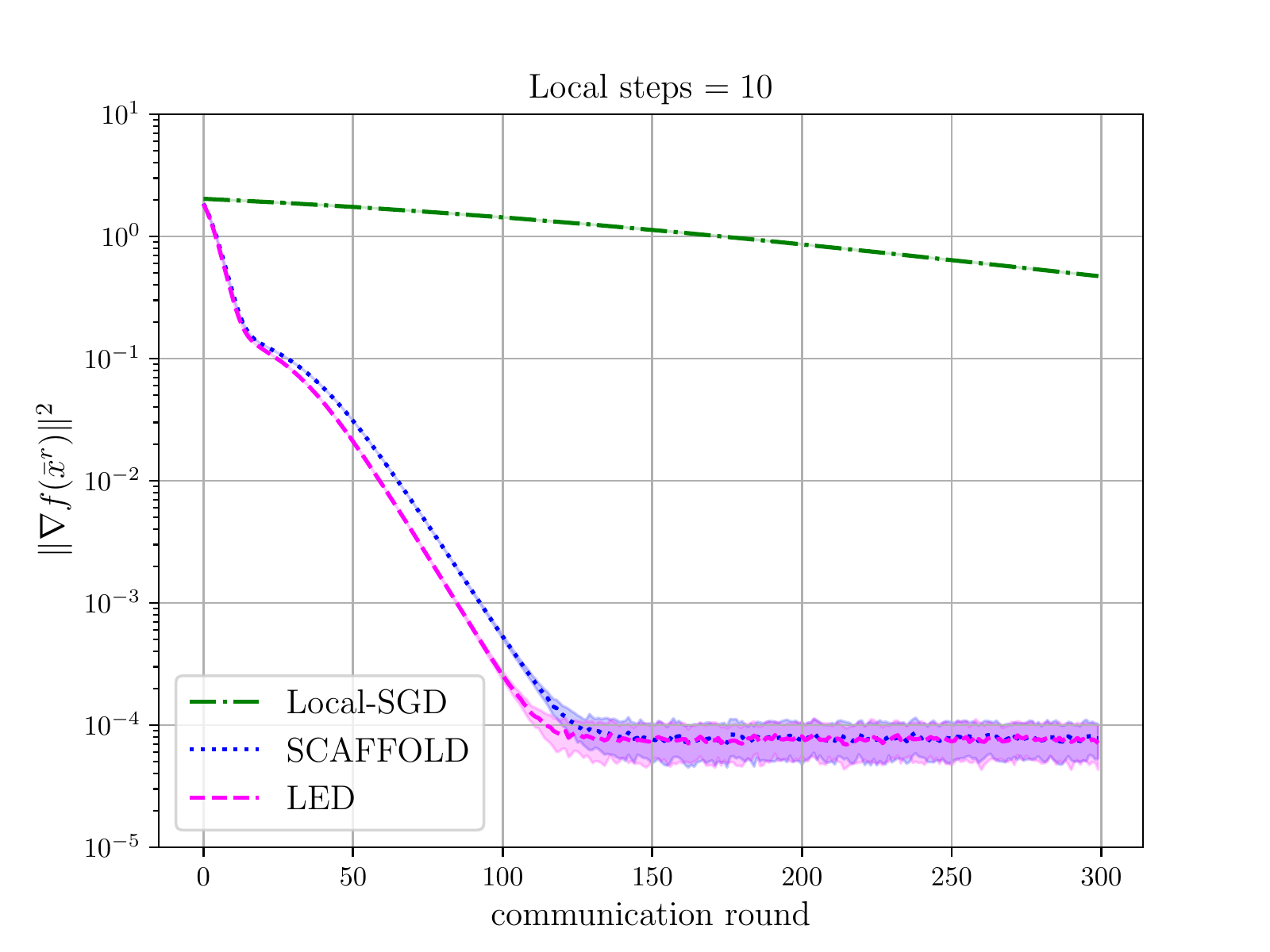}
		\end{center}
		\caption{Simulation results for centralized methods to achieve an approximate error of $10^{-4}$ under various local steps (\alg{SCAFFOLD} \cite{karimireddy2019scaffold}).  We emphasize  that \alg{LED} has half the communication cost of  \alg{SCAFFOLD}  per communication round. }
		\label{fig:centalized_sim}
	\end{figure}
	 
	\begin{figure}[t]
		\begin{subfigure}{\columnwidth}
		\begin{center}
			\includegraphics[scale=0.33]{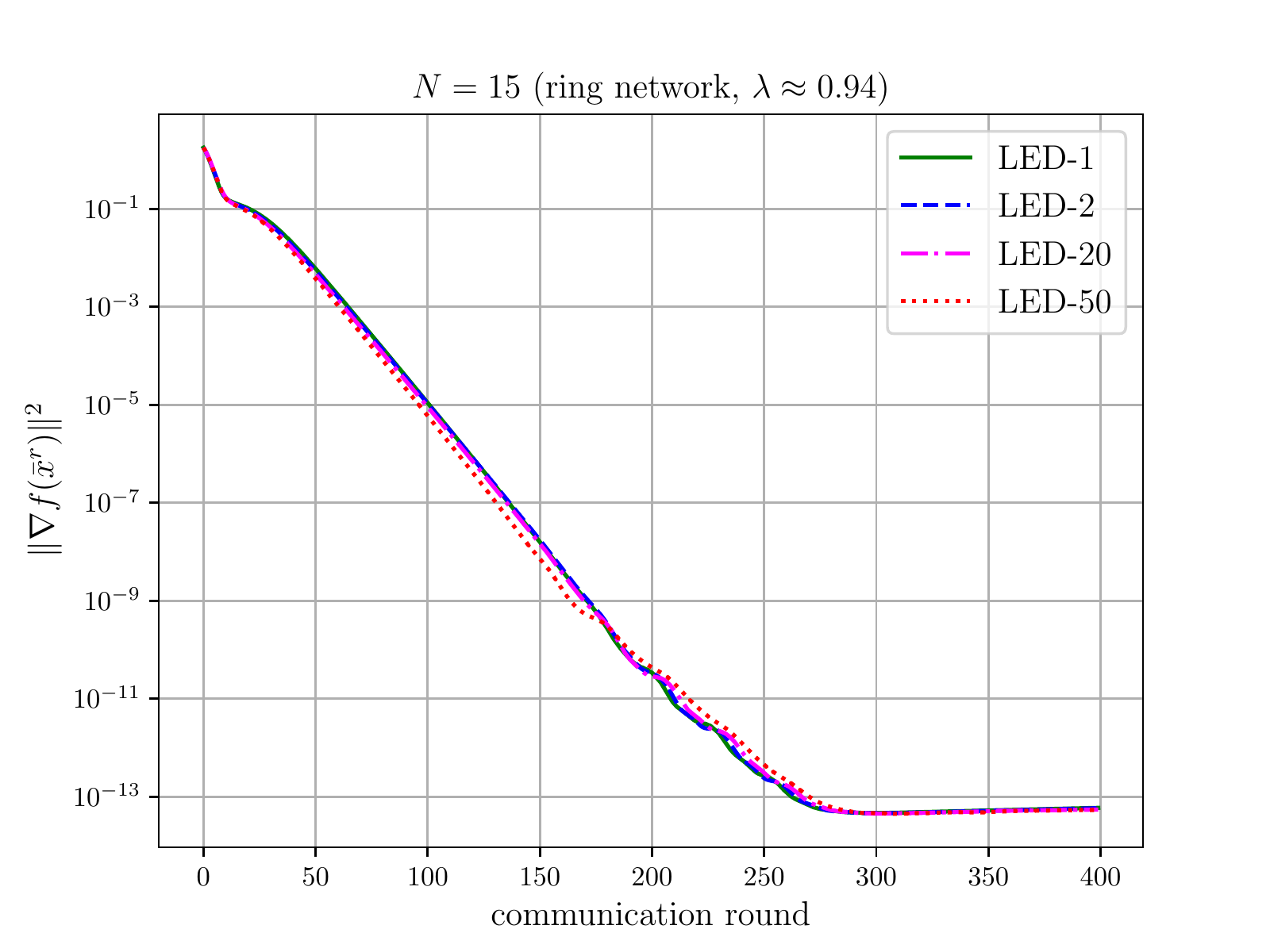}
			\includegraphics[scale=0.33]{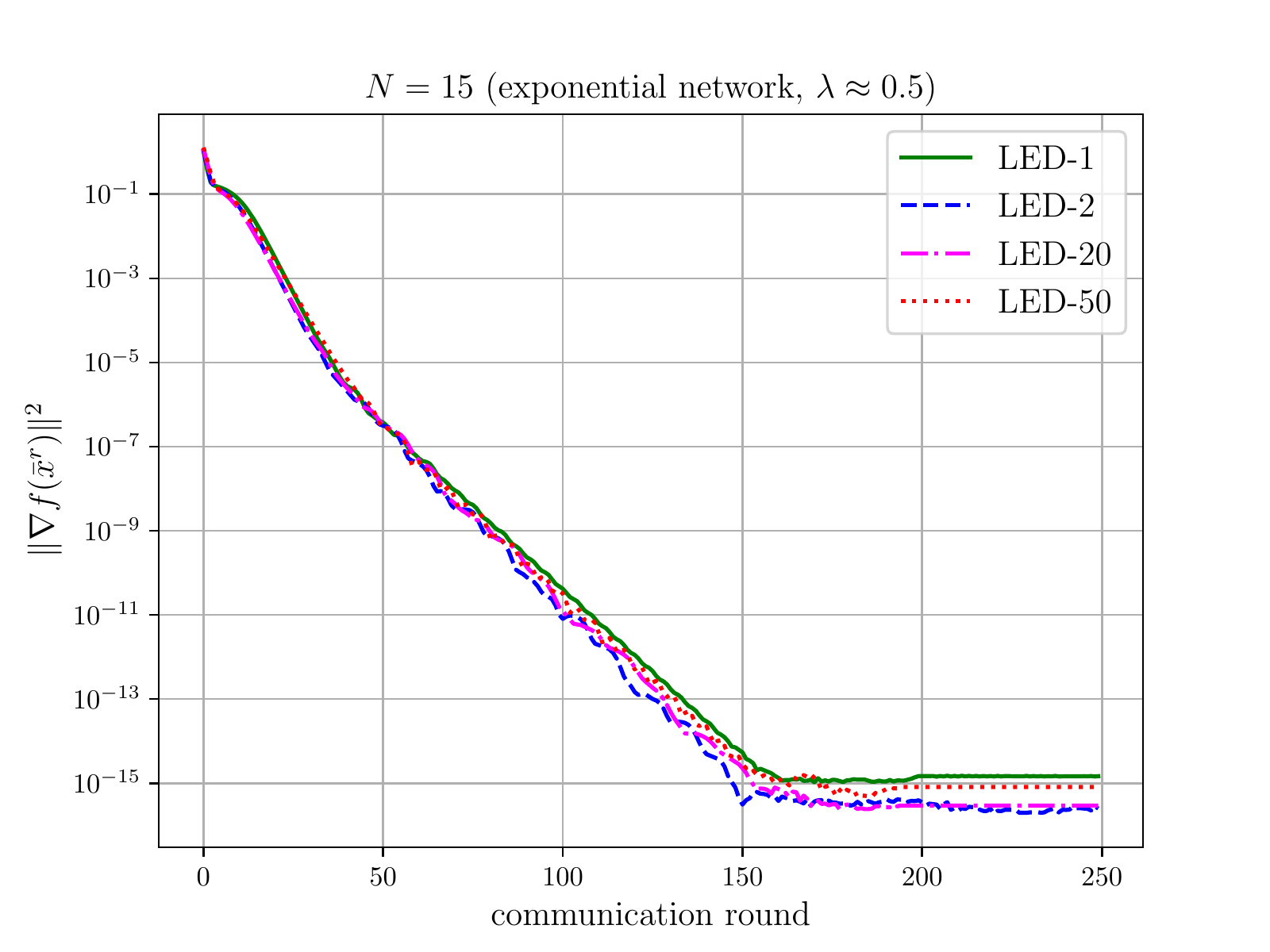}
			\includegraphics[scale=0.33]{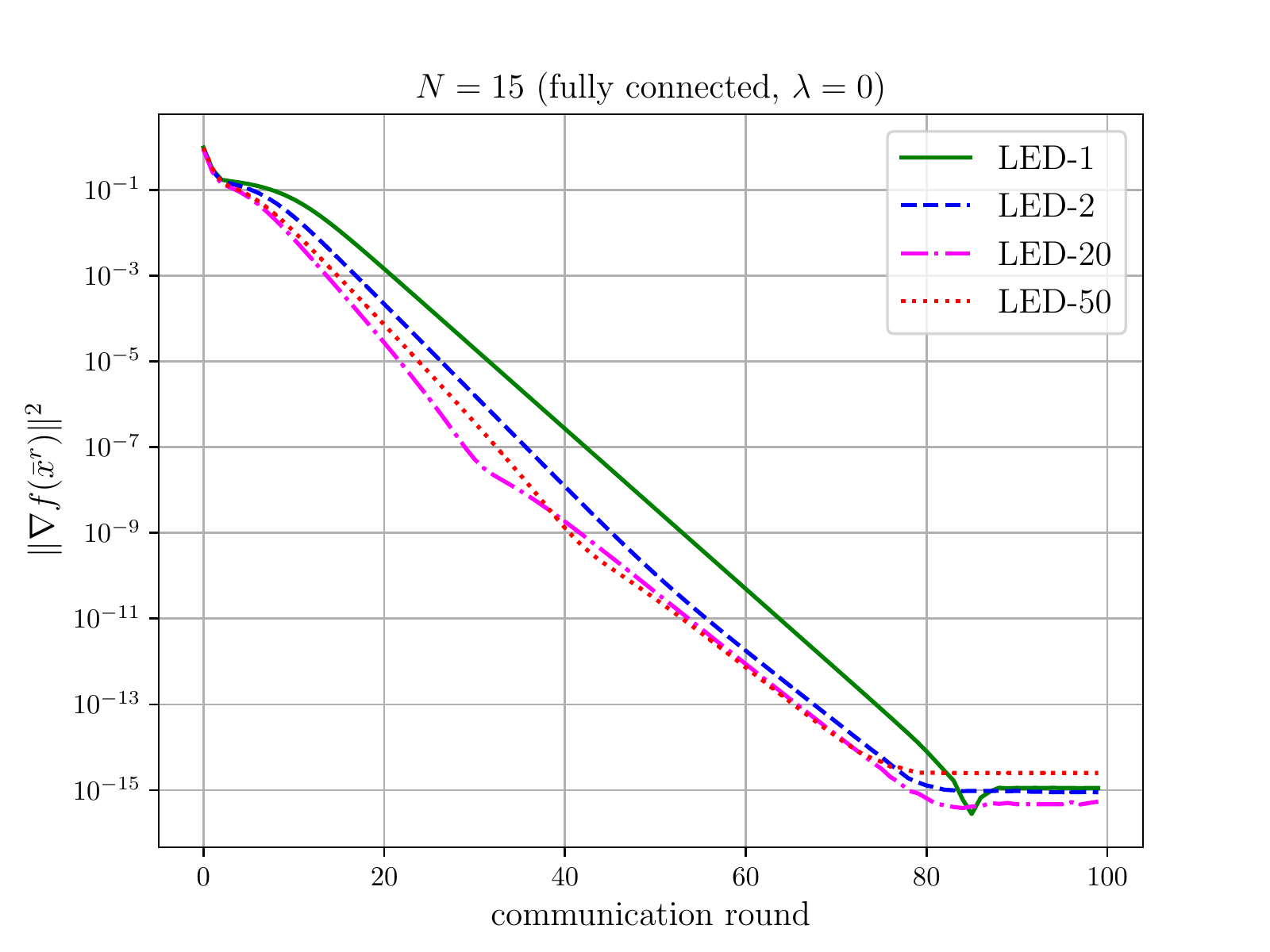}
		\end{center}
	\caption{Large data heterogeneity}
	\label{fig:led_sim_het}
	\end{subfigure}
		\begin{subfigure}{\columnwidth}
	\begin{center}
	\includegraphics[scale=0.33]{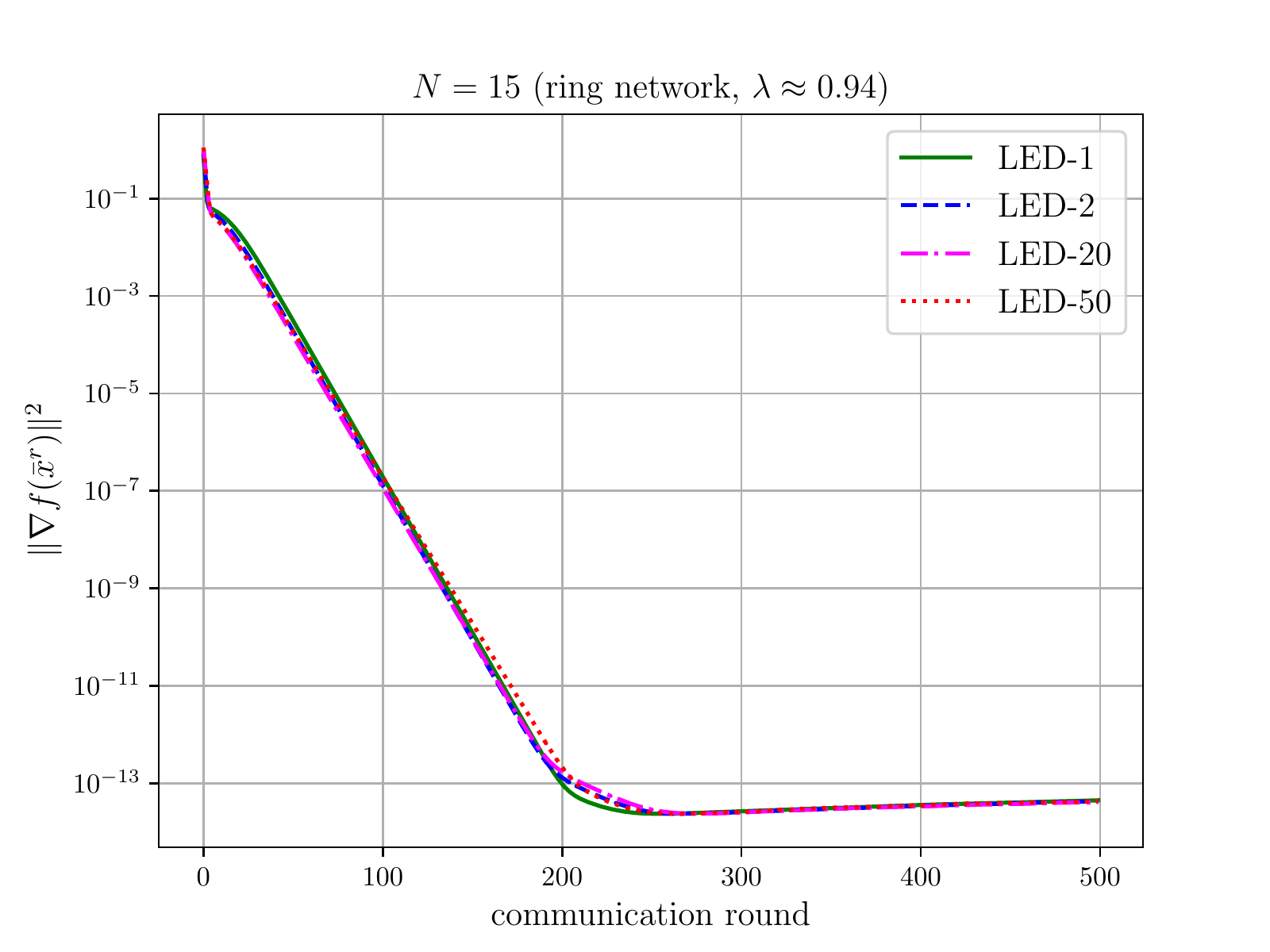}
	\includegraphics[scale=0.33]{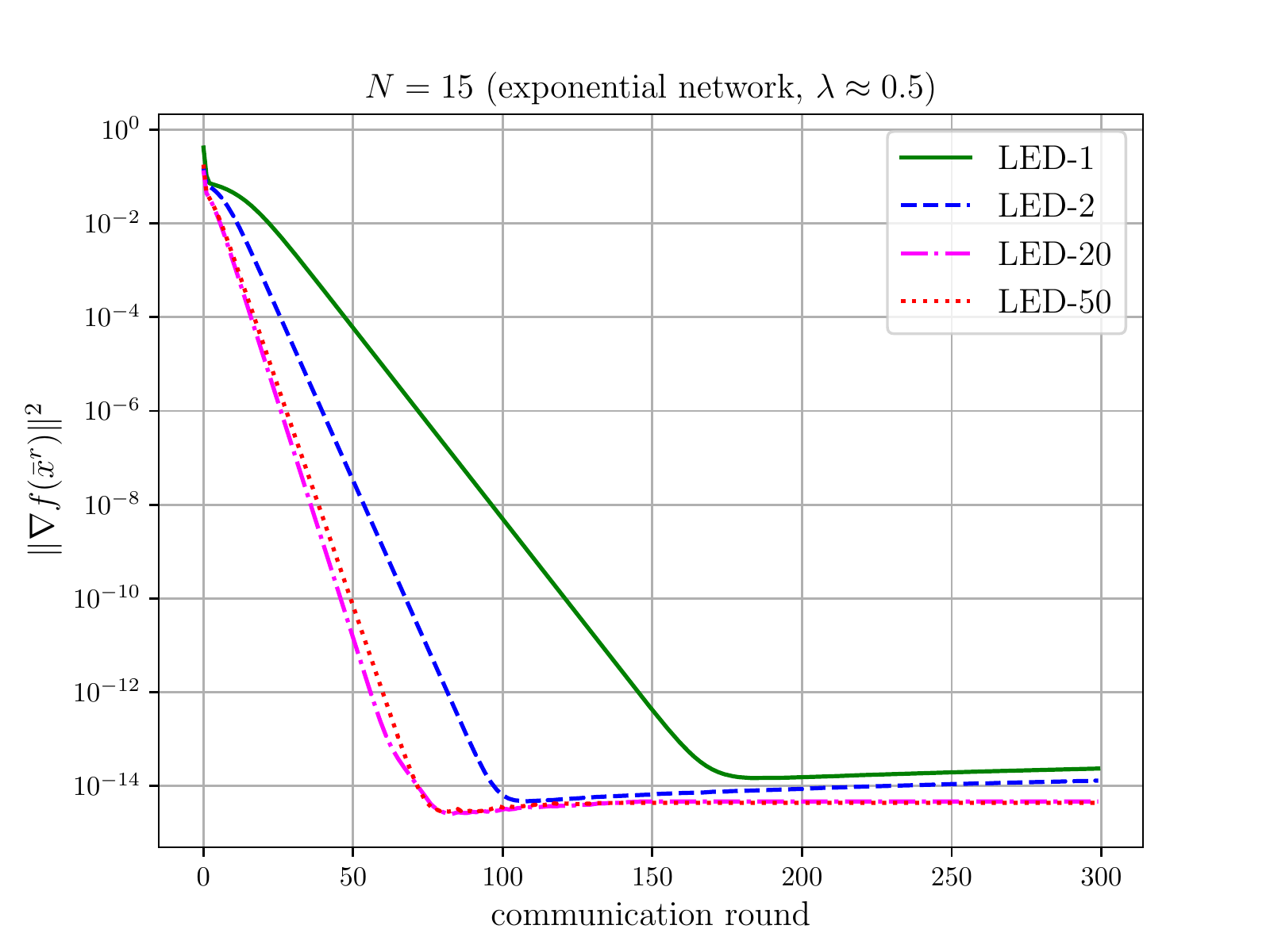}
	\includegraphics[scale=0.33]{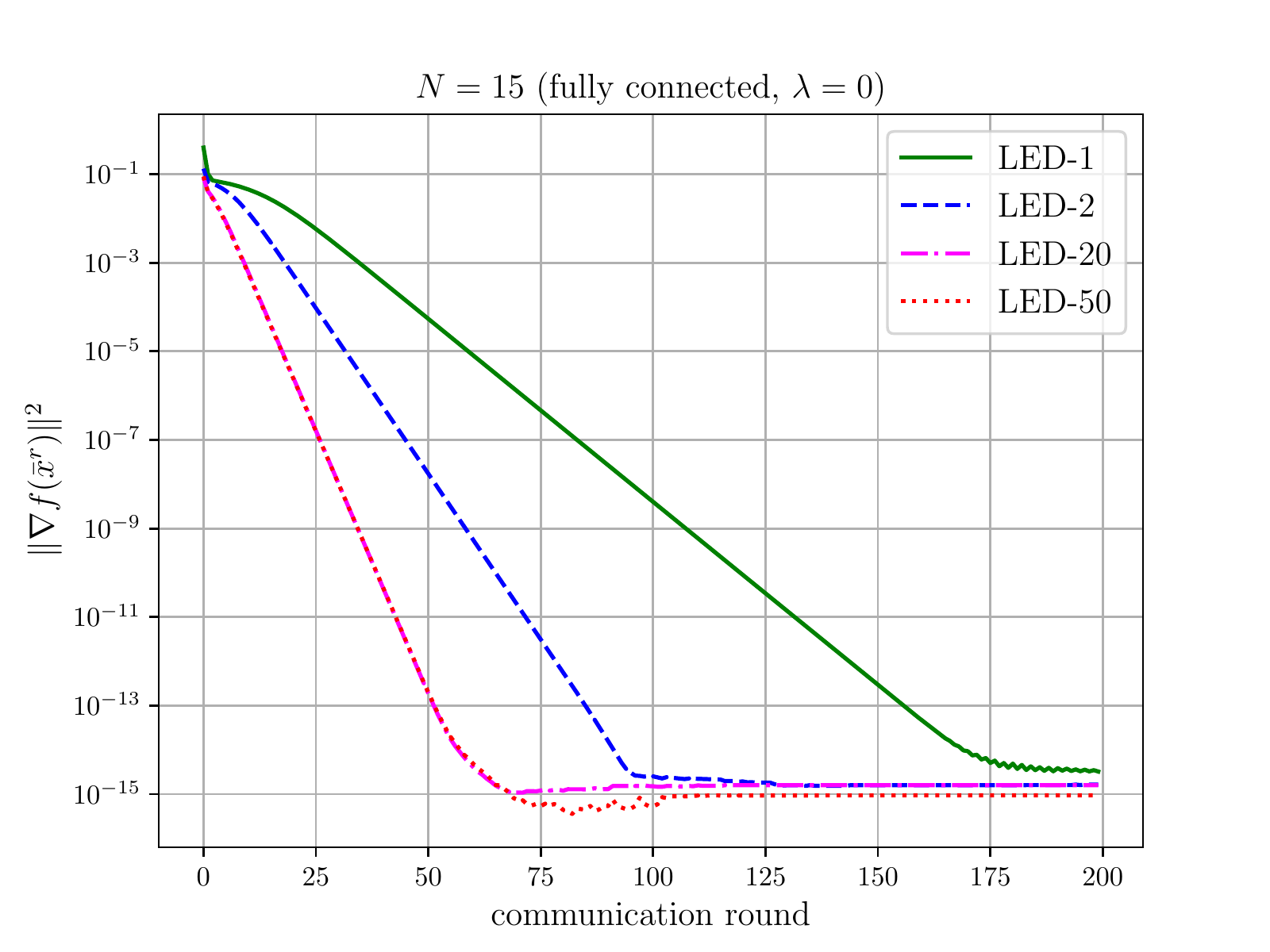}
\end{center}
\caption{Small data heterogeneity (similar data across the network)}
\label{fig:led_sim_iid}
\end{subfigure}
		\caption{\alg{LED} simulation results in the deterministic case (zero noise $\sigma=0$). In this Figure, \alg{LED-$\uptau$}  refers to \alg{LED} with $\uptau$ local updates. }
		\label{fig:led_sim}
	\end{figure}

\section{Concluding remarks} \label{sec_conc}
In this work, we proposed \alg{Local Exact-Diffusion} (\alg{LED}), a locally updated method inspired by the framework from \cite{alghunaim2019decentralized} and the Exact-Diffusion method \cite{yuan2019exactdiffI}. We demonstrated that \alg{LED} can be interpreted as the primal-dual method \alg{PDFP2O}/\alg{PAPC} from \cite{chen2013aprimal,drori2015simple}. We also explored its connection with the following methods: \alg{Exact-Diffusion} (\alg{ED}) \cite{yuan2019exactdiffI}, \alg{NIDS} \cite{li2017nids}, \alg{D$^2$} \cite{tang2018d}, \alg{Scaffnew} \cite{mishchenko2022proxskip}, \alg{VRL-SGD} \cite{liang2019variance}, and \alg{FedGate} \cite{haddadpour2021federated}. We proved the convergence of \alg{LED} in both convex and nonconvex settings and established bounds that offer improvements over existing decentralized methods. Finally, we provided numerical simulations to illustrate the effectiveness of the proposed algorithm.

A promising direction for future research involves examining \alg{LED} in the context of probabilistic local updates. It's worth exploring if the current analysis can be integrated with techniques from \alg{Scaffnew/ProxSkip} \cite{mishchenko2022proxskip} to understand the advantages of local steps in non-strongly-convex scenarios. Another potential avenue is broadening \alg{LED} to accommodate time-varying stochastic graphs. Furthermore, it would be insightful to see if decentralized local methods might also offer advantages for other network coupling constraints, as seen in multitask problems or the distributed feature problem.

\section*{Acknowledgments}
The author would like to thank Kun Yuan for his insightful discussions on parts of the manuscript.

%%--------References------------------------

\bibliographystyle{ieeetr}
\bibliography{myref_local_ed}

%%--------Appendices------------------------
	
	\newpage
	\appendices
	\section{Preliminary transformation}
In this section, we will convert \alg{LED} updates \eqref{ed_updates_network} into another form more suitable for our analysis. To that end, we will first introduce some notation that complements the notation \eqref{notation:network_main}. 
	\subsection{Notation}
The following quantities will be used in the analysis:
\begin{subequations} \label{notation_proof}
	\begin{align}
		\bar{\x}^r &\define \one_N \otimes \bar{x}^r, \quad 	\bar{x}^r  \define  \frac{1}{N} \sum\limits_{i=1}^N x_i^r, \\ 
		\overline{\grad f } (\x^r)  &\define \frac{1}{N} \sum\limits_{i=1}^N  \grad f_i (x_i^r) \\
		\s_t^r &\define \nabla \F(\bm{\Phi}^{r}_t;\bxi_t) - \nabla \f(\bm{\Phi}^{r}_{t}) \label{noise_def} \\
		\bar{s}_t^r &\define  \frac{1}{N} \sum\limits_{i=1}^N \left( \nabla F_i(\phi^{r}_{i,t};\xi_{i,t}) - \nabla f_i(\phi^{r}_{i,t}) \right),  \\
		\z^r &\define \y^r +\frac{\alpha}{\beta}  \grad \f(\bar{\x}^r),   \label{z_def} \\
		\bar{\z}^r &\define \one_N \otimes \bar{z}^r, \quad \bar{z}^r  \define  \frac{1}{N}\sum\limits_{i=1}^N z_i^r.
	\end{align}
\end{subequations}
\subsection{Weight matrix decomposition}
We will use the structure of the weight matrix $W$ in our analysis, which is a requirement  for our proof. As a result, we will now go over some facts about the matrix $W$. When  Assumption \ref{assump:network} holds, then the weight matrix $W$ can be decomposed as follows \cite{horn2012matrix,sayed2014nowbook}:
	\begin{align*}
		W= \begin{bmatrix}
			\frac{1}{\sqrt{N}}  \one & \widehat{Q}
		\end{bmatrix} \begin{bmatrix}
			1 & 0 \\
			0 & \widehat{\Lambda}
		\end{bmatrix} \begin{bmatrix}
			\frac{1}{\sqrt{N}} \one\tran \vspace{0.6mm} \\  \widehat{Q}\tran
		\end{bmatrix},
	\end{align*}
	where $\widehat{\Lambda}=\diag\{\lambda_i\}_{i=2}^N$ and the matrix $\widehat{Q}$ has size ${N \times (N-1)}$, and satisfies $\widehat{Q}\widehat{Q}\tran=I_N-\tfrac{1}{N} \one \one\tran$  and  $\one\tran \widehat{Q}=0$. It follows that $\W \define W \otimes I_m$ can be decomposed as
	\begin{subequations}  \label{W_QQtran}
		\begin{align}
			\W&=  \begin{bmatrix}
				\frac{1}{\sqrt{N}}   \one \otimes I_m & \widehat{\Q}
			\end{bmatrix} \begin{bmatrix}
				I_m & 0 \\
				0 & \widehat{\mathbf{\Lambda}}
			\end{bmatrix} \begin{bmatrix}
				\frac{1}{\sqrt{N}}  \one\tran \otimes I_m \\ \widehat{\Q}\tran
			\end{bmatrix}=\tfrac{1}{N} \one \one\tran \otimes I_m+\widehat{\Q} \widehat{\mathbf{\Lambda}} \widehat{\Q}\tran,
		\end{align}
	where $\widehat{\mathbf{\Lambda}} \define \widehat{\Lambda} \otimes I_m \in \real^{m(N-1)\times m(N-1)}$ and $\widehat{\Q}\define \widehat{Q} \otimes I_m \in \real^{m N \times m(N-1)}$ satisfies:
	\begin{align}
		\widehat{\Q}\tran \widehat{\Q}=\I, \quad \widehat{\Q}\widehat{\Q}\tran=\I-\tfrac{1}{N} \one \one\tran \otimes I_m, \quad (\one\tran \otimes I_m) \widehat{\Q}=0.
	\end{align}
\end{subequations}
Below, we provide a list of relevant facts and properties concerning the aforementioned decomposition.
\begin{itemize}

\item  It holds that
\begin{align} \label{Q_matrix_bound}
		\|\widehat{\Q}\tran \x^r \|^2= \|\widehat{\Q} \widehat{\Q}\tran \x^r \|^2= \|\x^r - \bar{\x}^r\|^2,  \quad \text{and} \quad \|\widehat{\Q}\|=1.
\end{align}

	\item The matrix $\widehat{\B} \define \I-\widehat{\mathbf{\Lambda}}$	satisfies
	\begin{align} \label{B_matrix_bound}
		\|\widehat{\B}\| = 1-\underline{\lambda}  \leq 1 \quad \text{and} \quad  \|\widehat{\B}^{-1}\|=\frac{1}{1-\lambda},
	\end{align}
where $\underline{\lambda}$ is the smallest nonzero eigenvalue of $W$ and $\lambda$ is the network's mixing rate introduced in \eqref{mixing_rate}.
\end{itemize}

	\subsection{Transformed recursion}
To obtain our result, we will perform a series of transformations that will eventually  lead us to the critical result specified in Lemma \ref{lemma:transf_avg_hat}. In order to study the communication complexity of \alg{LED}, we begin by representing it in terms of communication rounds.  It holds true when iterating through \eqref{local_ed_updates} updates:
	\begin{align*}
		\bm{\Phi}^{r}_{\uptau} &=    \x^{r}-\alpha \sum_{t=0}^{\uptau-1} \nabla \F(\bm{\Phi}^{r}_{t};\bxi_{t}) - \beta  \uptau \y^r,
	\end{align*}
	where $\bm{\Phi}^r_{0}=\x^r$ and, for notational simplicity, we are removing the superscript in $\bxi_{t}^r$.	Substituting the preceding into \eqref{local_ed_comm_round} yields
	\begin{subequations} \label{x_and_y_r_update}
		\begin{align}
			\x^{r+1} &=   \W \left(\x^{r}  - \beta \uptau \y^r - \alpha \sum_{t=0}^{\uptau-1} \nabla \F(\bm{\Phi}^{r}_{t};\bxi_t)  \right) \label{x_r_upd} \\
			\y^{r+1} &= \y^{r}+  (\I-\W)   \left(\x^{r}- \beta \uptau \y^r  - \alpha \sum_{t=0}^{\uptau-1} \nabla \F(\bm{\Phi}^{r}_t;\bxi_t)   \right).
		\end{align}
	\end{subequations}
	Observe that under our initialization, $\y^{0}=(\I-\W) \x^0$, $\y^r$ will always be in the range of $\I-\W$. As a result, we have $(\one\tran \otimes I_m) \y^r =0$ for all $r$. Multiplying both sides of \eqref{x_r_upd} by $(1/N) (\one\tran \otimes I_m)$ on the left and using the definitions in \eqref{notation_proof} , namely $	\overline{\grad f } (\x^r)  = \frac{1}{N} \sum_{i=1}^N  \grad f_i (x_i^r),
\bar{s}_t^r =  \frac{1}{N} \sum_{i=1}^N ( \nabla F_i(\phi^{r}_{i,t};\xi_{i,t}) - \nabla f_i(\phi^{r}_{i,t}) )$],  we get
	\begin{align} 
			\bar{x}^{r+1}&=\bar{x}^r -  \alpha \sum_{t=0}^{\uptau-1} \big( \overline{\nabla f}(\bm{\Phi}^r_t) + \bar{s}_t^r  \big). \label{error_avg_progress}
	\end{align}
	Equation \eqref{error_avg_progress} above describes how the average (centroid) vector evolves in terms of communication rounds, which will be important in our analysis. Now,
	using the definitions in \eqref{notation_proof}, namely $\z^r = \y^r +\frac{\alpha}{\beta}  \grad \f(\bar{\x}^r)$ and $	\s_t^r = \nabla \F(\bm{\Phi}^{r}_t;\bxi_t) - \nabla \f(\bm{\Phi}^{r}_{t})$,   the update \eqref{x_and_y_r_update} can be equivalently described  as
	\begin{subequations} \label{x_and_z_r_update}
		\begin{align}
			\x^{r+1} &=   \W \left(\x^{r}  - \beta \uptau \z^r  - \alpha \sum_{t=0}^{\uptau-1} \big(\nabla \f(\bm{\Phi}^{r}_{t}) - \grad \f(\bar{\x}^r) + \s_t^r\big) \right) \label{x_r_update}   \\
			\z^{r+1} &= [(1- \beta \uptau) \I + \beta \uptau \W]  \z^{r}+  (\I-\W)  \x^{r}  - \alpha  (\I-\W)  \sum_{t=0}^{\uptau-1} \big(\nabla \f(\bm{\Phi}^{r}_{t} ) -\grad \f(\bar{\x}^{r}) + \s_t^r \big)
			\nonumber \\
			& \quad +  \frac{\alpha}{\beta}  \big(\grad \f(\bar{\x}^{r+1})-\grad \f(\bar{\x}^{r}) \big) \label{z_r_update}.  
		\end{align}
	\end{subequations}
\begin{remark}[\sc \small Deviation from Average] \rm
	The introduction of $\z^r$ is inspired from \cite{alghunaim2021unified}. The quantity $\z^r$ can be interpreted as a variable that tracks the average gradient vector $\one \otimes \grad f(\bar{x}^r)$. Observe that by $(\one\tran \otimes I_m) \y^r =0$ and \eqref{z_def}, we have
	\begin{align} \label{avg_z_grad}
		\bar{z}^r= (1/N) (\one\tran \otimes I_m) \z^r = \frac{\alpha  }{\beta N} \sum_{i=1}^N \grad f_i(\bar{x}^r).
	\end{align}
We will next transform the updates \eqref{x_and_z_r_update} into  quantities that measure how far	$\x^{r}$ and $\z^{r}$ deviate from the averages $\bar{\x}^r = \one_N \otimes \bar{x}^r$ and $\bar{\z}^r \define \one_N \otimes \bar{z}^r $, respectively. To do so, we will leverage the structure and properties of the weight matrix $\W$ \eqref{Q_matrix_bound}.  \qd
\end{remark}
 Using the decomposition of $\W$ given in \eqref{W_QQtran}, it holds that $\widehat{\Q}\tran \W=\widehat{\mathbf{\Lambda}} \widehat{\Q}\tran$.
	Therefore, multiplying both sides of \eqref{x_and_z_r_update} by $\widehat{\Q}\tran$, it holds that
	\begin{subequations} \label{trans_updates3r}
		\begin{align}    
			\widehat{\Q}\tran \x^{r+1} &=  \widehat{\mathbf{\Lambda}}  \widehat{\Q}\tran \x^{r}-  \beta \uptau \widehat{\mathbf{\Lambda}}  \widehat{\Q}\tran \z^{r}  -\alpha  \widehat{\mathbf{\Lambda}} \widehat{\Q}\tran   \sum_{t=0}^{\uptau-1} \big(\nabla \f(\bm{\Phi}^{r}_{t}) - \grad \f(\bar{\x}^r) + \s_t^r \big)    \\
			\widehat{\Q}\tran \z^{r+1} &= [(1- \beta \uptau)\I +  \beta \uptau \widehat{\mathbf{\Lambda}} ] \widehat{\Q}\tran \z^{r}+  (\I-\widehat{\mathbf{\Lambda}})  \widehat{\Q}\tran \x^{r} 
			\nonumber \\
			& \quad -\alpha  (\I-\widehat{\mathbf{\Lambda}}) \widehat{\Q}\tran  \sum_{t=0}^{\uptau-1} \big(\nabla \f(\bm{\Phi}^{r}_{t}) - \grad \f(\bar{\x}^r) + \s_t^r \big)
			+ \frac{\alpha}{\beta}
			\widehat{\Q}\tran \big(\grad \f(\bar{\x}^{r+1})-\grad \f(\bar{\x}^{r})\big).
		\end{align}
	\end{subequations}
	Rewriting  	\eqref{trans_updates3r} in matrix notation, we have
	\begin{align}  \label{trans_update_matrix_r}
		\begin{bmatrix}
			\widehat{\Q}\tran	\x^{r+1} \\
			\widehat{\Q}\tran	\z^{r+1}
		\end{bmatrix} 
		&= 
		\begin{bmatrix}
			\widehat{\mathbf{\Lambda}} & -\beta \uptau  \widehat{\mathbf{\Lambda}} \\
			\I-\widehat{\mathbf{\Lambda}} & (1- \beta \uptau)\I+ \beta \uptau \widehat{\mathbf{\Lambda}}
		\end{bmatrix} 
		\begin{bmatrix}
			\widehat{\Q}\tran	\x^{r} \\
			\widehat{\Q}\tran	\z^{r}
		\end{bmatrix} \nonumber \\
		& \quad
		- \alpha \begin{bmatrix}
			\widehat{\mathbf{\Lambda}}  \widehat{\Q}\tran \sum\limits_{t=0}^{\uptau-1} \big(\nabla \f(\bm{\Phi}^{r}_{t}) - \grad \f(\bar{\x}^r) + \s_t^r \big) \\
			(\I-\widehat{\mathbf{\Lambda}})  \widehat{\Q}\tran \sum\limits_{t=0}^{\uptau-1} \big(\nabla \f(\bm{\Phi}^{r}_{t}) - \grad \f(\bar{\x}^r) + \s_t^r \big) - \frac{1}{\beta}  \widehat{\Q}\tran \big(\grad \f(\bar{\x}^{r+1})-\grad \f(\bar{\x}^{r}) \big)
		\end{bmatrix}.
	\end{align}
Note that from \eqref{Q_matrix_bound}, we have $\|\widehat{\Q}\tran \x^r \|^2=\|\x^r - \bar{\x}^r\|^2$ and similarly $\|\widehat{\Q}\tran \z^r \|^2=\|\z^r - \bar{\z}^r\|^2$. Thus, the above describes how the nodes vectors deviates from the averages.	For $\beta=1/\uptau$, we will let
\begin{align} \label{matrix_cons}
 \D \define 	\begin{bmatrix}
		\widehat{\mathbf{\Lambda}} & -\beta \uptau  \widehat{\mathbf{\Lambda}} \\
		\I-\widehat{\mathbf{\Lambda}} & (1- \beta \uptau)\I+ \beta \uptau \widehat{\mathbf{\Lambda}}
	\end{bmatrix} &=\begin{bmatrix}
		\widehat{\mathbf{\Lambda}} & -  \widehat{\mathbf{\Lambda}} \\
		\I-\widehat{\mathbf{\Lambda}} &  ~~\widehat{\mathbf{\Lambda}}
	\end{bmatrix}  = \begin{bmatrix}
	\widehat{\Lambda} & -  \widehat{\Lambda} \\
	I-\widehat{\Lambda} &  ~~\widehat{\Lambda}
\end{bmatrix} \otimes I_m.
\end{align}
If the norm of the matrix $\D$ is less than one $\|\D \| <1$, then the updates \eqref{trans_update_matrix_r} can be used to directly measure the deviation from the averages. However, even though the eigenvalues of $\D$ are less than one, its norm is not guaranteed to satisfy $\|\D\| < 1$; but, we can decompose $\D$ and transform \eqref{trans_update_matrix_r} into a more suitable form for our analysis, as shown in the important result below.
	\begin{lemma}[\sc \small Deviation from average] \label{lemma:transf_avg_hat}
		\rm Suppose that Assumption \ref{assump:network} holds and  $\beta=1/\uptau$, then
			\begin{align}
				\widehat{\d}^{r+1}&=\mathbf{\Delta} \widehat{\d}^{r} - \alpha \widehat{\V}^{-1}   \begin{bmatrix}
					\widehat{\mathbf{\Lambda}}  \widehat{\Q}\tran \sum\limits_{t=0}^{\uptau-1} \big(\nabla \f(\bm{\Phi}^r_t) - \grad \f(\bar{\x}^r) + \s_t^r \big) \\
					\widehat{\B}  \widehat{\Q}\tran \sum\limits_{t=0}^{\uptau-1} \big(\nabla \f(\bm{\Phi}^r_{t}) - \grad \f(\bar{\x}^r) + \s_t^r \big) -  \uptau    \widehat{\Q}\tran \big(\grad \f(\bar{\x}^{r+1})-\grad \f(\bar{\x}^{r})\big)
				\end{bmatrix}, \label{error_check_progress}
			\end{align}
		where 	$\mathbf{\Delta}$ is a matrix with norm $\delta \define  \|\mathbf{\Delta}\|=\sqrt{\lambda} < 1$,  $\widehat{\B} \define \I-\widehat{\mathbf{\Lambda}}$,   and
	\begin{subequations}
			\begin{align} \label{e_hat_def}
			\widehat{\d}^{r} &\define  \widehat{\V}^{-1} \begin{bmatrix}
				\widehat{\Q}\tran\x^{r} \\
				\widehat{\Q}\tran\z^{r}
			\end{bmatrix} \\
		\widehat{\V}^{-1}
		& \define \P \begin{bmatrix}
			\frac{1}{2 } \widehat{\mathbf{\Lambda}}^{-\frac{1}{2}}  & \frac{\jmath}{2} \widehat{\B}^{-\frac{1}{2}}  \\
			\frac{1}{2} \widehat{\mathbf{\Lambda}}^{-\frac{1}{2}}    & -  \frac{\jmath}{2} \widehat{\B}^{-\frac{1}{2}}
		\end{bmatrix} , \quad 
		\widehat{\V} \define \begin{bmatrix}
		\widehat{\mathbf{\Lambda}}^{\frac{1}{2}}  & 	\widehat{\mathbf{\Lambda}}^{\frac{1}{2}} \\
		-\jmath  \widehat{\B}^{\frac{1}{2}}   &   	\jmath  \widehat{\B}^{\frac{1}{2}} 
	\end{bmatrix} \P\tran.  \label{V_structure}
	\end{align}
	\end{subequations}
Here, $\P$ is some permutation matrix and $\jmath=\sqrt{-1}$ is the imaginary number.
	\end{lemma}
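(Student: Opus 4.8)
The plan is to diagonalize the non-symmetric coefficient matrix $\D$ appearing in \eqref{trans_update_matrix_r} by an explicit, eigenvalue-wise similarity transform, and then read off its spectral radius. Fixing $\beta=1/\uptau$ collapses that coefficient matrix to $\D=\big[\begin{smallmatrix}\widehat{\Lambda}&-\widehat{\Lambda}\\ I-\widehat{\Lambda}&\widehat{\Lambda}\end{smallmatrix}\big]\otimes I_m$ as in \eqref{matrix_cons}. Since $\widehat{\mathbf{\Lambda}}$ is diagonal, the entire problem decouples across the eigenvalues $\{\lambda_i\}_{i=2}^N$ of $W$, so I would first analyze the $2\times2$ block $D_i=\big[\begin{smallmatrix}\lambda_i&-\lambda_i\\ 1-\lambda_i&\lambda_i\end{smallmatrix}\big]$ for a single index $i$ and then reassemble.

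Second, I would compute the spectrum of $D_i$: its characteristic equation yields eigenvalues $\lambda_i\mp\jmath\sqrt{\lambda_i(1-\lambda_i)}$, whose modulus is $\sqrt{\lambda_i^2+\lambda_i(1-\lambda_i)}=\sqrt{\lambda_i}$. Here Assumption \ref{assump:network} ($W\succ0$, hence $0<\lambda_i<1$) is essential: it guarantees $\sqrt{\lambda_i}$ is real and strictly below one. The matching eigenvectors are $[\sqrt{\lambda_i};\ \mp\jmath\sqrt{1-\lambda_i}]$; stacking these over $i$ and over the $m$ coordinates reproduces exactly the block-column structure of $\widehat{\V}$ in \eqref{V_structure}, with the permutation $\P$ merely reconciling the eigenvalue-wise layout with the stacked $(\widehat{\Q}\tran\x,\widehat{\Q}\tran\z)$ ordering.

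Third, I would verify that $\widehat{\V}^{-1}$ has the stated form by inverting each $2\times2$ eigenvector block (a one-line computation whose determinant is $2\jmath\sqrt{\lambda_i(1-\lambda_i)}$), confirming the $\tfrac12\widehat{\mathbf{\Lambda}}^{-1/2}$ and $\pm\tfrac{\jmath}{2}\widehat{\B}^{-1/2}$ entries with $\widehat{\B}=\I-\widehat{\mathbf{\Lambda}}$. Left-multiplying \eqref{trans_update_matrix_r} by $\widehat{\V}^{-1}$ and setting $\widehat{\d}^{r}\define\widehat{\V}^{-1}[\widehat{\Q}\tran\x^r;\widehat{\Q}\tran\z^r]$ then turns the homogeneous part into $\widehat{\V}^{-1}\D\widehat{\V}\,\widehat{\d}^r=\mathbf{\Delta}\,\widehat{\d}^r$, while $1/\beta=\uptau$ converts the last forcing term into $-\uptau\,\widehat{\Q}\tran(\grad\f(\bar{\x}^{r+1})-\grad\f(\bar{\x}^{r}))$, reproducing \eqref{error_check_progress}.

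Finally, since $\mathbf{\Delta}=\widehat{\V}^{-1}\D\widehat{\V}$ is diagonal, hence normal, its operator norm equals its spectral radius, giving $\|\mathbf{\Delta}\|=\max_{i\ge2}\sqrt{\lambda_i}=\sqrt{\lambda}=\delta<1$. The main obstacle is conceptual rather than computational: $\D$ is non-normal and can have $\|\D\|>1$ even though all of its eigenvalues have modulus $\sqrt{\lambda_i}<1$, so a direct bound on $\|\D\|$ would be useless for proving contraction. The whole purpose of the lemma is to exhibit the explicit diagonalizer $\widehat{\V}$ that replaces $\D$ by the normal matrix $\mathbf{\Delta}$ whose norm is exactly the spectral radius; the delicate part is carrying the $\jmath$ factors and the $\widehat{\mathbf{\Lambda}}^{-1/2},\widehat{\B}^{-1/2}$ scalings through $\widehat{\V}^{-1}$ correctly, and tracking how this similarity reshapes the heterogeneity and stochastic forcing terms into the form displayed in \eqref{error_check_progress}.
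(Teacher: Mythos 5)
Your proposal is correct and follows essentially the same route as the paper's proof: permute $\D$ into $2\times 2$ blocks $D_i$, diagonalize each block explicitly via the complex eigenpairs of modulus $\sqrt{\lambda_i}$ (using $W\succ 0$ so that $0<\lambda_i<1$), assemble $\widehat{\V}=\P\tran\V$ and its inverse, and left-multiply \eqref{trans_update_matrix_r} to obtain \eqref{error_check_progress} with $\|\mathbf{\Delta}\|=\sqrt{\lambda}$. The only blemish is that your stated eigenvalue--eigenvector pairing has the signs swapped (the eigenvector $[\sqrt{\lambda_i};\,-\jmath\sqrt{1-\lambda_i}]$ corresponds to $\lambda_i+\jmath\sqrt{\lambda_i(1-\lambda_i)}$, not the minus branch), which is inconsequential since it merely reorders the diagonal of $\Delta_i$ and leaves $\|\mathbf{\Delta}\|$ unchanged.
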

	\begin{proof}
		The proof exploits the special structure of the matrix $\D$ given in \eqref{matrix_cons}. First note that if
		\begin{align} \label{diag_A}
			 A  = \begin{bmatrix}
				\diag\{b_i\}_{i=2}^N & \diag\{c_i\}_{i=2}^N \\
				\diag\{d_i\}_{i=2}^N & \diag\{e_i\}_{i=2}^N 
			\end{bmatrix} \define \begin{bmatrix}
				B & C \\
				D & E 
			\end{bmatrix}.
		\end{align}
		where $b_i,c_i,d_i,e_i$ are constants, then there exists a permutation matrix $P$ such that
		\begin{align} \label{bdiag_A_permuted}
	P	A P\tran= \bdiag\{A_i\}_{i=2}^N, \quad A_i=\begin{bmatrix}
			b_i & c_i \\
			d_i & e_i
		\end{bmatrix}.
	\end{align}
			 Since the blocks of $\D$ given in \eqref{matrix_cons} are diagonal matrices, there exists a permutation matrix $\P$ such that 
			\begin{align*} 
			\P \D \P\tran = \bdiag\{D_i \}_{i=2}^N \otimes I_m, \quad 	D_i = \begin{bmatrix}
					\lambda_{i} & -  \lambda_{i}   \\
					1-\lambda_i	 & ~~\lambda_{i}
				\end{bmatrix} \in \real^{2 \times 2}.
			\end{align*}
			The eigenvalues of $D_i$ ($i=2,\ldots,N$) are
			\[
			\begin{aligned}
				\delta_{(1,2),i} &=(1/2) \left[\Tr(D_i) \pm \sqrt{\Tr(D_i)^2-4 \det(D_i)}\right] \\
				&= \lambda_i \pm \sqrt{ \lambda_i^2 - \lambda_i}.
			\end{aligned}
			\]
		Notice that $|\delta_{(1,2),i}|<1$ when $-\frac{1}{3}<\lambda_i<1$, which holds under Assumption \ref{assump:network}	since  $W >0 $, \ie, $0<\lambda_i <1$ ($i=2,\dots,N$).   For $0<\lambda_i <1$,  the eigenvalues of $D_i$ are complex and distinct: 
				\begin{align*} 
					\delta_{(1,2),i} = \lambda_i \pm \jmath \sqrt{\lambda_i-\lambda_i^2}, \qquad |\delta_{(1,2),i} |=\sqrt{\lambda_i}<1,
				\end{align*}
				where $\jmath^2=-1$. Through algebraic multiplication it can be verified that $	D_i=V_i \Delta_i V_{i}^{-1}$ where 
				\begin{align} \label{delta_i_def}
							\Delta_i=
					\begin{bmatrix}
						\lambda_i + \jmath \sqrt{\lambda_i-\lambda_i^2} & 0 \\
						0 & \lambda_i -  \jmath \sqrt{\lambda_i-\lambda_i^2},
					\end{bmatrix}
				\end{align}
				and
				\begin{align} \label{V_i_def}
					V_i=\begin{bmatrix}
						\sqrt{\lambda_i}  & \sqrt{\lambda_i}  \\
						- \jmath \sqrt{1-\lambda_i}   & \jmath  \sqrt{1-\lambda_i}
					\end{bmatrix}, \quad 		V_i^{-1}=
					\begin{bmatrix}
						\frac{1}{2 \sqrt{\lambda_i}}  & \frac{\jmath}{2\sqrt{1-\lambda_i}}  \\
						\frac{1}{2 \sqrt{\lambda_i}}    & -  \frac{\jmath}{2\sqrt{1-\lambda_i}}
					\end{bmatrix}.
				\end{align}	
				We conclude that $\D=\P\tran \V \mathbf{\Delta} \V^{-1} \P$  where $\V=\bdiag\{V_i\}_{i=2}^N \otimes I_m$ and $\mathbf{\Delta} = \bdiag\{\Delta_i\}_{i=2}^N \otimes I_m$. Therefore, left multiplying both sides of \eqref{trans_update_matrix_r} by $\widehat{\V}^{-1}$ where $\widehat{\V} = \P\tran \V$  gives \eqref{error_check_progress}.				Exploiting the structure of $\V^{-1}=\bdiag\{V_i^{-1}\}_{i=2}^N \otimes I_m$ where $V_i^{-1}$ is defined in \eqref{V_i_def} and using \eqref{bdiag_A_permuted}, we get 
					\begin{align*} 
					\widehat{\V}^{-1}= \V^{-1} \P =\P (\P\tran  \V^{-1} \P) 
					&=\P \begin{bmatrix}
						\diag\{\frac{1}{2 \sqrt{\lambda_i}}\}_{i=2}^N   & \diag\{\frac{\jmath}{2\sqrt{1-\lambda_i}}\}_{i=2}^N  \\
						\diag\{\frac{1}{2 \sqrt{\lambda_i}}\}_{i=2}^N    & -  \diag\{\frac{\jmath}{2\sqrt{1-\lambda_i}}\}_{i=2}^N
					\end{bmatrix} \otimes I_m
					\nonumber \\
					& = \P \begin{bmatrix}
						\frac{1}{2 } \widehat{\mathbf{\Lambda}}^{-\frac{1}{2}}  & \frac{\jmath}{2} (\I-\widehat{\mathbf{\Lambda}})^{-\frac{1}{2}}  \\
						\frac{1}{2} \widehat{\mathbf{\Lambda}}^{-\frac{1}{2}}    & -  \frac{\jmath}{2} (\I-\widehat{\mathbf{\Lambda}})^{-\frac{1}{2}}
					\end{bmatrix}.
				\end{align*}
	Using similar arguments for $\V$ gives \eqref{V_structure}.

				%% zero eigenvalue case 
%				If $\lambda_i =0$, then $\P \D \P\tran = \bdiag\{D_i\}_{i=2}^N$  where
%				\[
%				D_i= \begin{bmatrix}
%					0 & 0 \\
%					1 & 0
%				\end{bmatrix}= \begin{bmatrix}
%					2\epsilon & 0 \\
%					0 & 2
%				\end{bmatrix} \begin{bmatrix}
%					0 & 0 \\
%					\epsilon & 0
%				\end{bmatrix}  \begin{bmatrix}
%					1/2\epsilon & 0 \\
%					0 & 1/2
%				\end{bmatrix}.
%				\]
%				We simply let $\epsilon=\sqrt{\lambda}$ so that $V_i= \diag\{\sqrt{\lambda},1\}$.

	\end{proof}
	
\begin{remark}[\sc \small Centralized case] \label{remark:centralized_case} \rm The transformation in Lemma \ref{lemma:transf_avg_hat} is needed for  decentralized network analysis since,  as explained before, the norm of the matrix $\D$ \eqref{matrix_cons} is not necessarily less than one.  However, when the network is fully-connected (centralized case),  we have $\widehat{\mathbf{\Lambda}}=\zero$, and thus, it follows from \eqref{trans_update_matrix_r} that $\widehat{\Q}\tran \x^{r}=\zero$ and when $\beta=1/\uptau$, we have
		\begin{align}    \label{centralize_case_deviation}
		\d^{r+1}_{\rm cen} \define	\widehat{\Q}\tran \z^{r+1} &=   -\alpha   \widehat{\Q}\tran  \sum_{t=0}^{\uptau-1} \big(\nabla \f(\bm{\Phi}^{r}_{t}) - \grad \f(\bar{\x}^r) + \s_t^r \big)
			+ \uptau \alpha
			\widehat{\Q}\tran \big(\grad \f(\bar{\x}^{r+1})-\grad \f(\bar{\x}^{r})\big).
		\end{align}
	In this case, the analysis can be greatly simplified since $\D=\zero$. This specialization covers the method \alg{VRL-SGD} \cite{liang2019variance}. A similar approach can also be adapted to \alg{FedGate} \cite{haddadpour2021federated}, as described in \eqref{fedgate_network}. Doing so eventually leads to the result in Corollary \ref{cor_centralized}. See Appendix \ref{app_server_proof}.  \qd
\end{remark}

		\section{Convergence  analysis}
		In this section, we will prove Theorems \ref{thm:noncvx} and \ref{thm:cvx}. The proof utilizes equations \eqref{error_avg_progress} and \eqref{error_check_progress} derived in the previous section. We want to emphasize that some of the bounds may appear tedious, even though they only involve basic algebra. Such complexity is common in decentralized analysis, as seen in, for example, \cite{alghunaim2021unified,koloskova2019decentralized}.
		
			\subsection{Auxiliary results}
In the proof, we will use the following useful results and facts. (You may overlook and refer to this subsection later in the proofs.)
		
		\begin{itemize}
			
				\item 	 Since the squared norm $\|\cdot\|^2$ is convex, applying Jensen's inequality, it holds that \cite{boyd2004convex}
			\begin{subequations} \label{jensens}
				\begin{align} \label{jensens_a}
					\|{\textstyle \sum\limits_{k=1}^{K}} a_{k}\|^{2} \leq K \sum_{k=1}^{K}\left\|a_{k}\right\|^{2}, 
				\end{align}
				for all  vectors $\{a_k\}_{k=1}^K$ of equal size  and positive integer  $K$.
				Moreover, for any equal-size vectors $a$ and $b$, we have for $\theta \in (0,1)$:
				\begin{align} \label{jensens_b}
					\|a+b\|^2 \leq \frac{1}{\theta} \|a\|^2+\frac{1}{1-\theta}\|b\|^2.
				\end{align}
			\end{subequations}
				Many bounds later in the proofs uses inequality \eqref{jensens_a} without referring to it to avoid redundancies.

		\item 	Taking the squared norm on both sides of \eqref{e_hat_def} and using \eqref{jensens_a}, it holds that:
		\begin{align} \label{dhat_bound}
			\| \widehat{\d}^{r}\|^2 \leq \frac{1}{4} \left\| \begin{bmatrix}
				\widehat{\mathbf{\Lambda}}^{-\frac{1}{2}} \widehat{\Q}\tran\x^{r} + \jmath \widehat{\B}^{-\frac{1}{2}} \widehat{\Q}\tran\z^{r} \\
				\widehat{\mathbf{\Lambda}}^{-\frac{1}{2}} \widehat{\Q}\tran\x^{r}
				- \widehat{\B}^{-\frac{1}{2}} \widehat{\Q}\tran\z^{r}
			\end{bmatrix} \right\|^2  \leq  \|	\widehat{\mathbf{\Lambda}}^{-\frac{1}{2}} \widehat{\Q}\tran\x^{r}\|^2 + \|\widehat{\B}^{-\frac{1}{2}} \widehat{\Q}\tran\z^{r}\|^2 .
		\end{align}

				\item From \eqref{e_hat_def}--\eqref{V_structure}, it holds that
				\begin{align}
					\begin{bmatrix}
						\widehat{\Q}\tran\x^{r} \\
						\widehat{\Q}\tran\z^{r}
					\end{bmatrix} =   \begin{bmatrix}
						\widehat{\mathbf{\Lambda}}^{\frac{1}{2}}  & 	\widehat{\mathbf{\Lambda}}^{\frac{1}{2}} \\
						-\jmath  \widehat{\B}^{\frac{1}{2}}   &   	\jmath  \widehat{\B}^{\frac{1}{2}} 
					\end{bmatrix}	\P\tran	\widehat{\d}^{r} =   \begin{bmatrix}
						\widehat{\mathbf{\Lambda}}^{\frac{1}{2}} (\P_u\tran  + \P_l\tran   )\widehat{\d}^{r} \\
						-	\jmath  \widehat{\B}^{\frac{1}{2}}  (\P_u\tran  - \P_l\tran   ) \widehat{\d}^{r}
					\end{bmatrix},
				\end{align}
				where $\P_u\tran$ and $\P_l\tran$  are the upper and lower blocks of $\P\tran=[\P_u\tran ; \P_l\tran]$.	
				It follows that:
				\begin{subequations} \label{dhat_bound_x_z_dev}
			\begin{align} 
				\|\x^r - \bar{\x}^r\|^2 &= \|\widehat{\Q}\tran \x^{r}\|^2
				= \|\widehat{\mathbf{\Lambda}}^{\frac{1}{2}} (\P_u\tran + \P_l\tran  ) \widehat{\d}^{r} \|^2 \leq 4 \lambda  \|\widehat{\d}^{r}\|^2   \\
				\|\z^r - \bar{\z}^r \|^2 &=	 \|\widehat{\Q}\tran \z^{r}\|^2
				= \| -\jmath  \widehat{\B}^{\frac{1}{2}} (\P_u\tran - \P_l\tran  )  \widehat{\d}^{r} \|^2
				\leq 4 \|\widehat{\B}\|  \|\widehat{\d}^{r}\|^2,
			\end{align}
		\end{subequations}
				where we used $\|\P_u\tran + \P_l\tran\|^2\leq 4$ and $\|\P_u\tran - \P_l\tran\|^2 \leq 4$ since $\P$ is a permutation matrix $\|\P\|=1$.
				
				\item 	
			It holds that
				\begin{align}
				\s^r \define		\widehat{\V}^{-1}   \begin{bmatrix}
						\widehat{\mathbf{\Lambda}}  \widehat{\Q}\tran \sum\limits_{t=0}^{\uptau-1}  \s_t^r  \\
						\widehat{\B}   \widehat{\Q}\tran\sum\limits_{t=0}^{\uptau-1}   \s_t^r 
					\end{bmatrix} 
					 \overset{\eqref{V_structure}}&{=} \P \begin{bmatrix}
						\frac{1}{2 } \widehat{\mathbf{\Lambda}}^{-\frac{1}{2}}  & \frac{\jmath}{2} \widehat{\B}^{-\frac{1}{2}}  \\
						\frac{1}{2} \widehat{\mathbf{\Lambda}}^{-\frac{1}{2}}    & -  \frac{\jmath}{2} \widehat{\B}^{-\frac{1}{2}}
					\end{bmatrix} 
					\begin{bmatrix}
						\widehat{\mathbf{\Lambda}}  \widehat{\Q}\tran \sum\limits_{t=0}^{\uptau-1}  \s_t^r  \\
						\widehat{\B}   \widehat{\Q}\tran \sum\limits_{t=0}^{\uptau-1}   \s_t^r 
					\end{bmatrix} 
					\\
					&= \tfrac{1}{2} \P
					\begin{bmatrix}
						\widehat{\mathbf{\Lambda}}^{\frac{1}{2}}  \widehat{\Q}\tran \sum\limits_{t=0}^{\uptau-1}  \s_t^r + \jmath \widehat{\B}^{\frac{1}{2}} \widehat{\Q}\tran \sum\limits_{t=0}^{\uptau-1}   \s_t^r \\
						\widehat{\mathbf{\Lambda}}^{\frac{1}{2}}  \widehat{\Q}\tran \sum\limits_{t=0}^{\uptau-1}  \s_t^r - \jmath \widehat{\B}^{\frac{1}{2}} \widehat{\Q}\tran \sum\limits_{t=0}^{\uptau-1}   \s_t^r
					\end{bmatrix}.
				\label{s_V_def}
				\end{align}
				Therefore,
				\begin{align}
					\| \s^r \|^2 &\leq \tfrac{1}{4}  \left(\|\widehat{\mathbf{\Lambda}}^{\frac{1}{2}}  \widehat{\Q}\tran \textstyle \sum\limits_{t=0}^{\uptau-1}  \s_t^r + \jmath \widehat{\B}^{\frac{1}{2}} \widehat{\Q}\tran \sum\limits_{t=0}^{\uptau-1}   \s_t^r\|^2 + \|\widehat{\mathbf{\Lambda}}^{\frac{1}{2}}  \widehat{\Q}\tran \sum\limits_{t=0}^{\uptau-1}  \s_t^r - \jmath \widehat{\B}^{\frac{1}{2}} \widehat{\Q}\tran \sum\limits_{t=0}^{\uptau-1}   \s_t^r\|^2 \right) \nonumber \\
					& \leq  2 \|\textstyle \sum\limits_{t=0}^{\uptau-1}   \s_t^r \|^2. \label{vinv_noise_bound}
				\end{align} 
			The last step holds by using Jensen's inequality \eqref{jensens} and  \eqref{Q_matrix_bound}--\eqref{B_matrix_bound}. 				Following similar arguments, it can be shown that the squared norm of
					\begin{align} \label{h_V_def}
				\h^{r+1} & \define \widehat{\V}^{-1}   \begin{bmatrix}
					\widehat{\mathbf{\Lambda}}  \widehat{\Q}\tran \sum\limits_{t} \big(\nabla \f(\bm{\Phi}^r_t) - \grad \f(\bar{\x}^r)  \big) \\
					\widehat{\B}  \widehat{\Q}\tran \sum\limits_{t} \big(\nabla \f(\bm{\Phi}^r_{t}) - \grad \f(\bar{\x}^r)  \big) -  \uptau  \widehat{\Q}\tran \big(\grad \f(\bar{\x}^{r+1})-\grad \f(\bar{\x}^{r})\big)
				\end{bmatrix}.
			\end{align}
		is upper bounded by
				\begin{align}
						\| 	\h^{r+1} \|^2 
					& \leq  4 \|\textstyle \sum\limits_{t=0}^{\uptau-1}  \nabla \f(\bm{\Phi}^r_t) - \grad \f(\bar{\x}^r)\|^2 +  \uptau^2 \|	 \widehat{\B}^{-1}\| \|\grad \f(\bar{\x}^{r+1})-\grad \f(\bar{\x}^{r})\|^2.
					 \label{vinv_grad_bound}
				\end{align}

		\end{itemize}
	
	\subsection{Key bounds}
	In this section, we derive some key bounds that will be used to establish our result for both nonconvex and convex cases.

 The first bound involves the cumulative deviation of the local updates  from the averaged vector at the previous communication round defined as
	\begin{align} \label{drift_def}
		\|\bm{\widehat{\Phi}}^r\|^2 \define \sum\limits_{t=0}^{\uptau-1}  \|\bm{\Phi}^r_{t} - \bar{\x}^r\|^2 =	    \sum_{t=0}^{\uptau-1} \sum_{i =1}^N    \|\phi^{r}_{i,t}-\bar{x}^r\|^2.
	\end{align}
	where $\bm{\widehat{\Phi}}^r \define \col\{\bm{\Phi}^r_{t} - \bar{\x}^r\}_{t=0}^{\uptau-1}$. The term \eqref{drift_def}  will appear frequently in our analysis.
	
	\begin{lemma}[\sc \small Local steps bound] \label{lema_drift}
		\rm  Let Assumptions \ref{assump:noise}--\ref{assump:smoothness} hold, then for $\alpha  \leq \frac{1}{2 \sqrt{2} L \uptau} $ we have
		\begin{align} \label{drift_bound}
			\Ex 	\|\bm{\widehat{\Phi}}^r\|^2 & \leq 64  \uptau  \Ex \|\widehat{\d}^r\|^2+ 16   \alpha^2 \uptau^3 N  \Ex \|\grad f(\bar{x}^r)\|^2 + 4 \alpha^2   \uptau^2 N \sigma^2.
		\end{align}
	\end{lemma}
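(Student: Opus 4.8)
The plan is to unroll the $\uptau$ inner iterations \eqref{local_ed_updates} and track how far each local iterate $\bm{\Phi}^r_t$ drifts from the round-$r$ centroid $\bar{\x}^r$. Iterating \eqref{local_ed_updates} from $\bm{\Phi}^r_0=\x^r$ gives, for every $t$,
\[
\bm{\Phi}^r_t = \x^r - \alpha\sum_{s=0}^{t-1}\grad\F(\bm{\Phi}^r_s;\bxi_s^r) - \beta t\,\y^r .
\]
Subtracting $\bar{\x}^r$ and substituting $\y^r=\z^r-\tfrac{\alpha}{\beta}\grad\f(\bar{\x}^r)$ from \eqref{z_def}, the two $\grad\f(\bar{\x}^r)$ contributions cancel because $\sum_{s=0}^{t-1}\grad\f(\bar{\x}^r)=t\,\grad\f(\bar{\x}^r)$. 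Writing $\z^r=(\z^r-\bar{\z}^r)+\bar{\z}^r$ and using $\beta\bar{\z}^r=\alpha(\one_N\otimes\grad f(\bar{x}^r))$, which follows from \eqref{avg_z_grad}, I would arrive at the decomposition
\[
\bm{\Phi}^r_t-\bar{\x}^r = (\x^r-\bar{\x}^r) - \alpha\sum_{s=0}^{t-1}\big(\grad\f(\bm{\Phi}^r_s)-\grad\f(\bar{\x}^r)\big) - \beta t(\z^r-\bar{\z}^r) - \alpha t(\one_N\otimes\grad f(\bar{x}^r)) - \alpha\sum_{s=0}^{t-1}\s_s^r ,
\]
where $\s_s^r$ is the gradient noise \eqref{noise_def}. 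Each of the five pieces is now interpretable: the first and third are consensus-type deviations controlled by $\widehat{\d}^r$, the fourth is the explicit $\grad f(\bar{x}^r)$ term, the second is a self-referential drift, and the last is pure noise.

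Next I would take the squared norm, apply Jensen's inequality \eqref{jensens_a} with $K=5$ to split the five pieces (which automatically removes all cross terms), and take expectations. The first and third pieces are bounded through the estimates in \eqref{dhat_bound_x_z_dev}, namely $\Ex\|\x^r-\bar{\x}^r\|^2\le 4\lambda\,\Ex\|\widehat{\d}^r\|^2$ and $\Ex\|\z^r-\bar{\z}^r\|^2\le 4\|\widehat{\B}\|\,\Ex\|\widehat{\d}^r\|^2$; using $\lambda<1$, $\|\widehat{\B}\|\le 1$ and $(\beta t)^2\le 1$ (here $\beta=1/\uptau$ is essential), these two together contribute at most $40\,\Ex\|\widehat{\d}^r\|^2$ per $t$. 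The fourth piece gives $5\alpha^2t^2N\,\Ex\|\grad f(\bar{x}^r)\|^2$ since $\|\one_N\otimes\grad f(\bar{x}^r)\|^2=N\|\grad f(\bar{x}^r)\|^2$. For the self-referential second piece, an inner Jensen step and $L$-smoothness \eqref{smooth_f_eq} yield $5\alpha^2tL^2\sum_{s=0}^{t-1}\Ex\|\bm{\Phi}^r_s-\bar{\x}^r\|^2$. Finally, because the noises $\{\s_s^r\}$ are conditionally mean-zero and independent across $s$ (Assumption \ref{assump:noise}), the noise piece contributes $5\alpha^2\sum_{s=0}^{t-1}\Ex\|\s_s^r\|^2\le 5\alpha^2tN\sigma^2$ by \eqref{noise_bound_eq_variance}.

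Summing over $t=0,\dots,\uptau-1$ with $\sum_t t\le \uptau^2/2$, $\sum_t t^2\le\uptau^3/3$, and the sharper double-sum estimate $\sum_{t=0}^{\uptau-1}t\sum_{s=0}^{t-1}\Ex\|\bm{\Phi}^r_s-\bar{\x}^r\|^2\le\tfrac{\uptau^2}{2}\sum_{s=0}^{\uptau-1}\Ex\|\bm{\Phi}^r_s-\bar{\x}^r\|^2$, the recursion closes on the drift $\Ex\|\bm{\widehat{\Phi}}^r\|^2$ of \eqref{drift_def}:
\[
\Ex\|\bm{\widehat{\Phi}}^r\|^2 \le 40\uptau\,\Ex\|\widehat{\d}^r\|^2 + \tfrac{5}{2}\alpha^2 L^2\uptau^2\,\Ex\|\bm{\widehat{\Phi}}^r\|^2 + \tfrac{5}{3}\alpha^2\uptau^3 N\,\Ex\|\grad f(\bar{x}^r)\|^2 + \tfrac{5}{2}\alpha^2\uptau^2 N\sigma^2 .
\]
The stepsize condition $\alpha\le\frac{1}{2\sqrt2\,L\uptau}$ forces $\tfrac{5}{2}\alpha^2L^2\uptau^2\le\tfrac{5}{16}$, so the drift term is absorbed into the left-hand side; dividing by $1-\tfrac{5}{16}=\tfrac{11}{16}$ produces coefficients $\tfrac{16}{11}\cdot40\approx58.2\le 64$, $\tfrac{16}{11}\cdot\tfrac{5}{3}\approx2.4\le 16$, and $\tfrac{16}{11}\cdot\tfrac{5}{2}\approx3.6\le 4$, matching the claimed bound. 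I expect the main obstacle to be exactly this self-referential structure, where the drift reappears on both sides through smoothness: closing the loop with the advertised constants hinges on (i) the cancellation of the $\grad\f(\bar{\x}^r)$ terms enabled by introducing $\z^r$, (ii) the choice $\beta=1/\uptau$ which keeps $\beta t\le 1$, and (iii) the tighter $\uptau^2/2$ double-sum bound in place of the naive $\uptau^2$. The remaining effort is careful but routine constant bookkeeping.
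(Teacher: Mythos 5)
Your proof is correct and in fact lands inside the advertised constants with room to spare (your coefficients $\approx 58.2$, $2.4$, $3.6$ versus the stated $64$, $16$, $4$), but it takes a genuinely different route from the paper. The paper's proof extends the SCAFFOLD-style argument of \cite{karimireddy2019scaffold}: it works per node and per local step, deriving the one-step recursion $\Ex\|\phi^{r}_{i,t+1}-\bar{x}^r\|^2 \le \left(1+\tfrac{5/4}{\uptau-1}\right)\Ex\|\phi^{r}_{i,t}-\bar{x}^r\|^2+\tfrac{2}{\uptau}\Ex\|z_i^r\|^2+\alpha^2\sigma^2$ via the relaxed triangle inequality \eqref{jensens_b} with $\theta=1-1/\uptau$, where the stepsize condition is spent on absorbing the $L$-smoothness term into the multiplicative growth factor; it then unrolls this geometric growth using $\left(1+\tfrac{5/4}{\uptau-1}\right)^t\le e^{5/4}\le 4$, sums over $i$ and $t$, and finishes --- exactly as you do --- by splitting $z_i^r$ into deviation plus mean and invoking \eqref{dhat_bound_x_z_dev} and \eqref{avg_z_grad}; it also requires a separate (trivial) case for $\uptau=1$, since the factor $\tfrac{1}{\uptau-1}$ is meaningless there. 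You instead unroll the inner loop completely in stacked notation, isolate five additive pieces after the $\y^r\to\z^r$ substitution, exploit the martingale property of the noise to get $\Ex\|\sum_s \s_s^r\|^2=\sum_s\Ex\|\s_s^r\|^2$, and close a self-referential inequality in the total drift $\Ex\|\bm{\widehat{\Phi}}^r\|^2$ by absorption (dividing by $1-\tfrac{5}{16}=\tfrac{11}{16}$). What your route buys: no case split on $\uptau$, no exponential estimates, and fully explicit bookkeeping --- indeed your accounting is tighter than the paper's own final step, which ends at $16\uptau\big(4\lambda+4\|\widehat{\B}\|\big)\Ex\|\widehat{\d}^r\|^2$ and is generous about the resulting factor. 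What the paper's route buys: the per-step contraction template stays at the node level and adapts more directly to settings where the inner loop changes step by step (e.g., per-step stepsizes), and it is the standard argument readers of this literature will recognize. Both proofs hinge on the same two structural ingredients you correctly identified: the change of variables to $\z^r$ so the heterogeneity term cancels and only $\grad f(\bar{x}^r)$ survives via \eqref{avg_z_grad}, and the projection bounds \eqref{dhat_bound_x_z_dev} converting consensus errors into $\|\widehat{\d}^r\|^2$.
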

	\begin{proof}
	The proof extends the techniques from \cite[Lemma 8]{karimireddy2019scaffold}.	When $\uptau=1$, then $\phi_{i, 0}=x_i^r$ for all $i$ and 
		\begin{align*}
			\Ex 	\|\bm{\widehat{\Phi}}^r\|^2=  \Ex \|\x^r - \bar{\x}^r\|^2 \overset{\eqref{dhat_bound_x_z_dev}}{\leq}   4 \lambda \Ex \| \widehat{\d}^{r}\|^2 \leq 4   \Ex \| \widehat{\d}^{r}\|^2. 
		\end{align*}
		Now suppose that $\uptau \geq 2$. Then, using \eqref{local_ed_updates}, it holds that
		\[
		\begin{aligned}
			\Ex\|\phi^{r}_{i,t+1} -\bar{x}^r\|^2 &=\Ex \left\|
			\phi^{r}_{i,t}-\bar{x}^r-\alpha  \grad F_i(\phi^{r}_{i,t};\xi_{i,t})   -  \beta  y_i^{r} \right\|^2 \\
			 \overset{\eqref{noise_bound_eq}}&{\leq} \Ex \left \| \phi^{r}_{i,t}-\bar{x}^r-\alpha  \grad f_i(\phi^{r}_{i,t})   -  \beta  y_i^{r} \right\|^2+\alpha^2 \sigma^2 \\
			 \overset{\eqref{jensens_b}}&{\leq} \left(1+\tfrac{1}{\uptau-1}\right) \Ex\left\| \phi^{r}_{i,t}-\bar{x}^r \right\|^2+\uptau \Ex \left\|\alpha  \grad f_i(\phi^{r}_{i,t})   +  \beta  y_i^{r}  \right\|^2+\alpha^2 \sigma^2  \\
			 \overset{\eqref{z_def}}&{=} \left(1+\tfrac{1}{\uptau-1}\right) \Ex\| \phi^{r}_{i,t}-\bar{x}^r \|^2+\uptau \Ex \left\|\alpha  \grad f_i(\phi^{r}_{i,t})   - \alpha \grad f_i(\bar{x}^r)    + \beta  z_i^{r}  \right\|^2+\alpha^2 \sigma^2 \\
			\overset{\eqref{jensens_a}}&{\leq} \left(1+\tfrac{1}{\uptau-1}\right) \Ex\| \phi^{r}_{i,t}-\bar{x}^r \|^2+2  \alpha^2 \uptau \Ex \left\|  \grad f_i(\phi^{r}_{i,t})   -  \grad f_i(\bar{x}^r) \right\|^2   + 2 \uptau \beta^2 \|  z_i^{r}  \|^2 + \alpha^2 \sigma^2 \\
			\overset{\eqref{smooth_f_eq}}&{\leq} \left(1+\tfrac{1}{\uptau-1}+2\alpha^2 \uptau L^2  \right) \Ex\|\phi^{r}_{i,t}-\bar{x}^r \|^2+\tfrac{2}{ \uptau} \Ex \left\|z_i^r\right\|^2+ \alpha^2 \sigma^2 \\
			& \leq \left(1+\tfrac{5/4}{\uptau-1}\right) \Ex\| \phi^{r}_{i,t}-\bar{x}^r \|^2+\tfrac{2}{ \uptau} \Ex \|z_i^r\|^2+ \alpha^2 \sigma^2.
		\end{aligned}
		\]
	The second inequality uses \eqref{jensens_b} with $\theta=1-\frac{1}{\uptau}$.	The  last inequality holds for $2\alpha^2 \uptau L^2  \leq \frac{1}{4(\uptau-1)}$, which is satisfied if $\alpha \leq \frac{1}{2\sqrt{2} L \uptau}$.		Iterating the inequality above for $t=0,\dots,\uptau-1$:
		\[
		\begin{aligned}
			\Ex\|\phi^{r}_{i,t+1} -\bar{x}^r\|^2 & \leq \left(1+\tfrac{5/4}{(\uptau-1)}\right)^t \Ex \|x_i^r -\bar{x}^r\|^2+ \textstyle \sum\limits_{\ell=0}^{t}\left( \tfrac{2}{\uptau} \Ex \left\| z_i^r \right\|^2+ \alpha^2 \sigma^2 \right)\left(1+\frac{5/4}{(\uptau-1)}\right)^\ell \\
			& \leq \exp\left(\tfrac{(5/4) t}{\uptau-1}\right)  \Ex \|x_i^r -\bar{x}^r\|^2 + \textstyle \sum\limits_{\ell=0}^{t}\left( \tfrac{2}{\uptau} \Ex \left\| z_i^r \right\|^2+ \alpha^2 \sigma^2 \right) \exp\left(\tfrac{(5/4) \ell}{\uptau-1}\right) \\
			& \leq 4  \Ex \|x_i^r -\bar{x}^r\|^2+ \left(\tfrac{2}{\uptau} \Ex\left\| z_i^r \right\|^2+\alpha^2 \sigma^2 \right) 4 \uptau \\
			& \leq 
			 4( \Ex \|x_i^r -\bar{x}^r\|^2+ 2 \Ex \left\| z_i^r \right\|^2 ) +4  \alpha^2 \uptau \sigma^2,
		\end{aligned}
		\]
		where  in the second and third inequalities we used  $(1+\tfrac{a}{\uptau-1})^t \leq \exp(\frac{ a t}{\uptau-1})\leq \exp(a)$ for $t \leq \uptau -1$.	Summing over $i$ and $t$:
		\[
		\begin{aligned}
			\Ex	\|\bm{\widehat{\Phi}}^r\|^2 & \leq 4 \uptau \textstyle \sum\limits_{i=1}^N (\Ex \|x_i^r -\bar{x}^r\|^2+ 2 \Ex \left\| z_i^r \right\|^2 ) + 4 \alpha^2 \uptau^2 N \sigma^2 \\
			& \leq 4 \uptau \textstyle \sum\limits_{i=1}^N (\Ex \|x_i^r -\bar{x}^r\|^2+ 4 \Ex \|z_i^r -\bar{z}^r\|^2 + 4 \Ex \|\bar{z}^r\|^2 ) + 4 \alpha^2 \uptau^2 N \sigma^2 \\
			& \leq 16 \uptau (\Ex \|\x^r -\bar{\x}^r\|^2+  \Ex \|\z^r -\bar{\z}^r\|^2 ) +  16  \uptau N \Ex \|\bar{z}^r\|^2+ 4 \alpha^2 \uptau^2 N \sigma^2.
		\end{aligned}
		\]
	In the second inequality we used $\| z_i^r \|^2 \leq 2 \|z_i^r -\bar{z}^r\|^2 + 2 \Ex \|\bar{z}^r\|^2$, which follows form Jensen's inequality \eqref{jensens_a}.	The result follows by using \eqref{dhat_bound_x_z_dev} and \eqref{avg_z_grad} with $\beta=1/\uptau$.
	\end{proof}
	The next result measures the deviation from the average vector introduced in Lemma \ref{lemma:transf_avg_hat}.
	\begin{lemma}[\sc \small Deviation from average bound] \label{lemma:cons_inequ}
		\rm It holds that
		\begin{align}
			\textstyle \Ex \|\widehat{\d}^{r+1}\|^2 
			& \leq    \textstyle \left(\delta + \tfrac{512  \alpha^2 \uptau^2 L^2      }{ (1-\delta)}  \right)  \Ex \|\widehat{\d}^r\|^2  +  \textstyle \tfrac{128 \alpha^4 \uptau^4  L^2 N }{ (1-\delta)}  \Ex \|\grad f(\bar{x}^r)\|^2 
			+   \tfrac{4 \alpha^4 \uptau^2 L^2 \|	 \widehat{\B}^{-1}\|  N }{ (1-\delta)}  \textstyle \Ex \|  \sum\limits_{t}  \overline{\nabla f}(\bm{\Phi}^r_t) \|^2
			\nonumber \\
			& \quad +  \frac{32 \alpha^4 \uptau^3 L^2  N  \sigma^2 }{ (1-\delta)}      + \frac{4 \alpha^4 \uptau^3 L^2 \|	 \widehat{\B}^{-1}\|  \sigma^2 }{ (1-\delta)}  + 4 \alpha^2 \uptau   N \sigma^2,
			\label{cons_ineq_lemma}
		\end{align}
where $\delta$ and $\widehat{\B}$ were defined in Lemma \ref{lemma:transf_avg_hat}.
	\end{lemma}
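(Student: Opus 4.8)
The plan is to start from the transformed recursion \eqref{error_check_progress} of Lemma \ref{lemma:transf_avg_hat}. Using the definitions \eqref{s_V_def} of $\s^r$ and \eqref{h_V_def} of $\h^{r+1}$, the bracketed right-hand side of \eqref{error_check_progress} splits by linearity into its deterministic-gradient part and its stochastic part, so the recursion reads $\widehat{\d}^{r+1}=\mathbf{\Delta}\widehat{\d}^{r}-\alpha\h^{r+1}-\alpha\s^r$. Since $\|\mathbf{\Delta}\|=\delta=\sqrt{\lambda}<1$, the matrix $\mathbf{\Delta}$ is contractive, and the objective is to propagate this contraction to $\Ex\|\widehat{\d}^{r+1}\|^2$ while controlling the gradient and noise contributions so that they assemble into the coefficients appearing in \eqref{cons_ineq_lemma}.

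First I would isolate the explicit noise: expanding the squared norm gives $\Ex\|\widehat{\d}^{r+1}\|^2=\Ex\|\mathbf{\Delta}\widehat{\d}^{r}-\alpha\h^{r+1}\|^2+2\alpha^2\Ex\langle\h^{r+1},\s^r\rangle+\alpha^2\Ex\|\s^r\|^2$, where the cross term with the $\mathcal{F}_r$-measurable piece $\mathbf{\Delta}\widehat{\d}^r$ vanishes because each per-step noise $\s_t^r$ is conditionally unbiased under Assumption \ref{assump:noise}, hence $\Ex[\s^r\mid\mathcal{F}_r]=0$. The same martingale structure gives $\Ex\|\s^r\|^2\le 2\Ex\|\sum_t\s_t^r\|^2=2\sum_t\Ex\|\s_t^r\|^2\le 2\uptau N\sigma^2$ via \eqref{vinv_noise_bound}. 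To the remaining term I would apply Young's inequality $\|a+b\|^2\le(1+c)\|a\|^2+(1+1/c)\|b\|^2$ with $a=\mathbf{\Delta}\widehat{\d}^r$ and $c=(1-\delta)/\delta$, so that $(1+c)\|\mathbf{\Delta}\widehat{\d}^r\|^2\le\delta\|\widehat{\d}^r\|^2$ and $1+1/c=\tfrac{1}{1-\delta}$; this produces the leading coefficient $\delta$ on $\Ex\|\widehat{\d}^r\|^2$ and the factor $\tfrac{1}{1-\delta}$ multiplying $\alpha^2\Ex\|\h^{r+1}\|^2$.

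Next I would bound $\Ex\|\h^{r+1}\|^2$ through \eqref{vinv_grad_bound}. Its first piece $\|\sum_t(\nabla\f(\bm{\Phi}^r_t)-\grad\f(\bar{\x}^r))\|^2$ is controlled by $L$-smoothness and Jensen, giving at most $\uptau L^2\sum_t\|\bm{\Phi}^r_t-\bar{\x}^r\|^2=\uptau L^2\|\bm{\widehat{\Phi}}^r\|^2$, the cumulative drift \eqref{drift_def}. Its second piece $\uptau^2\|\widehat{\B}^{-1}\|\,\|\grad\f(\bar{\x}^{r+1})-\grad\f(\bar{\x}^r)\|^2$ is handled by smoothness and then the centroid update \eqref{error_avg_progress}, namely $\|\grad\f(\bar{\x}^{r+1})-\grad\f(\bar{\x}^r)\|^2\le L^2N\|\bar{x}^{r+1}-\bar{x}^r\|^2\le 2\alpha^2L^2N\big(\|\sum_t\overline{\nabla f}(\bm{\Phi}^r_t)\|^2+\|\sum_t\bar{s}_t^r\|^2\big)$, where the gradient term yields the $\Ex\|\sum_t\overline{\nabla f}(\bm{\Phi}^r_t)\|^2$ contribution and the noise term, bounded by $\Ex\|\sum_t\bar{s}_t^r\|^2\le\uptau\sigma^2/N$, yields the $\tfrac{\alpha^4\uptau^3L^2\|\widehat{\B}^{-1}\|\sigma^2}{1-\delta}$ contribution. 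Finally, substituting the drift bound of Lemma \ref{lema_drift} for $\Ex\|\bm{\widehat{\Phi}}^r\|^2$ reintroduces $\Ex\|\widehat{\d}^r\|^2$, $\Ex\|\grad f(\bar{x}^r)\|^2$, and $\sigma^2$, which consolidate into the coefficients $\tfrac{512\alpha^2\uptau^2L^2}{1-\delta}$, $\tfrac{128\alpha^4\uptau^4L^2N}{1-\delta}$, and $\tfrac{32\alpha^4\uptau^3L^2N\sigma^2}{1-\delta}$.

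The main obstacle is bookkeeping around the stochastic terms rather than any single hard estimate. Because $\h^{r+1}$ depends on the within-round noise through $\bm{\Phi}^r_t$ and $\bar{\x}^{r+1}$, the cross term $\Ex\langle\h^{r+1},\s^r\rangle$ does not vanish; bounding it by $2\langle a,b\rangle\le\|a\|^2+\|b\|^2$ splits it into an $\alpha^2\Ex\|\h^{r+1}\|^2$ part, which is absorbed into the $\tfrac{1}{1-\delta}$-weighted gradient terms (since $\tfrac{1}{1-\delta}\ge 1$), and an $\alpha^2\Ex\|\s^r\|^2$ part, which exactly doubles the noise contribution to the clean first-order term $4\alpha^2\uptau N\sigma^2$ without a spurious $\tfrac{1}{1-\delta}$ factor. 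The residual difficulty is tracking the many constants through the feedback created by Lemma \ref{lema_drift}, in which the drift bound feeds $\Ex\|\widehat{\d}^r\|^2$ back into the right-hand side, and confirming that the stepsize restriction $\alpha\le\tfrac{1}{2\sqrt{2}L\uptau}$ inherited from Lemma \ref{lema_drift} keeps every correction at its stated order so the collected inequality matches \eqref{cons_ineq_lemma}.
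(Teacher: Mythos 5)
Your proposal is correct and follows essentially the same route as the paper's proof: the same expansion of \eqref{error_check_progress} with the measurable cross term killed by conditional unbiasedness, the same Young/Jensen step producing the $\delta$ and $\tfrac{1}{1-\delta}$ weights, the same bounds \eqref{vinv_noise_bound} and \eqref{vinv_grad_bound} combined with \eqref{error_avg_progress}, the same $2\langle \s^r,\h^{r+1}\rangle \leq \|\s^r\|^2+\|\h^{r+1}\|^2$ treatment of the surviving cross term, and the same final substitution of the drift bound from Lemma \ref{lema_drift}, with all coefficients matching \eqref{cons_ineq_lemma}. Your remark that the stepsize condition $\alpha \leq \tfrac{1}{2\sqrt{2}L\uptau}$ is implicitly inherited from Lemma \ref{lema_drift} is also accurate (the paper's lemma statement omits it).
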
 
	\begin{proof}
		From now on we use the notation $ \sum\limits_{t} \equiv \sum\limits_{t=0}^{\uptau -1} $ and $ \sum\limits_{i} \equiv \sum\limits_{i=1}^{N} $. 		From \eqref{error_check_progress}, \eqref{s_V_def}, and \eqref{h_V_def}, we have
		\begin{align*}
			\textstyle \Ex_r \|\widehat{\d}^{r+1}\|^2 
			&=  \textstyle \Ex_r  \|\mathbf{\Delta} \widehat{\d}^{r} - \alpha \h^{r+1} \|^2  + \alpha^2 \textstyle \Ex_r  \|\s^r\|^2 - 2\alpha \textstyle \Ex_r  \langle \s^r, \mathbf{\Delta} \widehat{\d}^{r} - \alpha \h^{r+1} \rangle \\
			&=  \textstyle \Ex_r  \|\mathbf{\Delta} \widehat{\d}^{r} - \alpha \h^{r+1} \|^2  + \alpha^2 \textstyle \Ex_r  \|\s^r\|^2 + 2 \alpha^2 \textstyle \Ex_r  \langle \s^r,   \h^{r+1} \rangle \\
			& \leq  \textstyle \delta \Ex_r  \| \widehat{\d}^{r}\|^2 + \tfrac{\alpha^2}{ (1-\delta)} \textstyle \Ex_r \|\h^{r+1} \|^2  + \alpha^2 \textstyle \Ex_r  \|\s^r\|^2 + 2 \alpha^2 \textstyle \Ex_r  \langle \s^r,   \h^{r+1} \rangle,
		\end{align*}
		where $\delta = \|\bm{\Delta}\|=\sqrt{\lambda}<1$. 		The second line follows from the unbiased stochastic gradient condition \eqref{noise_bound_eq_mean} and the last step uses Jensen's inequality \eqref{jensens}.  Using $	2	\langle \s^r,   \h^{r+1} \rangle  \leq  \|\s^r\|^2 + \| \h^{r+1}\|^2$ and $1 \leq 1/(1-\delta)$ gives
		\begin{align}
			\textstyle \Ex_r \|\widehat{\d}^{r+1}\|^2 
			& \leq  \textstyle \delta \Ex_r  \| \widehat{\d}^{r}\|^2 + \tfrac{2\alpha^2}{ (1-\delta)} \textstyle \Ex_r \|\h^{r+1} \|^2  + 2\alpha^2 \textstyle \Ex_r  \|\s^r\|^2.
			\label{Edr_bound}
		\end{align}
		We now bound the terms  $\textstyle \Ex_r \| \s^r\|^2$ and $\textstyle \Ex_r \| \h^{r+1}\|^2$. 	From \eqref{vinv_noise_bound},	the noise term can be bounded by
		\begin{align*}
			\textstyle \Ex_r	\|	\s^r\|^2 
			& \leq 2  \textstyle \Ex_r \|\textstyle \sum\limits_{t}  \s_t^r\|^2  \overset{\eqref{jensens_a}}{\leq} 2 \uptau   N \sigma^2.
		\end{align*}
	 Using \eqref{vinv_grad_bound}, it holds that
		\begin{align*}
			 \textstyle \Ex_r \|	\h^{r+1}\|^2 
			& \leq 4 \textstyle \Ex_r \|\textstyle \sum\limits_{t}  \nabla \f(\bm{\Phi}^r_t) - \grad \f(\bar{\x}^r)\|^2 +  \uptau^2  \|	 \widehat{\B}^{-1}\| \Ex_r \|\grad \f(\bar{\x}^{r+1})-\grad \f(\bar{\x}^{r})\|^2 \\
			\overset{\eqref{smooth_f_eq}, \eqref{jensens_a}}&{\leq} 4 \uptau L^2   \textstyle\sum\limits_{t} \textstyle \Ex_r \|  \bm{\Phi}^r_t -  \bar{\x}^r  \|^2 + \uptau^2 L^2  \|	 \widehat{\B}^{-1}\| N \textstyle \Ex_r \|\bar{x}^{r+1}-\bar{x}^{r}\|^2 \\
			 \overset{\eqref{error_avg_progress}}&{=} 4 \uptau L^2   \textstyle\sum\limits_{t} \textstyle \Ex_r \|  \bm{\Phi}^r_t -  \bar{\x}^r  \|^2 
			+ \uptau^2 L^2  \|	 \widehat{\B}^{-1}\| N\textstyle \Ex_r  \| \alpha \sum\limits_{t} \big( \overline{\nabla f}(\bm{\Phi}^r_t) + \bar{s}_t^r  \big)\|^2 \\		
			& \leq 		 4 \uptau L^2  \textstyle \Ex_r \|\bm{\widehat{\Phi}}^r\|^2 + 2 \alpha^2 \uptau^2  L^2  \|	 \widehat{\B}^{-1}\| N \textstyle \Ex_r \|  \sum\limits_{t}  \overline{\nabla f}(\bm{\Phi}^r_t) \|^2 +2 \alpha^2 \uptau^2 L^2   \|	 \widehat{\B}^{-1}\| N \|  \sum\limits_{t} \bar{s}_t^r  \|^2 \\
			& \leq 		 4 \uptau L^2   \textstyle \Ex_r \|\bm{\widehat{\Phi}}^r\|^2 + 2 \alpha^2 \uptau^2 L^2  \|	 \widehat{\B}^{-1}\|  N \textstyle \Ex_r \|  \sum\limits_{t}  \overline{\nabla f}(\bm{\Phi}^r_t) \|^2 + 2 \alpha^2 \uptau^3 L^2 \|	 \widehat{\B}^{-1}\|  \sigma^2.
		\end{align*}
		Substituting the previous two bounds into \eqref{Edr_bound} and taking expectation gives
		\begin{align}
			\textstyle \Ex \|\widehat{\d}^{r+1}\|^2 
			& \leq  \textstyle \delta \Ex  \| \widehat{\d}^{r}\|^2 +  \textstyle \tfrac{8 \alpha^2 \uptau L^2  }{ (1-\delta)}  \textstyle \Ex  \|\bm{\widehat{\Phi}}^r\|^2 + \tfrac{4 \alpha^4  \uptau^2 L^2 \|	 \widehat{\B}^{-1}\|   N }{ (1-\delta)}  \textstyle \Ex \|  \sum\limits_{t}  \overline{\nabla f}(\bm{\Phi}^r_t) \|^2 \nonumber \\
			& \quad + \frac{4 \alpha^4 \uptau^3 L^2  \|	 \widehat{\B}^{-1}\| \sigma^2 }{ (1-\delta)}  + 4 \alpha^2 \uptau   N \sigma^2 .
		\end{align}
		Substituting \eqref{drift_bound} into the above inequality yields 
		\begin{align}
					\textstyle \Ex \|\widehat{\d}^{r+1}\|^2 
		& \leq  \textstyle \delta \Ex  \| \widehat{\d}^{r}\|^2 
		+  \textstyle \tfrac{8 \alpha^2 \uptau L^2  }{ (1-\delta)} ( 64  \uptau  \Ex \|\widehat{\d}^r\|^2 + 16   \alpha^2 \uptau^3 N  \Ex \|\grad f(\bar{x}^r)\|^2 + 4 \alpha^2   \uptau^2 N \sigma^2)
		\nonumber \\
		& \quad + \tfrac{4 \alpha^4  \uptau^2 L^2  \|	 \widehat{\B}^{-1}\| N }{ (1-\delta)}  \textstyle \Ex \|  \sum\limits_{t}  \overline{\nabla f}(\bm{\Phi}^r_t) \|^2 + \frac{4 \alpha^4 \uptau^3 L^2  \|	 \widehat{\B}^{-1}\| \sigma^2 }{ (1-\delta)}  + 2 \alpha^2 \uptau   N \sigma^2  \nonumber \\
			& =  \textstyle \left(\delta + \tfrac{512  \alpha^2 \uptau^2 L^2      }{ (1-\delta)}  \right)  \Ex \|\widehat{\d}^r\|^2  +  \textstyle \tfrac{128 \alpha^4 \uptau^4  L^2 N }{ (1-\delta)}  \Ex \|\grad f(\bar{x}^r)\|^2 
			+   \tfrac{4 \alpha^4 \uptau^2 L^2 \|	 \widehat{\B}^{-1}\|  N }{ (1-\delta)}  \textstyle \Ex \|  \sum\limits_{t}  \overline{\nabla f}(\bm{\Phi}^r_t) \|^2
			\nonumber \\
			& \quad +  \frac{32 \alpha^4 \uptau^3 L^2  N  \sigma^2 }{ (1-\delta)}      + \frac{4 \alpha^4 \uptau^3 L^2 \|	 \widehat{\B}^{-1}\|  \sigma^2 }{ (1-\delta)}  + 4 \alpha^2 \uptau   N \sigma^2 .
		\end{align}
		
	\end{proof}

	\subsection{Nonconvex case (Theorem \ref{thm:noncvx})}
The nonconvex proof begins with the  following bound for any $L$-smooth function $f$ \cite{nesterov2013introductory}:
	\begin{align}
		f(y) &\leq f(z)+ \langle \grad f(z),~ y-z \rangle+\tfrac{L}{2} \|y-z\|^2, \quad \forall~y,z \in \real^N. \label{bound:L_smooth_function}
	\end{align}
	Recall from \eqref{error_avg_progress} that $\bar{x}^{r+1}=\bar{x}^r -  \frac{\alpha}{N} \sum_{t=0}^{\uptau-1} \sum_{i=1}^N \big( \nabla f_i(\phi^r_{i,t}) + s^r_{i,t} \big)$.	Substituting $y=\bar{x}^{r+1}$ and $z=\bar{x}^{r}$ into  inequality \eqref{bound:L_smooth_function}, and taking conditional expectation, we get
	\begin{align}
		\textstyle	\Ex_r   f(\bar{x}^{r+1}) 
		&\leq f(\bar{x}^{r}) - \alpha \textstyle \Ex_r \big\langle \grad f(\bar{x}^{r}),  \textstyle \tfrac{1}{N} \sum\limits_{t} \sum\limits_{i} \big( \nabla f_i(\phi^r_{i,t}) + s^r_{i,t} \big) \big\rangle  +\tfrac{\alpha^2 L}{2} \Ex_r \| \textstyle \tfrac{1}{N} \sum\limits_{t} \sum\limits_{i} \big( \nabla f_i(\phi^r_{i,t}) + s^r_{i,t}) \|^2  \nonumber \\
		&\leq f(\bar{x}^{r}) - \alpha \textstyle \Ex_r  \textstyle   \big\langle \grad f(\bar{x}^{r}), \frac{1}{N}  \sum\limits_{t} \sum\limits_{i} \nabla f_i(\phi^r_{i,t})  \big\rangle 
		+ \alpha^2 \uptau L  \textstyle \sum\limits_{t}  \| \textstyle \tfrac{1}{N} \sum\limits_{i} \nabla f_i(\phi^r_{i,t})  \|^2 +  	\tfrac{\alpha^2  \uptau L \sigma^2}{N},
	\end{align}
	where $\Ex_{r}$ denote the expectation conditioned on the all iterates up to $r$. The second inequality holds by using  Jensen's inequality and  Assumption \ref{assump:noise}. 	Using $2 \langle a, b \rangle = \|a\|^2 + \|b\|^2 - \|a - b\|^2$, we have
	\begin{align}
		-   &  \textstyle   \big\langle \grad f(\bar{x}^{r}), \frac{1}{N}  \sum\limits_{t} \sum\limits_{i} \nabla f_i(\phi^r_{i,t})  \big\rangle \nonumber \\
		&=	-     \textstyle \sum\limits_{t}    \big\langle \grad f(\bar{x}^{r}), \frac{1}{N}  \sum\limits_{i} \nabla f_i(\phi^r_{i,t})  \big\rangle \nonumber \\
		&=  - \tfrac{\uptau}{2}  \|\grad f(\bar{x}^{r})\|^2 - \textstyle \tfrac{1}{2}  \sum\limits_{t}  \| \textstyle \tfrac{1}{N}  \sum\limits_{i} \nabla f_i(\phi^r_{i,t}) \|^2 + \tfrac{1}{2}  \sum\limits_{t}  \|\textstyle \tfrac{1}{N} \sum\limits_{i} \nabla f_i(\phi^r_{i,t})-\grad f(\bar{x}^{r}) \|^2 \nonumber \\
		&\leq   - \tfrac{\uptau}{2}  \|\grad f(\bar{x}^{r})\|^2 - \textstyle \tfrac{1}{2}  \sum\limits_{t}  \| \textstyle \tfrac{1}{N}  \sum\limits_{i} \nabla f_i(\phi^r_{i,t}) \|^2 + \tfrac{1}{2 N}  \sum\limits_{t} \sum\limits_{i} \|   \nabla f_i(\phi^r_{i,t})-\grad f_i(\bar{x}^{r}) \|^2  \nonumber \\
		&\leq   - \tfrac{\uptau}{2}  \|\grad f(\bar{x}^{r})\|^2 - \textstyle \tfrac{1}{2}  \sum\limits_{t}  \| \textstyle \tfrac{1}{N}  \sum\limits_{i} \nabla f_i(\phi^r_{i,t}) \|^2 + \tfrac{L^2}{2 N}   \| \bm{\widehat{\Phi}}^r \|^2,
	\end{align} 
where the second bound holds from Jensen's inequality \eqref{jensens}. 	Combining the last two equations and taking expectation yields
	\begin{align}
		\Ex   f(\bar{x}^{r+1})  &\leq \Ex f(\bar{x}^{r}) - \tfrac{\alpha \uptau}{2} \Ex \| \grad f(\bar{x}^{r})  \|^2 - \tfrac{\alpha}{2} (1-2\alpha \uptau L )  \textstyle \sum\limits_{t} \Ex \| \tfrac{1}{N}  \sum\limits_{i} \nabla f_i(\phi^r_{i,t})\|^2 \nonumber \\
		& \quad +\tfrac{\alpha L^2}{2 N}   \Ex \| \bm{\widehat{\Phi}}^r \|^2 +	\tfrac{  \alpha^2 \uptau L \sigma^2 }{ N}. \label{2bndsbd}
	\end{align} 
	Substituting the bound \eqref{drift_bound} into inequality \eqref{2bndsbd} and taking expectation yields
	\begin{equation} 	
		\begin{aligned}
			\Ex   f(\bar{x}^{r+1}) &\leq  \Ex  f(\bar{x}^{r}) - \tfrac{\alpha \uptau}{2} (1 - 16 \alpha^2 L^2 \uptau^2) \Ex  \| \grad f(\bar{x}^{r})  \|^2  - \tfrac{\alpha}{2 } (1- 2 \alpha L \uptau) \textstyle \sum\limits_{t}  \| \tfrac{1}{N} \sum\limits_{i} \nabla f_i(\phi^r_{i,t})\|^2 \nonumber \\ 
			& \quad +\tfrac{32 \alpha \uptau L^2   }{N}  \Ex  \|\widehat{\d}^r\|^2   + 2 \alpha^3 \uptau^2 L^2    \sigma^2 +	\tfrac{  \alpha^2 \uptau L \sigma^2 }{ N} .
		\end{aligned}	
	\end{equation}
	When  $\alpha \leq \frac{1}{4 \sqrt{2} \uptau L}$, we can upper bound the previous inequality by
	\begin{equation} \label{descent_inquality}	
		\begin{aligned}
			\Ex   f(\bar{x}^{r+1}) &\leq  \Ex  f(\bar{x}^{r}) - \tfrac{\alpha \uptau}{4}  \Ex  \| \grad f(\bar{x}^{r})  \|^2  - \tfrac{ \alpha}{4}  \textstyle  \sum\limits_{t} \|   \tfrac{1}{N} \sum\limits_{i} \nabla f_i(\phi^r_{i,t})\|^2 \nonumber \\ 
			& \quad +\tfrac{32 \alpha \uptau L^2   }{N}  \Ex  \|\widehat{\d}^r\|^2
			+ 2 \alpha^3 \uptau^2 L^2    \sigma^2 +	\tfrac{  \alpha^2 \uptau L \sigma^2 }{ N} .
		\end{aligned}	
	\end{equation}
		Rearranging  we get
		\begin{align} \label{grad_ineq_noncvx_proof}
		\cE_r 		  &\leq \tfrac{4}{\alpha \uptau} \left( \Ex  \tilde{f}(\bar{x}^{r}) -  \Ex   \tilde{f}(\bar{x}^{r+1}) \right)
			+\tfrac{128  L^2  }{N}  \Ex  \|\widehat{\d}^r\|^2 + 8 \alpha^2 \uptau L^2    \sigma^2 +	\tfrac{ 4 \alpha  L \sigma^2 }{ N},
		\end{align}	
		where  $\cE_r \define  \textstyle   \Ex  \| \grad f(\bar{x}^{r})  \|^2  +  \frac{1}{\uptau} \sum\limits_{t} \|  \tfrac{1}{N}  \sum\limits_{i} \nabla f_i(\phi^r_{i,t})\|^2$ and $\tilde{f}(\bar{x}^{r})\define f(\bar{x}^{r})-f^\star$. Averaging over $r=0,1,\ldots,R-1$  and using $-\tilde{f}(\bar{x}^{r})\leq 0$, it holds that
		\begin{align} \label{sum_grad_ineq_noncvx_proof}
			\frac{1}{R}	 \sum_{r=0}^{R-1} \cE_r  		 
			&\leq \frac{4   \tilde{f}(\bar{x}^{0})}{\alpha \uptau R}	  
			+ \frac{128  L^2    }{ N R}	 \sum_{r=0}^{R-1}   \Ex  \|\widehat{\d}^r\|^2 + 8 \alpha^2 \uptau L^2    \sigma^2 +	\tfrac{ 4 \alpha  L \sigma^2 }{ N}.
		\end{align}		
			We now bound the term $ \sum_{r=0}^{R-1}   \Ex  \|\widehat{\d}^r\|^2$. 	Using $
		\delta + \tfrac{512 \uptau^2 \alpha^2 L^2      }{ (1-\delta)}  \leq \frac{1+ \delta}{2} \define \bar{\delta}$, \ie, 
	\begin{align} \label{ss_noncvx_1}
		\alpha \leq \frac{1-\delta}{16 \sqrt{2} \uptau L }
	\end{align}
		 in \eqref{cons_ineq_lemma}, we have 
				\begin{align}
				\textstyle \Ex \|\widehat{\d}^{r+1}\|^2 
				& \leq   \bar{\delta}  \Ex \|\widehat{\d}^r\|^2  +  \textstyle \tfrac{128 \alpha^4 \uptau^4  L^2 N }{ (1-\delta)}  \Ex \|\grad f(\bar{x}^r)\|^2 
				+   \tfrac{4 \alpha^4 \uptau^2 L^2 \|	 \widehat{\B}^{-1}\|  N }{ (1-\delta)}  \textstyle \Ex \|  \sum\limits_{t}  \overline{\nabla f}(\bm{\Phi}^r_t) \|^2
				\nonumber \\
				& \quad +  \frac{32 \alpha^4 \uptau^3 L^2  N  \sigma^2 }{ (1-\delta)}      + \frac{4 \alpha^4 \uptau^3 L^2 \|	 \widehat{\B}^{-1}\|  \sigma^2 }{ (1-\delta)}  + 4 \alpha^2 \uptau   N \sigma^2 
				\nonumber \\
				& \leq   \bar{\delta}  \Ex \|\widehat{\d}^r\|^2  +  \textstyle \tfrac{128 \alpha^4 \uptau^4  L^2  \|	 \widehat{\B}^{-1}\| N }{ (1-\delta)}  \cE_r
				+  5 \alpha^2 \uptau   N \sigma^2.  
			\end{align}
		The last step uses Jensen's inequality and $ \frac{36 \alpha^2 \uptau^2 L^2 \|	 \widehat{\B}^{-1}\| }{ (1-\delta)}        \leq  1   $, \ie, 
		\begin{align}
			\label{ss_noncvx_2}
			\alpha \leq \frac{ \sqrt{(1-\delta)/\|	 \widehat{\B}^{-1}\|}}{6 \uptau L  } .
		\end{align}
			Iterating gives 
		\begin{align} \label{cons_ineq_noncvx_proof}
			\Ex \|\widehat{\d}^{r}\|^2 &\leq  
			\bar{\delta}^r  \|\widehat{\d}^{0}\|^2   
			+ \tfrac{128 \alpha^4 \uptau^4  L^2  \|	 \widehat{\B}^{-1}\| N }{ (1-\delta)}  \sum_{\ell=0}^{r-1} \left( \tfrac{1+\delta}{2} \right)^{r-1-\ell}     \cE_\ell   +  \frac{10 \alpha^2 \uptau   N \sigma^2}{(1-\delta)}.
		\end{align}  
		Averaging over $r=1,\dots,R$ and using \eqref{ss_noncvx_2}, it holds that
		\begin{align} 
			\frac{1}{R} \sum_{r=1}^R	\Ex \|\widehat{\d}^{r}\|^2 	  &\leq  
			\frac{2 \|\widehat{\d}^{0}\|^2}{(1-\delta)R} 
			+    \tfrac{4 \alpha^2 \uptau^2   N }{  R} \sum_{r=1}^R  \sum_{\ell=0}^{r-1} \left( \tfrac{1+\delta}{2} \right)^{r-1-\ell}   \cE_\ell    +   \frac{10 \alpha^2 \uptau   N \sigma^2}{(1-\delta)} \nonumber \\
			& \leq  
			\frac{2 \|\widehat{\d}^{0}\|^2}{(1-\delta)R} 
			+  \tfrac{8 \alpha^2 \uptau^2   N }{(1-\delta)  R}   \sum_{r=0}^{R-1}    \cE_r   +  \frac{10 \alpha^2 \uptau   N \sigma^2}{(1-\delta)}.
		\end{align}  
		Adding $\frac{\|\widehat{\d}^{0}\|^2}{R}$ to both sides of the previous inequality and using $\frac{\|\widehat{\d}^{0}\|^2}{R} \leq \frac{\|\widehat{\d}^{0}\|^2}{(1-\delta)R}$, we get
		\begin{align}  \label{bound_noncvx_cons_final}
			\frac{1}{R} \sum_{r=0}^{R-1}	\Ex \|\widehat{\d}^{r}\|^2  	  &\leq  
			\frac{3 \|\widehat{\d}^{0}\|^2}{(1-\delta)R} 
			+   \frac{8 \alpha^2 \uptau^2   N }{(1-\delta)  R}   \sum_{r=0}^{R-1}     \cE_r +    \frac{10 \alpha^2 \uptau   N \sigma^2}{(1-\delta)}.
		\end{align}  
	Substituting inequality \eqref{bound_noncvx_cons_final} into \eqref{sum_grad_ineq_noncvx_proof} and rearranging, we obtain 
		\begin{align} 
			\left(1-\tfrac{1024 \alpha^2 \uptau^2 L^2   }{(1-\delta)  }  \right)	\frac{1}{R}	 \sum_{r=0}^{R-1} \cE_r 		 
			&\leq \frac{4   \tilde{f}(\bar{x}^{0})}{\alpha \uptau R}	  
			+ \frac{384 L^2    \|\widehat{\d}^{0}\|^2}{ (1-\delta) N R} \nonumber  \\
			& \quad + \frac{1280 \alpha^2 \uptau  L^2  \sigma^2}{(1-\delta)}
			+ 8  \alpha^2 \uptau L^2    \sigma^2 +	\frac{ 4 \alpha  L \sigma^2 }{ N}.
		\end{align}
		If we set 
		\begin{align} \label{ss_noncvx_3}
			\frac{1}{2} \leq 1-\tfrac{1024 \alpha^2 \uptau^2 L^2   }{(1-\delta)  }   \Longrightarrow \alpha \leq \frac{\sqrt{1-\delta}}{32 \sqrt{2}  \uptau L }, 
		\end{align}	
	then it holds that
		\begin{align} 
			\frac{1}{R}	 \sum_{r=0}^{R-1} \cE_r 		 
		&\leq \frac{8   \tilde{f}(\bar{x}^{0})}{\alpha \uptau R}	  
		+ \frac{768 L^2    \|\widehat{\d}^{0}\|^2}{ (1-\delta) N R} + \frac{2560 \alpha^2 \uptau  L^2  \sigma^2}{(1-\delta)}
		+ 16  \alpha^2 \uptau L^2    \sigma^2 +	\frac{ 8 \alpha  L \sigma^2 }{ N}.
		\end{align}
	Now from  \eqref{dhat_bound}, we can bound $\|\widehat{\d}^{0} \|^2$ by
		\begin{align} \label{0_d_hat_bound}
			\|\widehat{\d}^{0}\|^2 &\leq  \|	\widehat{\mathbf{\Lambda}}^{-\frac{1}{2}} \widehat{\Q}\tran\x^{0}\|^2 + \|\widehat{\B}^{-\frac{1}{2}} \widehat{\Q}\tran\z^{0}\|^2  \nonumber \\
			&\leq   \frac{1}{\underline{\lambda}} \|\x^{0}-\bar{\x}^0\|^2 + 2 \|\widehat{\B}^{-1}\| \|\y^{0}-\bar{\y}^0\|^2 + 2 \alpha^2 \uptau^2 \|\widehat{\B}^{-1}\| \|\grad \f(\bar{\x}^0) - \one \otimes \grad f(\bar{\x}^0) \|^2 \nonumber \\
			&\leq   \frac{1}{\underline{\lambda}} \|\x^{0}-\bar{\x}^0\|^2 + \frac{2 \alpha^2 \uptau^2}{1-\lambda} \|\grad \f(\bar{\x}^0) - \one \otimes \grad f(\bar{\x}^0) \|^2,
		\end{align}
	where the second inequality we used \eqref{z_def} ($\z^0 = \y^0 +\alpha \uptau  \grad \f(\bar{\x}^0) $) and Jensen's inequality. The last inequality holds under initialization $\y^0=\zero$.
		We conclude that 
			\begin{align} \label{last_noncvx_thm_proof}
			\frac{1}{R}	 \sum_{r=0}^{R-1} \cE_r 		 
			&\leq \frac{8   \tilde{f}(\bar{x}^{0})}{\alpha \uptau R}	  
			+ \frac{768 L^2      }{ (1-\delta) \underline{\lambda} N R}
			  \|\x^{0}-\bar{\x}^0\|^2  + \frac{1536 \alpha^2 \uptau^2 L^2    \varsigma_0^2 }{ (1-\delta) (1-\lambda)  R} \nonumber \\
			 & \quad +  \frac{2560 \alpha^2 \uptau  L^2  \sigma^2}{(1-\delta)}
			+ 16 \alpha^2 \uptau L^2    \sigma^2 +	\frac{ 8 \alpha  L \sigma^2 }{ N},
		\end{align}
		where $\varsigma_0^2 = \frac{1}{N} \|\grad \f(\bar{\x}^0) - \one \otimes \grad f(\bar{\x}^0) \|^2$.
		
\subsection{Convex cases (Theorem \ref{thm:cvx})}
	 Recall from \eqref{error_avg_progress} that $	\bar{x}^{r+1}=\bar{x}^r -  \frac{\alpha}{N} \sum_{t=0}^{\uptau-1} \sum_{i=1}^N \big( \nabla f_i(\phi^r_{i,t}) + s^r_{i,t} \big)$. Thus, it holds that
	\begin{align} 
		\textstyle	\Ex_{r} \|\bar{x}^{r+1}-x^\star \|^2
		&=  \|\bar{x}^r-x^\star\|^2
		- \tfrac{2\alpha}{N}\textstyle  \Ex_r  \langle (\bar{x}^r-x^\star),  \textstyle \sum\limits_{t} \sum\limits_{i} \big( \nabla f_i(\phi^r_{i,t}) + s^r_{i,t} \big)  \rangle 
		\nonumber 	\\
		& \quad + \alpha^2 \textstyle \Ex_{r} \| \tfrac{1}{N} \textstyle \sum\limits_{t} \sum\limits_{i} \big( \nabla f_i(\phi^r_{i,t}) + s^r_{i,t} \big)\|^2 \nonumber \\
		&\leq  \|\bar{x}^r-x^\star\|^2
		- \tfrac{2\alpha}{N}\textstyle  \Ex_r \langle  (\bar{x}^r-x^\star),  \textstyle  \sum\limits_{i }   \sum\limits_{t}  \grad f_i(\phi^{r}_{i,t})  \rangle 
		\nonumber 	\\
		& \quad + 2 \alpha^2 \textstyle \Ex_{r} \| \tfrac{1}{N} \textstyle \sum\limits_{i}   \sum\limits_{t}  \grad f_i(\phi^{r}_{i,t})\|^2 + 2 \alpha^2 \Ex \|\tfrac{1}{N} \textstyle \sum\limits_{t} \sum\limits_{i}  s^r_{i,t}\|^2 \nonumber \\ 
		&\leq  \|\bar{x}^r-x^\star\|^2
		- \tfrac{2\alpha}{N} \textstyle \Ex_r \langle  (\bar{x}^r-x^\star),  \textstyle  \sum\limits_{i }   \sum\limits_{t}  \grad f_i(\phi^{r}_{i,t})  \rangle 
		\nonumber 	\\
		& \quad + 2 \alpha^2  \textstyle \Ex_{r} \| \tfrac{1}{N} \textstyle \sum\limits_{i}   \sum\limits_{t}  \grad f_i(\phi^{r}_{i,t})\|^2 + \frac{2 \alpha^2 \uptau \sigma^2 }{N}. \label{avg_progress_cvx0}
	\end{align} 
	The first inequality we used the zero mean condition \eqref{noise_bound_eq_mean} and Jensen's inequality. 	The second  inequality holds from bounded noise variance condition  from Assumption \ref{assump:noise} and Jensen's inequality. We now bound the cross term by using the bound \cite[Lemma 5]{karimireddy2019scaffold}:
	\begin{align} \label{cvx_bnd}
		\big\langle (z-y), \grad g(x) \big\rangle \geq g(z)-g(y)+\frac{\mu}{4} \|y-z\|^2 - L \|z-x\|^2, \quad \forall~x,y,z \in \real^N,
	\end{align}
 for any $L$-smooth and $\mu$-strongly convex function $g$. 	Using \eqref{cvx_bnd}, we can bound the cross term as follows
	\begin{align*}
	 -\tfrac{2\alpha}{N}    \textstyle \sum\limits_{i}   \sum\limits_{t}  \left\langle (\bar{x}^r-x^\star), \grad f_i(\phi^{r}_{i,t}) \right\rangle 
		& \leq \tfrac{2\alpha}{N} \textstyle  \sum\limits_{i}  \sum\limits_{t}   \left(f_i(x^\star)-f_i(\bar{x}^r)-\frac{\mu}{4} \|\bar{x}^r-x^\star\|^2 + L  \|\bar{x}^r-\phi^{r}_{i,t}\|^2 \right) \nonumber \\
		& = -2 \alpha \uptau \left(f(\bar{x}^r)-f(x^\star)+\tfrac{\mu}{4} \|\bar{x}^r-x^\star\|^2 \right) +   \tfrac{2\alpha L }{N} \|\bm{\widehat{\Phi}}^r\|^2,
	\end{align*}
where $
\|\bm{\widehat{\Phi}}^r\|^2 = 	  \sum_{i =1}^N  \sum_{t=0}^{\uptau-1}   \|\phi^{r}_{i,t}-\bar{x}^r\|^2$. 	Substituting the previous  bound into  \eqref{avg_progress_cvx0} and taking expectation gives
\begin{align} \label{avg_progress_cvx00}
	\Ex \|\bar{x}^{r+1}-x^\star \|^2 
	&\leq  (1-\tfrac{\mu \uptau \alpha }{2} ) \Ex \|\bar{x}^r-x^\star\|^2 - 2 \alpha \uptau \Ex \big(f(\bar{x}^r)-f(x^\star) \big)     \nonumber \\ 
	& \quad +    \tfrac{2\alpha L}{N} \Ex \|\bm{\widehat{\Phi}}^r\|^2 + 
	2 \alpha^2  \textstyle \Ex \|  \sum\limits_{t}  \overline{\nabla f}(\bm{\Phi}^r_t) \|^2  + \tfrac{2 \alpha^2 \uptau \sigma^2 }{N},
\end{align} 
where $ \overline{\nabla f}(\bm{\Phi}^r_t)= \tfrac{1}{N} \textstyle \sum\limits_{i}     \grad f_i(\phi^{r}_{i,t})$.	Note  that
	\begin{align}
 \textstyle  \|  \sum\limits_{t}  \overline{\nabla f}(\bm{\Phi}^r_t) \|^2=	\| \tfrac{1}{N} \textstyle \sum\limits_{i}   \sum\limits_{t}  \grad f_i(\phi^{r}_{i,t})\|^2 &=	\| \tfrac{1}{N} \textstyle \sum\limits_{i}   \sum\limits_{t}  \grad f_i(\phi_i^{t})-\grad f_i(\bar{x}^r)+\grad f_i(\bar{x}^r) \|^2  \nonumber \\
		& \leq  2  \left\| \tfrac{1}{N} \textstyle \sum\limits_{i}   \sum\limits_{t}  \grad f_i(\phi^{r}_{i,t})-\grad f_i(\bar{x}^r) \right\|^2 +2  \uptau^2  \left\| \tfrac{1}{N} \textstyle \sum\limits_{i}       \grad f_i(\bar{x}^r) \right\|^2 \nonumber \\
		& \leq  \tfrac{2 \uptau}{N} \textstyle \sum\limits_{i}   \sum\limits_{t}  \| \grad f_i(\phi^{r}_{i,t})-\grad f_i(\bar{x}^r)\|^2 +2  \uptau^2 \| \grad f(\bar{x}^r)\|^2 \nonumber \\
		& \leq  \tfrac{2 \uptau L^2 }{N}  \|\bm{\widehat{\Phi}}^r\|^2 +2  \uptau^2 \| \grad f(\bar{x}^r)\|^2.
		\label{grad_drift_bound}
	\end{align}
	Substituting the previous  bound into  \eqref{avg_progress_cvx00}  gives 	
	\begin{align} 
		\Ex \|\bar{x}^{r+1}-x^\star \|^2 
		&\leq  (1-\tfrac{\mu \uptau \alpha }{2} ) \Ex \|\bar{x}^r-x^\star\|^2 - 2 \alpha \uptau \Ex \big(f(\bar{x}^r)-f(x^\star) \big)     \nonumber \\ 
		& \quad +4 \alpha^2 \uptau^2  \Ex \|\grad f(\bar{x}^r)\|^2  +  \tfrac{2 \alpha L}{N}  (1+  2 \alpha \uptau L) \Ex \|\bm{\widehat{\Phi}}^r\|^2 + \frac{2 \alpha^2 \uptau \sigma^2 }{N} \nonumber \\
			&\leq  (1-\tfrac{\mu \uptau \alpha }{2} ) \Ex \|\bar{x}^r-x^\star\|^2 - 2 \alpha \uptau \Ex \big(f(\bar{x}^r)-f(x^\star) \big)     \nonumber \\ 
		& \quad +4 \alpha^2 \uptau^2  \Ex \|\grad f(\bar{x}^r)\|^2  +  \tfrac{3 \alpha L}{N}   \Ex \|\bm{\widehat{\Phi}}^r\|^2 + \frac{2 \alpha^2 \uptau \sigma^2 }{N}.
	\end{align} 
The last inequality holds for $1+  2 \alpha \uptau L \leq 3/2$ or
\begin{align} \label{ss_cvx_1}
	\alpha \leq \frac{1}{4 \uptau L}.
\end{align}
 	 Substituting the  bound  \eqref{drift_bound} into the above inequality gives 
	\begin{align} 
			\Ex \|\bar{x}^{r+1}-x^\star \|^2 
		&\leq  (1-\tfrac{\mu \uptau \alpha }{2} ) \Ex \|\bar{x}^r-x^\star\|^2 - 2 \alpha \uptau \Ex \big(f(\bar{x}^r)-f(x^\star) \big)     \nonumber \\ 
		& \quad + (4 \alpha^2 \uptau^2 + 48 \alpha^3 \uptau^3  L  )  \Ex \|\grad f(\bar{x}^r)\|^2  +     \tfrac{192 \alpha \uptau L }{N}  \Ex \|\widehat{\d}^r\|^2+         12 \alpha^3 \uptau^2 L^2 \sigma^2 + \frac{2 \alpha^2 \uptau \sigma^2 }{N} \nonumber \\
		&\leq  (1-\tfrac{\mu \uptau \alpha }{2} ) \Ex \|\bar{x}^r-x^\star\|^2 - (2 \alpha \uptau -8  \alpha^2 \uptau^2 L  - 96 \alpha^3 \uptau^3  L^2   ) \Ex \big(f(\bar{x}^r)-f(x^\star) \big)     \nonumber \\ 
		& \quad   +     \tfrac{192 \alpha \uptau L  }{N}  \Ex \|\widehat{\d}^r\|^2+         12 \alpha^3 \uptau^2 L^2 \sigma^2 + \frac{2 \alpha^2 \uptau \sigma^2 }{N}.
	 \nonumber
	\end{align} 
In the last step, we used  $\|\grad f(\bar{x}^r)\|^2 \leq 2 L (f(\bar{x}^r)-f(x^\star))$, which is satisfied under Assumption \ref{assump:cvx} \cite{nesterov2013introductory}. Using $(2 \alpha \uptau -8 L \alpha^2 \uptau^2 - 96 \alpha^3 \uptau^3  L^2   ) \leq \alpha \uptau$, \ie,
\begin{align} \label{ss_cvx_2}
	\alpha \leq \frac{1}{32 L \uptau},
\end{align}
we get
	\begin{align} 
	\Ex \|\bar{x}^{r+1}-x^\star \|^2 
	&\leq  (1-\tfrac{\mu \uptau \alpha }{2} ) \Ex  \|\bar{x}^r-x^\star\|^2 -\alpha \uptau  \Ex [f(\bar{x}^r)-f(x^\star)]      \nonumber \\ 
	& \quad   + \tfrac{192 \alpha \uptau L  }{N}  \Ex \|\widehat{\d}^r\|^2  +    12 \alpha^3 \uptau^2  L      \sigma^2 + \frac{2\alpha^2 \uptau \sigma^2 }{N}.
	\label{avg_bound_cvx_proof}
\end{align} 
 	Starting from \eqref{cons_ineq_lemma}, we have
		\begin{align}
			\textstyle \Ex \|\widehat{\d}^{r+1}\|^2 
			& \leq         \textstyle \left(\delta + \tfrac{512  \alpha^2 \uptau^2 L^2      }{ (1-\delta)}  \right)  \Ex \|\widehat{\d}^r\|^2  +  \textstyle \tfrac{128 \alpha^4 \uptau^4  L^2 N }{ (1-\delta)}  \Ex \|\grad f(\bar{x}^r)\|^2 
			+   \tfrac{4 \alpha^4 \uptau^2 L^2 \|	 \widehat{\B}^{-1}\|  N }{ (1-\delta)}  \textstyle \Ex \|  \sum\limits_{t}  \overline{\nabla f}(\bm{\Phi}^r_t) \|^2
			\nonumber \\
			& \quad +  \frac{32 \alpha^4 \uptau^3 L^2  N  \sigma^2 }{ (1-\delta)}      + \frac{4 \alpha^4 \uptau^3 L^2 \|	 \widehat{\B}^{-1}\|  \sigma^2 }{ (1-\delta)}  + 4 \alpha^2 \uptau   N \sigma^2 
			\nonumber \\ 
				\overset{\eqref{grad_drift_bound}}&{\leq}  
			\textstyle \left(\delta + \tfrac{512  \alpha^2 \uptau^2 L^2      }{ (1-\delta)}  \right)   \Ex \|\widehat{\d}^r\|^2  +  \textstyle \tfrac{136 \alpha^4 \uptau^4 L^2 \|	 \widehat{\B}^{-1}\| N  }{ (1-\delta)}  \Ex \|\grad f(\bar{x}^r)\|^2 
			+   \tfrac{8 \alpha^4 \uptau^3  L^4 \|	 \widehat{\B}^{-1}\|   }{ (1-\delta)}\Ex \|\bm{\widehat{\Phi}}^r\|^2 
			\nonumber \\
			& \quad   + \frac{32 \alpha^4 \uptau^3 L^2  N  \sigma^2 }{ (1-\delta)}      + \frac{4 \alpha^4 \uptau^3 L^2 \|	 \widehat{\B}^{-1}\|  \sigma^2 }{ (1-\delta)}  + 4 \alpha^2 \uptau   N \sigma^2 \nonumber \\
				& \leq 
			\textstyle \left(\delta + \tfrac{512  \alpha^2 \uptau^2 L^2      }{ (1-\delta)}  \right)   \Ex \|\widehat{\d}^r\|^2  +  \textstyle \tfrac{136 \alpha^4 \uptau^4 L^2 \|	 \widehat{\B}^{-1}\| N  }{ (1-\delta)}  \Ex \|\grad f(\bar{x}^r)\|^2 
			+   \tfrac{8 \alpha^4 \uptau^3  L^4 \|	 \widehat{\B}^{-1}\|   }{ (1-\delta)}\Ex \|\bm{\widehat{\Phi}}^r\|^2 
			\nonumber \\
			& \quad   + 5 \alpha^2 \uptau   N \sigma^2,
		\end{align}
where the last inequality holds when	$ \frac{36 \alpha^2 \uptau^2 L^2 \|	 \widehat{\B}^{-1}\| }{ (1-\delta)}        \leq  1   $, \ie,
\begin{align} \label{ss_cvx_3}
	\alpha \leq  \frac{\sqrt{(1-\delta)/ \|	 \widehat{\B}^{-1}\| }}{6  \uptau L}.
\end{align}
	Substituting the bound \eqref{drift_bound} into the above inequality yields 
			\begin{align}
		\textstyle \Ex \|\widehat{\d}^{r+1}\|^2 
		& \leq      \left(\delta +\tfrac{512 \uptau^2 \alpha^2 L^2      }{ (1-\delta)}  +  \tfrac{512 \alpha^4 \uptau^4   L^4 \|	 \widehat{\B}^{-1}\|   }{ (1-\delta)}  \right)  \Ex \|\widehat{\d}^r\|^2
		\nonumber  \\
		& \quad  +  \textstyle \tfrac{136 \alpha^4 \uptau^4  L^2   \|	 \widehat{\B}^{-1}\|  N}{ (1-\delta)}  \Ex \|\grad f(\bar{x}^r)\|^2
		  +  \textstyle \tfrac{128 \alpha^6 \uptau^6 L^4  \|	 \widehat{\B}^{-1}\|  N  }{ (1-\delta)}  \Ex \|\grad f(\bar{x}^r)\|^2 
		\nonumber \\
		& \quad  +     \frac{32 \alpha^6 \uptau^5  L^4  \|	 \widehat{\B}^{-1}\|   N  \sigma^2 }{ (1-\delta)}      + 5 \alpha^2 \uptau   N \sigma^2 \nonumber \\
			\overset{\eqref{ss_cvx_2}}&{\leq}       \left(\delta + \tfrac{512 \uptau^2 \alpha^2 L^2      }{ (1-\delta)}  +  \tfrac{512 \alpha^4 \uptau^4   L^4 \|	 \widehat{\B}^{-1}\|   }{ (1-\delta)}  \right) \Ex \|\widehat{\d}^r\|^2 +  \textstyle \tfrac{137 \alpha^4 \uptau^4  L^2   \|	 \widehat{\B}^{-1}\|  N}{ (1-\delta)}  \Ex \|\grad f(\bar{x}^r)\|^2
		\nonumber  \\
		& \quad  
	 +     \frac{ \alpha^4 \uptau^3  L^2  \|	 \widehat{\B}^{-1}\|   N  \sigma^2 }{ (1-\delta)}      + 5 \alpha^2 \uptau   N \sigma^2
	 \nonumber \\
	 \overset{ \eqref{ss_cvx_3}}&{\leq}       \left(\delta +\tfrac{527 \uptau^2 \alpha^2 L^2      }{ (1-\delta)}   \right)  \Ex \|\widehat{\d}^r\|^2 +  \textstyle (4 \alpha^2 \uptau^2   N)  \Ex \|\grad f(\bar{x}^r)\|^2
   + 6 \alpha^2 \uptau   N \sigma^2.
	\end{align} 
	Using the condition $\delta +\tfrac{527 \uptau^2 \alpha^2 L^2      }{ (1-\delta)}   \leq \frac{1+\delta}{2} \define \bar{\delta}$, which holds when
	\begin{align} \label{ss_cvx_4}
		\alpha \leq \frac{1-\delta}{33 \uptau L},
	\end{align}
	 the right hand side can be upper bounded by
		\begin{align}
	\textstyle \Ex \|\widehat{\d}^{r+1}\|^2 
	& \leq     \bar{\delta}  \Ex \|\widehat{\d}^r\|^2 +  \textstyle (4 \alpha^2 \uptau^2   N)    \Ex \|\grad f(\bar{x}^r)\|^2
	+ 6 \alpha^2 \uptau   N \sigma^2.
	\label{cvx_dhat_bound}
\end{align}

	\subsubsection{Convex case $\mu=0$}
	For convex but not strongly-convex, we have $\mu=0$ and equation \eqref{avg_bound_cvx_proof} becomes
	\begin{align} 
		\Ex \|\bar{x}^{r+1}-x^\star \|^2 
		&\leq    \Ex  \|\bar{x}^r-x^\star\|^2 -\alpha \uptau  \Ex [f(\bar{x}^r)-f(x^\star)]    + \tfrac{192 \alpha \uptau L   }{N}  \Ex \|\widehat{\d}^r\|^2    \nonumber \\ 
		& \quad   +    12 \alpha^3 \uptau^2 L       \sigma^2 + \frac{2 \alpha^2  \uptau \sigma^2 }{N}.
	\end{align}
	Rearranging gives
\begin{align}
	\cE_r 		  &\leq \tfrac{1}{\alpha \uptau} \left( \Ex \|\bar{x}^{r}-x^\star \|^2 - \Ex \|\bar{x}^{r+1}-x^\star \|^2 \right)
	+\tfrac{192  L  }{N}  \Ex  \|\widehat{\d}^r\|^2 + 12 \alpha^2 \uptau L    \sigma^2 +	\tfrac{ 2 \alpha   \sigma^2 }{ N},
\end{align}	
where $\cE_r \define  \textstyle    \Ex [f(\bar{x}^r)-f(x^\star)]  $. The above equation is similar to the nonconvex equation \eqref{grad_ineq_noncvx_proof} with the only  difference being the error criteria (and constants).  Therefore, the analysis follows using similar arguments used in the nonconvex case. Averaging $r=0,1,\ldots,R-1$  and using $-\tilde{f}(\bar{x}^{r})\leq 0$, it holds that 
\begin{align} \label{sum_grad_ineq_cvx_proof}
	\frac{1}{R}	 \sum_{r=0}^{R-1} \cE_r  		 
	&\leq \frac{    \|\bar{x}^{0}-x^\star \|^2}{\alpha \uptau R}	  
	+ \tfrac{192  L   }{N R}	 \sum_{r=0}^{R-1}   \Ex  \|\widehat{\d}^r\|^2 +  12 \alpha^2 L   \uptau \sigma^2 +	\tfrac{ 2 \alpha   \sigma^2 }{ N}.
\end{align}		
Plugging $\|\grad f(\bar{x}^r)\|^2 \leq 2 L (f(\bar{x}^r)-f(x^\star))$ into \eqref{cvx_dhat_bound} gives
		\begin{align}
		\textstyle \Ex \|\widehat{\d}^{r+1}\|^2
		& \leq    \bar{\delta} \Ex \|\widehat{\d}^r\|^2  +  (8 \alpha^2 \uptau^2  L N)     \cE_r
		+ 6 \alpha^2 \uptau   N \sigma^2.
	\end{align}
	Iterating and averaging over $r=1,\dots,R$
\begin{align} 
	\frac{1}{R} \sum_{r=1}^R	\Ex \|\widehat{\d}^{r}\|^2 	  &\leq  
	\frac{2 \|\widehat{\d}^{0}\|^2}{(1-\delta)R} 
	+    \tfrac{8 \alpha^2 \uptau^2  L N  }{ R}  \sum_{r=1}^R  \sum_{\ell=0}^{r-1} \left( \tfrac{1+\delta}{2} \right)^{r-1-\ell}   \cE_\ell    +  \frac{12 \alpha^2 \uptau  N \sigma^2}{(1-\delta)} \nonumber \\
	& \leq  
	\frac{2 \|\widehat{\d}^{0}\|^2}{(1-\delta)R} 
	+  \tfrac{16 \alpha^2 \uptau^2  L N}{(1-\delta)  R}  \sum_{r=0}^{R-1}    \cE_r   +  \frac{12 \alpha^2 \uptau  N \sigma^2}{(1-\delta)} .
\end{align}  
Adding $\frac{\|\widehat{\d}^{0}\|^2}{R}$ to both sides of the previous inequality and using $\frac{\|\widehat{\d}^{0}\|^2}{R} \leq \frac{\|\widehat{\d}^{0}\|^2}{(1-\delta)R}$, we get
\begin{align}  \label{bound_cvx_cons_final}
	\frac{1}{R} \sum_{r=0}^{R-1}	\Ex \|\widehat{\d}^{r}\|^2  	  &\leq  
	\frac{3 \|\widehat{\d}^{0}\|^2}{(1-\delta)R} 
	+   \frac{16 \alpha^2 \uptau^2  L N}{(1-\delta)  R}  \sum_{r=0}^{R-1}    \cE_r   +  \frac{12 \alpha^2 \uptau  N \sigma^2}{(1-\delta)}.
\end{align}   
Substituting inequality \eqref{bound_cvx_cons_final} into \eqref{sum_grad_ineq_cvx_proof} and rearranging, we obtain 
\begin{align} 
	\left(1-\tfrac{3072 \alpha^2  \uptau^2 L^2    }{(1-\delta)  } \right)	\frac{1}{R}	 \sum_{r=0}^{R-1} \cE_r 		 
	&\leq \frac{    \|\bar{x}^{0}-x^\star \|^2}{\alpha \uptau R}	  
	+ \frac{576 L     \|\widehat{\d}^{0}\|^2}{ (1-\delta) N R} \nonumber  \\
	& \quad + \frac{2304 \alpha^2 \uptau   L \sigma^2}{(1-\delta)}
+  12 \alpha^2 \uptau L    \sigma^2 +	\tfrac{ 2 \alpha   \sigma^2 }{ N}.
\end{align}
If we set 
\begin{align} \label{ss_cvx_5_cvx}
	\frac{1}{2} \leq 1-\frac{3072 \alpha^2  \uptau^2 L^2    }{(1-\delta)  }, \quad \Rightarrow \quad \alpha \leq \frac{\sqrt{1-\delta}}{100  \uptau L },
\end{align}	
then it holds that 
\begin{align} 
	\frac{1}{R}	 \sum_{r=0}^{R-1} \cE_r 		 
	&\leq  \frac{  2  \|\bar{x}^{0}-x^\star \|^2}{\alpha \uptau R}	  
	+ \frac{1152 L     \|\widehat{\d}^{0}\|^2}{ (1-\delta) N R}
	+  \frac{4608 \alpha^2 \uptau  L  \sigma^2}{(1-\delta)}
	+   24 \alpha^2 \uptau L    \sigma^2 +	\frac{ 4 \alpha   \sigma^2 }{ N}.
\end{align}
Plugging the bound \eqref{0_d_hat_bound} we get
\begin{align} \label{last_cvx_thm_proof}
	\frac{1}{R}	 \sum_{r=0}^{R-1} \cE_r 		 
	&\leq  \frac{  2  \|\bar{x}^{0}-x^\star \|^2}{\alpha \uptau R}	  
	+ \frac{1152 L     }{ (1-\delta) \underline{\lambda} N R}
 \|\x^{0}-\bar{\x}^0\|^2  + \frac{2304 \alpha^2 \uptau^2 L^2    \varsigma_0^2 }{ (1-\delta) (1-\lambda) R} \nonumber \\
	& \quad + 	  \frac{3072 \alpha^2 \uptau  L  \sigma^2}{(1-\delta)}
	+   24 \alpha^2 \uptau L    \sigma^2 +	\frac{ 4 \alpha   \sigma^2 }{ N},
\end{align}
where $\varsigma_0^2 = \frac{1}{N} \|\grad \f(\bar{\x}^0) - \one \otimes \grad f(\bar{\x}^0) \|^2$.

\subsubsection{Strongly-convex case $\mu > 0$}
From \eqref{avg_bound_cvx_proof} and  \eqref{cvx_dhat_bound}, it holds that
	\begin{align} 
	\Ex \|\bar{x}^{r+1}-x^\star \|^2 
	&\leq   (1-\tfrac{\mu \uptau \alpha }{2} ) \Ex  \|\bar{x}^r-x^\star\|^2  
	 + \tfrac{192 \alpha \uptau L  }{N}  \Ex \|\widehat{\d}^r\|^2      \nonumber \\ 
	& \quad   +    12 \alpha^3 \uptau^2  L      \sigma^2 + \frac{2\alpha^2 \uptau \sigma^2 }{N}.
\end{align}
and
	\begin{align}
	\textstyle \Ex \|\widehat{\d}^{r+1}\|^2 
	& \leq    \bar{\delta}  \Ex \|\widehat{\d}^r\|^2 +  \textstyle (4 \alpha^2 \uptau^2   L^2 N)   	\Ex \|\bar{x}^{r}-x^\star \|^2 
	+ 6 \alpha^2 \uptau   N \sigma^2,
\end{align}
where the last inequality follows from $\|\grad f(\bar{x}^r)\|^2 \leq L^2  \|\bar{x}^{r}-x^\star \|^2 $.	 It follows that
	\begin{align} \label{linear_dynamical_error2}
		\begin{bmatrix} 
			  \Ex  \|\bar{x}^{r+1}-x^\star\|^2 \\
	 \frac{1}{N}	\Ex \|\widehat{\d}^{r+1}\|^2  
		\end{bmatrix}
		\leq 
		\underbrace{\begin{bmatrix}
				1- \frac{\mu \uptau \alpha}{2}  \vspace{0.5mm} 		 
				&
			 192 \alpha \uptau L      \vspace{0.5mm} \\
			4 \alpha^2 \uptau^2 L^2   
				& 
				\tfrac{1+\delta}{2}
		\end{bmatrix}}_{\define A}
		\begin{bmatrix} 
			  \Ex  \|\bar{x}^r-x^\star\|^2 \\
		 \frac{1}{N} \Ex \|\widehat{\d}^{r}\|^2  
		\end{bmatrix} 
		+ \underbrace{\begin{bmatrix}
			 12 \alpha^3 \uptau^2 L   \sigma^2 + \frac{2\alpha^2 \uptau \sigma^2 }{N} \\
		6 \alpha^2 \uptau    \sigma^2
		\end{bmatrix}}_{\define b}.
	\end{align} 
	 Note that
	\begin{align} \label{rho_H}
		\rho(A) \leq \|A\|_1= \max \left\{
		1- \frac{\mu \uptau \alpha}{2}
		+ 4 \alpha^2 \uptau^2 L^2 
		, ~
		\tfrac{1+\delta}{2}+ 192 \alpha \uptau L    
		\right\} \leq 1-\tfrac{\mu \uptau \alpha}{4} .
	\end{align}
	where the  last inequality holds under the  step size condition:
	\begin{align} \label{ss_cvx_5_strongcvx}
		\alpha \leq \min \left\{\frac{\mu}{8 \uptau L^2},  \frac{1-\delta}{2 \uptau (192 L  + \mu/4)}\right\}.
\end{align}
	Since $\rho(A) <1$, we can iterate inequality \eqref{linear_dynamical_error2} to get
	\begin{align} 
		\begin{bmatrix} 
		\Ex  \|\bar{x}^{r}-x^\star\|^2 \\
	 \frac{1}{N}	\Ex \|\widehat{\d}^{r}\|^2  
	\end{bmatrix}
		& \leq  
		A^r
		\begin{bmatrix} 
		\Ex  \|\bar{x}^{0}-x^\star\|^2 \\
		 \frac{1}{N} \Ex \|\widehat{\d}^{0}\|^2  
	\end{bmatrix}
		+ \sum_{\ell=0}^{r-1} A^\ell b \nonumber \\ 
		& \leq  
		A^r
		\begin{bmatrix} 
		\Ex  \|\bar{x}^{0}-x^\star\|^2 \\
		 \frac{1}{N} \Ex \|\widehat{\d}^{0}\|^2  
	\end{bmatrix} 
		+(I- A)^{-1} b.
	\end{align}
	Taking the $1$-induced-norm and using   properties of the (induced) norms, it holds that
	\begin{align} \label{tran_SC_0}
	\Ex  \|\bar{x}^{r}-x^\star\|^2  +  \tfrac{1}{N}	\Ex \|\widehat{\d}^{r}\|^2  
		& \leq  
		\|A^r\|_1  a_0
		+ \left\|(I- A)^{-1} b \right\|_1 \leq  
		\|A\|^r_1 a_0
		+ \left\|(I- A)^{-1} b \right\|_1.
	\end{align}
	where $a_0  \define \Ex  \|\bar{x}^{0}-x^\star\|^2 + \frac{1}{N} \Ex \|\widehat{\d}^{0}\|^2 $.  We now bound the last term by  noting that  
	\begin{align*}
		(I-A)^{-1} b &= \begin{bmatrix}
			 \frac{\mu \uptau \alpha}{2}  \vspace{0.5mm} 		 
			&
		-	192 \uptau \alpha L      \vspace{0.5mm} \\
			-4 \alpha^2 \uptau^2 L^2 
			& 
			\tfrac{1-\delta}{2}
		\end{bmatrix}^{-1}  b
		=
		\frac{1}{\det(I-A)}
		 \begin{bmatrix}
		 \tfrac{1-\delta}{2} \vspace{0.5mm} 		 
			&
				192 \uptau \alpha L    \vspace{0.5mm} \\
			4 \alpha^2 \uptau^2 L^2 
			& 
				\frac{\mu \uptau \alpha}{2}
		\end{bmatrix} b \\
		& = 
		\frac{1}{\alpha \uptau \mu (1-\delta) (\frac{1}{4} -768 \alpha^3 \uptau^3 L^3  ) }
	 \begin{bmatrix}
		\tfrac{1-\delta}{2} \vspace{0.5mm} 		 
		&
		192 \uptau \alpha L    \vspace{0.5mm} \\
		4 \alpha^2 \uptau^2 L^2 
		& 
		\frac{\mu \uptau \alpha}{2}
	\end{bmatrix}
		\begin{bmatrix}
			12 \alpha^3 \uptau^2 L   \sigma^2 + \frac{2\alpha^2 \uptau \sigma^2 }{N} \\
			6 \alpha^2 \uptau    \sigma^2
		\end{bmatrix}
		\nonumber \\
		& \leq  \frac{8}{ \alpha \uptau \mu (1-\delta)}  
		\begin{bmatrix}
			  (1-\delta) \alpha^2 \uptau \sigma^2 /N + 6 (1-\delta) \alpha^3 \uptau^2 L   \sigma^2 + 1152 \alpha^3 \uptau^2  L   \sigma^2  \vspace{2mm}
			\\
		8 \alpha^4 \uptau^3  L^3  \sigma^2 (1/N + 6 \alpha \uptau L )+	3 \alpha^3 \uptau^2 \mu  \sigma^2 
		\end{bmatrix}.
	\end{align*}
The last step holds for $\frac{1}{4} -768 \alpha^3 \uptau^3 L^3   \geq \frac{1}{8} $ or $768 \alpha^3 \uptau^3 L^3 \leq \frac{1}{8}$, which holds under condition \eqref{ss_cvx_1}.
	Therefore,
	\begin{align*}
		& \left\|(I- A)^{-1} b \right\|_1 \nonumber \\
		& \leq     \frac{8 \alpha \sigma^2 }{\mu N} + \dfrac{ 48 (1-\delta) \alpha^2 \uptau L       \sigma^2 + 6144  \alpha^2 \uptau L  \sigma^2  }{  \mu (1-\delta)} + \dfrac{	16 \alpha^3 \uptau^2  L^3   \sigma^2 (1/N +6 \alpha \uptau L)+	3 \alpha^2 \uptau \mu  \sigma^2}{ \mu (1-\delta) }.
	\end{align*}
	Substituting the above into \eqref{tran_SC_0} and using \eqref{rho_H}, we obtain
	\begin{align} \label{last_strong_cvx_thm_proof}
	&		\Ex  \|\bar{x}^{r}-x^\star\|^2  +  \tfrac{1}{N}	\Ex \|\widehat{\d}^{r}\|^2  
		\nonumber \\
		& \leq  (1-\tfrac{\alpha \uptau \mu}{4})^r a_0
		+     \tfrac{8 \alpha \sigma^2 }{\mu N} + \tfrac{ 48 (1-\delta) \alpha^2 \uptau L       \sigma^2 + 6147  \alpha^2 \uptau L  \sigma^2  }{  \mu (1-\delta)} + \tfrac{	16 \alpha^3 \uptau^2  L^3   \sigma^2 (1/N +6 \alpha \uptau L)}{ \mu (1-\delta) }.
	\end{align}

\section{Proof of Corollary \ref{coro:rates}} \label{app:corro:rate}
The final rate can be obtained by tuning the stepsize in a way similar to  \cite{stich2019unified,karimireddy2019scaffold,koloskova2020unified}.

\paragraph{Nonconvex  case} If  all nodes use equal initialization, then equation \eqref{last_noncvx_thm_proof} (\eqref{eq:thm1:noncvx} from Theorem \ref{thm:noncvx}) reduces to
\begin{align} \label{cvx_stepsize_final}
\frac{1}{R}	 \sum_{r=0}^{R-1} \cE_r \leq \underbrace{\frac{c_0}{\alpha  R}+ c_1 \alpha   +c_2 \alpha^2 }_{\define \Psi_R} + \frac{a_{0} \alpha^2 }{R} ,
\end{align}
where $	\cE_r \define  \textstyle   \Ex  \| \grad f(\bar{x}^{r})  \|^2  +  \frac{1}{\uptau} \sum\limits_{t} \|  \tfrac{1}{N}  \sum\limits_{i} \nabla f_i(\phi^r_{i,t})\|^2 $ and
\begin{subequations} \label{parameters_noncvx}
	\begin{align}
c_0 & =   8   \frac{f(\bar{x}^{0})-f(\bar{x}^{\star})}{\uptau}, \quad
	c_1 = 	\frac{ 8   L \sigma^2 }{ N} \\
		c_2 &=  \uptau L^2 \sigma^2 \left(\frac{1536   }{(1-\sqrt{\lambda})}
	+ 16 \right), \quad a_{0} = \frac{2560  \uptau^2 L^2      \varsigma_0^2 }{ (1-\sqrt{\lambda}) (1-\lambda)  }  .
\end{align}
\end{subequations}
Note that the above holds under the condition:
\begin{align} \label{ss_noncvx_all}
	\alpha \leq  \frac{1}{\underline{\alpha}} \define 
		 \min \left\{ \frac{1-\sqrt{\lambda}}{16 \sqrt{2} \uptau L }, \frac{ \sqrt{(1-\sqrt{\lambda})(1-\lambda)}}{6 \uptau L }, \frac{\sqrt{1-\sqrt{\lambda}}}{32 \sqrt{2}  \uptau L } \right\} = O\left(\frac{1-\lambda}{\uptau L} \right),
\end{align}
 where $\frac{1}{\underline{\alpha}}$ satisfies all stepsize conditions used to derive \eqref{eq:thm1:noncvx}. Setting $\alpha =\min \left\{\left(\frac{c_0}{c_1 R}\right)^{\frac{1}{2}},\left(\frac{c_0}{c_2 R}\right)^{\frac{1}{3}}, \frac{1}{\underline{\alpha}} \right\} \leq  \frac{1}{\underline{\alpha}}$. Then we have three cases.

\begin{itemize}
	\item  When $\alpha=\frac{1}{\underline{\alpha}}$ and is smaller than both $\left(\frac{c_0}{c_1 R}\right)^{\frac{1}{2}}$ and $\left(\frac{c_0}{c_2 R}\right)^{\frac{1}{3}}$, then
	\begin{align*}
		\Psi_R =\frac{c_0}{\alpha  R}+ c_1 \alpha   +c_2 \alpha^2= \frac{\underline{\alpha} c_0}{ R}+ \frac{c_1}{\underline{\alpha}} + \frac{c_2}{\underline{\alpha}^2} \leq
		\frac{\underline{\alpha} c_0}{ R}
		+ c_1^{\frac{1}{2}} \left(\frac{c_0}{R}\right)^{\frac{1}{2}}+c_2^{\frac{1}{3}}\left(\frac{c_0}{R}\right)^{\frac{2}{3}}.
	\end{align*}
	
	\item When	$\alpha=\left(\frac{c_0}{c_1 R}\right)^{\frac{1}{2}} \leq \left(\frac{c_0}{ c_2  R}\right)^{\frac{1}{3}}$, then
	\begin{align*}
		\Psi_R \leq 2 c_1^{\frac{1}{2}} \left(\frac{ c_0 }{R}\right)^{\frac{1}{2}}+c_2 \left(\frac{c_0}{c_1 R}\right) \leq 2 c_1^{\frac{1}{2}} \left(\frac{ c_0 }{R}\right)^{\frac{1}{2}}+c_2^{\frac{1}{3}}\left(\frac{c_0}{R}\right)^{\frac{2}{3}}.
	\end{align*}
	
	\item  When $\alpha=\left(\frac{c_0}{c_2  R}\right)^{\frac{1}{3}} \leq \left(\frac{c_0}{c_1 R}\right)^{\frac{1}{2}}$, then
	\begin{align*}
		\Psi_R \leq 2 c_2^{\frac{1}{3}}\left(\frac{c_0}{ R}\right)^{\frac{2}{3}}+c_1 \left(\frac{c_0}{c_2 R}\right)^{\frac{1}{3}} \leq 2 c_2^{\frac{1}{3}}\left(\frac{c_0}{R}\right)^{\frac{2}{3}}+c_1^{\frac{1}{2}} \left(\frac{ c_0}{R}\right)^{\frac{1}{2}}.
	\end{align*}
\end{itemize}
Combining the above three cases together it holds that
\begin{align*}
\Psi_R=	\frac{c_0}{\alpha R}+c_1 \alpha+ c_2 \alpha^{2} \leq 2 c_1^{\frac{1}{2}} \left(\frac{ c_0}{R}\right)^{\frac{1}{2}}+2 c_2^{\frac{1}{3}}\left(\frac{c_0}{R}\right)^{\frac{2}{3}}+\frac{\underline{\alpha} c_0}{R}.
\end{align*}
Substituting the above into \eqref{cvx_stepsize_final}, we conclude that
\begin{align*} 
\frac{1}{R}	 \sum_{r=0}^{R-1} \cE_r
	& \leq 2 c_1^{\frac{1}{2}} \left(\frac{ c_0}{R}\right)^{\frac{1}{2}}+2 c_2^{\frac{1}{3}}\left(\frac{c_0}{R}\right)^{\frac{2}{3}}
	+\frac{(\underline{\alpha} c_0+ a_{0}/\underline{\alpha}^2 )}{R}.
\end{align*}
The rate \eqref{rate_noncvx} follows by plugging the parameters \eqref{parameters_noncvx} and using \eqref{ss_noncvx_all}.

\paragraph{Convex case} If we start from equal initialization then the convex bound \eqref{last_cvx_thm_proof} also satisfies \eqref{cvx_stepsize_final} under condition
	\begin{align}
	\alpha \leq \frac{1}{\underline{\alpha}} \define \min \left\{
	\frac{\sqrt{(1-\sqrt{\lambda}) (1-\lambda) }}{6  \uptau L},  \frac{1-\sqrt{\lambda}}{33 \uptau L},\frac{\sqrt{1-\sqrt{\lambda}}}{100  \uptau L }  \right\} = \cO\left(\frac{1-\lambda}{\uptau L}\right)
\end{align}
  with
\begin{align*}
	\cE_r &\define   \Ex [f(\bar{x}^r) - f(x^\star)] \\
 c_0 &=  \frac{  2  \|\bar{x}^{0}-x^\star \|^2}{ \uptau }, \quad
	c_1 = 	\frac{ 4   L \sigma^2 }{ N} \\
	 	c_2 &=    \frac{3072 \alpha^2 \uptau  L  \sigma^2}{(1-\sqrt{\lambda})}
	 	+   24 \alpha^2 \uptau L    \sigma^2 , \quad 	a_0  = \frac{2304  \uptau^2 L^2    \varsigma_0^2 }{ (1-\sqrt{\lambda}) (1-\lambda)  } .
\end{align*}
Therefore, the rate can be obtained by following the same arguments used for the noncovex case.

\paragraph{Strongly convex case} Using the stepsize condition used to derive Theorem \ref{thm:cvx}, namely,
	\begin{align} \label{all_strong_cvx_step}
	\alpha \leq \frac{1}{\underline{\alpha}} \define \min \left\{ \frac{\sqrt{(1-\sqrt{\lambda}) (1-\lambda) }}{6  \uptau L}, \frac{\sqrt{1-\sqrt{\lambda}}}{100  \uptau L } ,\frac{\mu}{8 \uptau L^2},  \frac{1-\sqrt{\lambda}}{2 \uptau (192 L  + \mu/4)} \right\} 
	= \cO \left(\frac{\mu (1-\lambda)}{L^2 \uptau}\right),
\end{align}
and starting from equal initialization, inequality \eqref{last_strong_cvx_thm_proof} (\eqref{eq:thm2:strongcvx}) can be upper bounded by
	\begin{align} 
		\Ex  \|\bar{x}^{R}-x^\star\|^2  +  \tfrac{1}{N}	\Ex \|\widehat{\d}^{R}\|^2 
			& \leq  (1-\tfrac{\alpha \uptau \mu}{4})^R a_0
		+     \tfrac{8 \alpha \sigma^2 }{\mu N} + \tfrac{ 48 (1-\delta) \alpha^2 \uptau L       \sigma^2 + 6147  \alpha^2 \uptau L  \sigma^2  }{  \mu (1-\delta)} + \tfrac{	16 \alpha^3 \uptau^2  L^3   \sigma^2 (1/N +6 \alpha \uptau L)}{ \mu (1-\delta) },
		\nonumber \\ 
	& \leq  (1-\tfrac{\alpha \uptau \mu}{4})^R a_0
	+   c_1 \alpha + c_2 \alpha^2 \nonumber  \\
	& \leq \exp(-\tfrac{\alpha \uptau \mu}{2} R) (c_0+ \alpha^2 b_0) +   c_1 \alpha + c_2 \alpha^2, \label{rsvx}
\end{align}
where
	\begin{align*}
c_0 &=   \|\bar{x}^{0}-x^\star\|^2, \quad b_0 = \frac{2  \uptau^2 \varsigma_0^2}{1-\lambda}    \\
c_1 &=  \frac{8 \sigma^2}{\mu N}, \quad 
c_2 = 	\frac{ 48   \uptau L       \sigma^2 + 6147  \uptau L  \sigma^2  +   \uptau  L^2   \sigma^2}{  \mu (1-\sqrt{\lambda})} .
\end{align*}
Now we select $\alpha=\min \left\{\frac{\ln \left(\max \left\{1, \mu \uptau (c_0 + b_0/\underline{\alpha}^2) R / c_1\right\}\right)}{\mu \uptau R}, \frac{1}{\underline{\alpha}}\right\} \leq \frac{1}{\underline{\alpha}}$ to get the following cases.
\begin{itemize}
	\item  If $\alpha=\frac{\ln \left(\max \left\{1, \mu (c_0 + b_0/\underline{\alpha}^2) R / c_1\right\}\right)}{\mu \uptau R} \leq \frac{1}{\underline{\alpha}} $ then  
	\begin{align*}
	\exp(-\tfrac{\alpha \uptau \mu}{2} R) (c_0+ \alpha^2 b_0) &\leq	\tilde{\mathcal{O}}\left( (c_0 + \frac{b_0}{\underline{\alpha}^2})  \exp \left[-\ln \left(\max \left\{1, \mu \uptau (c_0 + \frac{b_0}{\underline{\alpha}^2}) R / c_1 \right\}\right)\right]\right) \\
		&=\mathcal{O}\left(\frac{c_1}{\mu \uptau R}\right).
	\end{align*}
	
	\item  Otherwise $\alpha = \frac{1}{\underline{\alpha}} \leq \frac{\ln \left(\max \left\{1, \mu \uptau (c_0+b_0/\underline{\alpha}^2)  / c_1 \right\}\right)}{\mu \uptau R}$ and
	\begin{align*}
			\exp(-\tfrac{\alpha \uptau \mu}{2} R) (c_0+ \alpha^2 b_0) &=	\tilde{\mathcal{O}}\left(   \exp \left[-\frac{ \uptau \mu R}{2 \underline{\alpha}}\right] (c_0+\frac{b_0}{\underline{\alpha}^2})
			 \right).
	\end{align*}
\end{itemize}
Collecting these cases together into \eqref{rsvx}, we obtain
		\begin{align} 
			\Ex  \|\bar{x}^{R}-x^\star\|^2  +  \tfrac{1}{N}	\Ex \|\widehat{\d}^{R}\|^2  
			& \leq \exp(-\tfrac{\alpha \uptau \mu}{2} R) (c_0+ \alpha^2 b_0) +   c_1 \alpha + c_2 \alpha^2 \nonumber \\
			& \leq \tilde{\mathcal{O}}\left(\frac{c_1}{\mu \uptau R}\right) 
			+ \tilde{\mathcal{O}}\left(\frac{c_2}{\mu^2 \uptau^2 R^2}\right) 
			+	\tilde{\mathcal{O}}\left(   \exp \left[-\frac{ \uptau \mu R}{2 \underline{\alpha}}\right] (c_0+\frac{b_0}{\underline{\alpha}^2})
			\right). 
		\end{align}
Plugging in the parameters and using \eqref{all_strong_cvx_step} gives the final rate \eqref{rate_strongcvx}.

\section{LED analysis in the centralized server-workers setup} \label{app_server_proof}
In this section, we will analyze \alg{LED} within the server-workers setup. Specifically, we will examine the algorithm listed in \ref{alg:fed-ed}. It can be verified that this algorithm is equivalent to Algorithm \ref{alg:led} for the fully connected network case, $W=(1/N)\one \one\tran$, when $\gamma=1$. Here, $\gamma$ is an additional parameter that allows us to derive tighter bounds. To make this section self-contained, we will revisit steps similar to those in the decentralized case but specialized for the centralized case, leading to simpler steps.

\begin{algorithm}[t] 
	\caption{\sc  LED in the server-workers setup}
	\textbf{node $i$ input:} $x^0$,   $\alpha>0$, $\beta>0$, $\gamma>0$, and $\uptau$.
	
	\textbf{initialize} $y_i^0=0$.
	
	\textbf{repeat for} $r=0,1,2,\dots$
	\begin{enumerate}\itemsep=-5pt
		\item 	 \texttt{Local updates:} In parallel each node (worker) $i$  do ($\phi_{i,0}^r=x^{r}$):  
		\begin{subequations}
			\begin{align}  \label{local_fed_per_node}
				\phi_{i,t+1}^r &=    \phi_{i,t}^r-\alpha  \grad F_i(\phi_{i,t}^r;\xi_{i,t}^r)   -  \beta  y_i^{r} , \quad t=0,\dots,\uptau-1 .
			\end{align}
			
			\item  \texttt{Communication:} Server receives $\{\phi_{j,\uptau}^{r}\}$ from all workers, computes the average $\frac{1}{N} \sum_{j=1}^N  \phi_{j,\uptau}^{r}$, and send it back to all nodes.

			\item \texttt{Estimates update:} Each node $i$ do
			\begin{align} 
					x^{r+1} &= (1-\gamma) x^r + \gamma \frac{1}{N} \sum_{j=1}^N  \phi_{j,\uptau}^{r}. \label{local_fed_comm_round_per_node} \\
				\label{local_fed_dual_per_node}
				y_i^{r+1} &= y_i^{r} +  \phi_{i,\uptau}^{r} -\frac{1}{N} \sum_{j=1}^N  \phi_{j,\uptau}^{r}.
			\end{align}
			
		\end{subequations}
	\end{enumerate}
	\label{alg:fed-ed} 
\end{algorithm} 

\paragraph{Network description}	 We start by defining 
\begin{subequations}
	\begin{align}
		\A&= \tfrac{1}{N} \one \one\tran \otimes I_m \in \real^{mN \times mN} \\
		\bar{\x}^r &=\col\{x^r,\dots,x^r\} \in \real^{mN} \\
		\x &=\col\{x_1,\dots,x_N\} \in \real^{mN}, ~ x_i \in \real^m \\	
		\bm{\upphi}^{r}_{t}&=\col\{\phi_{1,t}^r,\dots,\phi_{N,t}^r\} \in \real^{mN} \\
		\f(\x)&=\sum_{i=1}^N f_i(x_i) \\
		\grad \f(\x)&=\col\{\grad f_1(x_1),\dots,\grad F_N(x_N)\} \in \real^{mN} \\
		\grad \F(\x; \bxi)&=\col\{\grad F_1(x_1;\xi_1),\dots,\grad F_N(x_N;\xi_N)\} \in \real^{mN} \\
		\y &=\col\{y_1,\dots,y_N\} \in \real^{mN}.
	\end{align}
\end{subequations}
Using the above notation, Algorithm \ref{alg:fed-ed} can be described in compact form as follows: Set $\bm{\upphi}_{0}^r=\bar{\x}^{r}$ and do: 
	\begin{subequations}  \label{fed_updates_network}
		\begin{align}  \label{local_fed_updates}
			\bm{\upphi}^{r}_{t+1} &=    \bm{\upphi}^{r}_{t}-\alpha  \grad \F(\bm{\upphi}^{r}_{t};\bxi_t^r)   -  \beta  \y^{r}, \quad t=0,\dots,\uptau-1 \\
			\bar{\x}^{r+1} &= (1-\gamma) \bar{\x}^r + \gamma  \A \bm{\upphi}^r_{\uptau}  \label{local_fed_comm_round}
			\\
			\y^{r+1} &= \y^{r}+  (\I-\A) \bm{\upphi}^r_{\uptau}.
		\end{align}
	\end{subequations}
For analysis purposes, we also introduce the notation:
\begin{subequations} \label{notation_proof_fed}
	\begin{align}
		\overline{\grad f } (\x^r)  &\define \frac{1}{N} \sum_{i=1}^N  \grad f_i (x_i^r) \\
		\bar{s}_t^r &\define \frac{1}{N} \sum_{i=1}^N \left( \nabla F_i(\phi^{r}_{i,t};\xi_{i,t}) - \nabla f_i(\phi^{r}_{i,t}) \right), \quad
		\s_t^r \define \nabla \F(\bm{\Phi}^{r}_t;\bxi_t) - \nabla \f(\bm{\Phi}^{r}_{t}) \label{noise_def_fed} \\
		\z^r &\define \y^r +\frac{\alpha}{\beta}  \grad \f(\bar{\x}^r), \quad \bar{\z}^r \define \one_N \otimes \bar{z}^r, \quad \bar{z}^r  \define \frac{1}{N}\sum_{i=1}^N z_i^r  \label{z_def_fed}.
	\end{align}
\end{subequations}

\subsection{Centroid and gradient deviation}
Iterating  \eqref{local_fed_updates} updates:
\begin{align*}
	\bm{\Phi}^{r}_{\uptau} &=    \bar{\x}^{r}-\alpha \sum_{t=0}^{\uptau-1} \nabla \F(\bm{\Phi}^{r}_{t};\bxi_{t}) - \beta  \uptau \y^r,
\end{align*}
where $\bm{\Phi}^r_{0}=\x^r$ and (for simplicity) we are removing the superscript in $\bxi_{t}^r$.	Substituting the preceding into \eqref{local_fed_comm_round} yields
\begin{subequations} \label{x_and_y_r_update0_fed}
	\begin{align}
		\bar{\x}^{r+1} &=  (1-\gamma) \bar{\x}^{r}  + \gamma  \A \left(\bar{\x}^{r}  - \beta \uptau \y^r - \alpha \sum_{t=0}^{\uptau-1} \nabla \F(\bm{\Phi}^{r}_{t};\bxi_t)  \right) \label{x_r_upd0} \\
		\y^{r+1} &= \y^{r}+  (\I-\A)   \left(\bar{\x}^{r}- \beta \uptau \y^r  - \alpha \sum_{t=0}^{\uptau-1} \nabla \F(\bm{\Phi}^{r}_t;\bxi_t)   \right).
	\end{align}
\end{subequations}
When $\y^{0}=\zero$, the iterates $\{\y^r\}$ will always be in the range of $\I-\A$, consequently, $(\one\tran \otimes I_m) \y^r =0$ for all $r$. Using this and the fact $\A \bar{\x}^{r}=\bar{\x}^{r}$, the updates \eqref{x_and_y_r_update0_fed} become
\begin{subequations} \label{x_and_y_r_update_fed}
	\begin{align}
		\bar{\x}^{r+1} &=   \bar{\x}^{r}  + \alpha \gamma  \A   \sum_{t=0}^{\uptau-1} \nabla \F(\bm{\Phi}^{r}_{t};\bxi_t)   \label{x_r_upd_fed} \\
		\y^{r+1} &=  (1-\beta \uptau) \y^{r} - \alpha  (\I-\A)  \sum_{t=0}^{\uptau-1} \nabla \F(\bm{\Phi}^{r}_t;\bxi_t) 
		 \label{y_r_upd_fed}
	\end{align}
\end{subequations}
Using the definitions in \eqref{notation_proof_fed} into \eqref{y_r_upd_fed}, we have
	\begin{align} 
		\z^{r+1} &= [(1- \beta \uptau)\I+\beta \uptau \A ]  \z^{r}  - \alpha  (\I-\A)  \sum_{t=0}^{\uptau-1} \big(\nabla \f(\bm{\Phi}^{r}_{t} ) -\grad \f(\bar{\x}^{r}) + \s_t^r \big)
		+  \frac{\alpha}{\beta}  \big(\grad \f(\bar{\x}^{r+1})-\grad \f(\bar{\x}^{r}) \big) \label{z_r_update_fed}.  
	\end{align}
Let $\bar{\z}^r = \A \z^r$, then we have for $\beta=1/\uptau$:
\begin{align*}
	\bar{\z}^{r+1} &=   \A \bar{\z}^{r}  - \alpha  \A (\I-\A)  \sum_{t=0}^{\uptau-1} \big(\nabla \f(\bm{\Phi}^{r}_{t} ) -\grad \f(\bar{\x}^{r}) + \s_t^r \big)
	+  \A \frac{\alpha}{\beta}  \big(\grad \f(\bar{\x}^{r+1})-\grad \f(\bar{\x}^{r}) \big) 
	\\
	&=   \bar{\z}^{r}   
	+  \A \frac{\alpha}{\beta}  \big(\grad \f(\bar{\x}^{r+1})-\grad \f(\bar{\x}^{r}) \big)
\end{align*}
where in the last step we used $\A \bar{\z}^{r}=\bar{\z}^r$ and $\A(\I-\A)=\zero$.
Therefore,
\begin{align} \label{z_track0_fed}
	\z^{r+1} -\bar{\z}^{r+1}  &=  - \alpha  (\I-\A)  \sum_{t=0}^{\uptau-1} \big(\nabla \f(\bm{\Phi}^{r}_{t} ) -\grad \f(\bar{\x}^{r}) + \s_t^r \big)
	+  \alpha \uptau (\I-\A)  \big(\grad \f(\bar{\x}^{r+1})-\grad \f(\bar{\x}^{r}) \big).
\end{align}
It follows from \eqref{x_r_upd_fed} and \eqref{z_track0_fed} and  that
	\begin{subequations} 
		\begin{align}
			x^{r+1}&=x^r -  \alpha \gamma \sum_{t=0}^{\uptau-1} \big( \overline{\nabla f}(\bm{\Phi}^r_t) + \bar{s}_t^r  \big)   \label{error_fed_avg_progress}  \\
			\z^{r+1} -\bar{\z}^{r+1}  &=  - \alpha  (\I-\A)  \sum_{t=0}^{\uptau-1} \big(\nabla \f(\bm{\Phi}^{r}_{t} ) -\grad \f(\bar{\x}^{r}) + \s_t^r \big) 
			+  \alpha \uptau (\I-\A)  \big(\grad \f(\bar{\x}^{r+1})-\grad \f(\bar{\x}^{r}) \big)  \label{z_track_fed}
		\end{align}
	\end{subequations}

\subsection{Auxiliary bounds}
Define
\begin{align} \label{drift_def_fed}
	\|\bm{\widehat{\Phi}}^r\|^2 \define \sum\limits_{t=0}^{\uptau-1}  \|\bm{\Phi}^r_{t} - \bar{\x}^r\|^2 =	    \sum_{t=0}^{\uptau-1} \sum_{i =1}^N    \|\phi^{r}_{i,t}-x^r\|^2.
\end{align}
where $\bm{\widehat{\Phi}}^r \define \col\{\bm{\Phi}^r_{t} - \bar{\x}^r\}_{t=0}^{\uptau-1}$.

\begin{lemma}[\sc \small Local drift bound] \label{lema_drift_cen_fed}
	\rm  Let Assumptions \ref{assump:noise}--\ref{assump:smoothness} hold, then for $\alpha  \leq \frac{1}{2 \sqrt{2} L \uptau} $ we have
	\begin{align} \label{drift_bound_fed}
		\Ex 	\|\bm{\widehat{\Phi}}^r\|^2 & \leq 16 \uptau  \Ex \|\z^r -\bar{\z}^r\|^2 + 16   \alpha^2 \uptau^3 N  \Ex \|\grad f(x^{r})\|^2 + 4 \alpha^2   \uptau^2 N \sigma^2.
	\end{align}
\end{lemma}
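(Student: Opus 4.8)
The plan is to follow the same strategy as in the proof of Lemma~\ref{lema_drift}, which itself adapts \cite[Lemma~8]{karimireddy2019scaffold}, while exploiting the crucial simplification available in the server-workers setup: since every worker initializes its local updates from the common server estimate, $\phi_{i,0}^r=x^r=\bar{x}^r$, the initial per-node consensus error $\|\phi_{i,0}^r-\bar{x}^r\|^2$ vanishes identically. This is precisely why the coefficient multiplying the deviation term is $16\uptau$ here rather than the $64\uptau$ appearing in the decentralized bound \eqref{drift_bound}. For $\uptau=1$ the claim is immediate, since then $\bm{\widehat{\Phi}}^r=\col\{\x^r-\bar{\x}^r\}=\zero$ in the centralized case, so I would focus on $\uptau\geq 2$.

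First I would bound a single local step. Starting from the update \eqref{local_fed_updates}, taking the conditional expectation and using the unbiasedness and bounded-variance conditions \eqref{noise_bound_eq} to peel off an additive $\alpha^2\sigma^2$, I would then apply \eqref{jensens_b} with the split $\theta=1-\tfrac{1}{\uptau}$ to separate $\|\phi_{i,t}^r-\bar{x}^r\|^2$ from the gradient-plus-dual contribution. Substituting the dual variable via the definition $z_i^r=y_i^r+\tfrac{\alpha}{\beta}\grad f_i(x^r)$ from \eqref{z_def_fed} converts $\alpha\grad f_i(\phi_{i,t}^r)+\beta y_i^r$ into $\alpha(\grad f_i(\phi_{i,t}^r)-\grad f_i(x^r))+\beta z_i^r$, after which $L$-smoothness \eqref{smooth_f_eq} controls the gradient difference by $L^2\|\phi_{i,t}^r-\bar{x}^r\|^2$. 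Collecting terms and using $\alpha\leq\frac{1}{2\sqrt{2}L\uptau}$ to guarantee $2\alpha^2\uptau L^2\leq\frac{1}{4(\uptau-1)}$ yields the one-step recursion
\begin{align*}
\Ex\|\phi_{i,t+1}^r-\bar{x}^r\|^2\leq\Big(1+\tfrac{5/4}{\uptau-1}\Big)\Ex\|\phi_{i,t}^r-\bar{x}^r\|^2+\tfrac{2}{\uptau}\Ex\|z_i^r\|^2+\alpha^2\sigma^2.
\end{align*}

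Next I would iterate this recursion from $t=0$. Because the initial term $\|\phi_{i,0}^r-\bar{x}^r\|^2=0$, the geometric-growth prefactor $(1+\tfrac{5/4}{\uptau-1})^t\leq\exp(5/4)\leq 4$ for $t\leq\uptau-1$ acts only on the accumulated $z_i^r$ and noise terms, giving $\Ex\|\phi_{i,t+1}^r-\bar{x}^r\|^2\leq 8\,\Ex\|z_i^r\|^2+4\alpha^2\uptau\sigma^2$. Summing over $t=0,\dots,\uptau-1$ and over $i$ produces $\Ex\|\bm{\widehat{\Phi}}^r\|^2\leq 8\uptau\sum_i\Ex\|z_i^r\|^2+4\alpha^2\uptau^2 N\sigma^2$. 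Finally I would split $\|z_i^r\|^2\leq 2\|z_i^r-\bar{z}^r\|^2+2\|\bar{z}^r\|^2$ via \eqref{jensens_a}; the first piece sums to $\|\z^r-\bar{\z}^r\|^2$, and for the second I would use that $(\one\tran\otimes I_m)\y^r=0$ together with \eqref{z_def_fed} and $\beta=1/\uptau$ to obtain $\bar{z}^r=\alpha\uptau\,\grad f(x^r)$, whence $N\|\bar{z}^r\|^2=\alpha^2\uptau^2 N\|\grad f(x^r)\|^2$. Substituting these yields exactly \eqref{drift_bound_fed}.

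The main obstacle is not conceptual but bookkeeping: the one-step recursion must be assembled so that the growth constant $(1+\tfrac{5/4}{\uptau-1})$ and the stepsize threshold $\alpha\leq\frac{1}{2\sqrt{2}L\uptau}$ are consistent, and the repeated applications of \eqref{jensens_a}--\eqref{jensens_b} must be tracked carefully to land on the stated constants. The only genuinely new ingredient relative to Lemma~\ref{lema_drift} is recognizing that the vanishing initial consensus error both simplifies the iteration and upgrades the final deviation term from the transformed quantity $\|\widehat{\d}^r\|^2$ to the more transparent $\|\z^r-\bar{\z}^r\|^2$.
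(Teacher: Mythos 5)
Your proposal is correct and follows essentially the same route as the paper's own proof: the identical one-step recursion with growth factor $\left(1+\tfrac{5/4}{\uptau-1}\right)$ under the same stepsize condition, iteration exploiting $\phi_{i,0}^r=x^r$ so that $(1+\tfrac{5/4}{\uptau-1})^t\leq\exp(5/4)\leq 4$ acts only on the accumulated terms, the Jensen split $\|z_i^r\|^2\leq 2\|z_i^r-\bar{z}^r\|^2+2\|\bar{z}^r\|^2$, and the identification $\bar{z}^r=\alpha\uptau\,\grad f(x^r)$ to produce the gradient term. Your observation that the vanishing initial consensus error is what removes the decentralized bound's dependence on the transformed quantity $\|\widehat{\d}^r\|^2$ (and its larger constant) is also consistent with how the paper's two proofs differ.
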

\begin{proof} 
	The proof follows similar steps to Lemma \ref{lema_drift} specialized to the centralized scenario.	When $\uptau=1$, then $\phi_{i, 0}=x^r$ for all $i$ and 
	$
	\Ex 	\|\bm{\widehat{\Phi}}^r\|^2=  0. $
	Now suppose that $\uptau \geq 2$. Then, using \eqref{local_fed_updates}, it holds that
	\[
	\begin{aligned}
		\Ex\|\phi^{r}_{i,t+1} -x^{r}\|^2 &=\Ex \left\|
		\phi^{r}_{i,t}-x^{r}-\alpha  \grad F_i(\phi^{r}_{i,t};\xi_{i,t})   -  \beta  y_i^{r} \right\|^2 \\
		&\leq  \Ex \left \| \phi^{r}_{i,t}-x^{r}-\alpha  \grad f_i(\phi^{r}_{i,t})   -  \beta  y_i^{r} \right\|^2+\alpha^2 \sigma^2 \\
		&\leq  \left(1+\tfrac{1}{\uptau-1}\right) \Ex\left\| \phi^{r}_{i,t}-x^{r} \right\|^2+\uptau \Ex \left\|\alpha  \grad f_i(\phi^{r}_{i,t})   +  \beta  y_i^{r}  \right\|^2+\alpha^2 \sigma^2  \\
		&= \left(1+\tfrac{1}{\uptau-1}\right) \Ex\| \phi^{r}_{i,t}-x^{r} \|^2+\uptau \Ex \left\|\alpha  \grad f_i(\phi^{r}_{i,t})   - \alpha \grad f_i(x^{r})    + \beta  z_i^{r}  \right\|^2+\alpha^2 \sigma^2 \\
		&\leq \left(1+\tfrac{1}{\uptau-1}\right) \Ex\| \phi^{r}_{i,t}-x^{r} \|^2+2  \alpha^2 \uptau \Ex \left\|  \grad f_i(\phi^{r}_{i,t})   -  \grad f_i(x^{r}) \right\|^2   + 2 \uptau \beta^2 \Ex \|  z_i^{r}  \|^2 + \alpha^2 \sigma^2 \\
		&\leq \left(1+\tfrac{1}{\uptau-1}+2\alpha^2 \uptau L^2  \right) \Ex\|\phi^{r}_{i,t}-x^{r} \|^2+\tfrac{2}{ \uptau} \Ex \left\|z_i^r\right\|^2+ \alpha^2 \sigma^2 \\
		& \leq \left(1+\tfrac{5/4}{\uptau-1}\right) \Ex\| \phi^{r}_{i,t}-x^{r} \|^2+\tfrac{2}{ \uptau} \Ex \|z_i^r\|^2+ \alpha^2 \sigma^2.
	\end{aligned}
	\]
	The  last inequality holds for $2\alpha^2 \uptau L^2  \leq \frac{1}{4(\uptau-1)}$, which is satisfied if $\alpha \leq \frac{1}{2\sqrt{2} L \uptau}$.		Iterating the inequality above for $t=0,\dots,\uptau-1$:
	\[
	\begin{aligned}
		\Ex\|\phi^{r}_{i,t+1} -x^{r}\|^2 & \leq \textstyle \sum\limits_{\ell=0}^{t}\left( \tfrac{2}{\uptau} \Ex \left\| z_i^r \right\|^2+ \alpha^2 \sigma^2 \right)\left(1+\frac{5/4}{(\uptau-1)}\right)^\ell \\
		& \leq \textstyle \sum\limits_{\ell=0}^{t}\left( \tfrac{2}{\uptau} \Ex \left\| z_i^r \right\|^2+ \alpha^2 \sigma^2 \right) \exp\left(\tfrac{(5/4) \ell}{\uptau-1}\right) \\
		& \leq  \left(\tfrac{2}{\uptau} \Ex\left\| z_i^r \right\|^2+\alpha^2 \sigma^2 \right) 4 \uptau \\
		& = 
		8 \Ex \left\| z_i^r \right\|^2  +4  \alpha^2 \uptau \sigma^2,
	\end{aligned}
	\]
	where  in the second and third inequalities we used  $(1+\tfrac{a}{\uptau-1})^t \leq \exp(\frac{ a t}{\uptau-1})\leq \exp(a)$ for $t \leq \uptau -1$.	Summing over $i$ and $t$:
	\[
	\begin{aligned}
		\Ex	\|\bm{\widehat{\Phi}}^r\|^2 & \leq 8 \uptau \textstyle \sum\limits_{i=1}^N \Ex \left\| z_i^r \right\|^2  + 4 \alpha^2 \uptau^2 N \sigma^2 \\
		& \leq 16 \uptau \textstyle \sum\limits_{i=1}^N ( \Ex \|z_i^r -\bar{z}^r\|^2 +  \Ex \|\bar{z}^r\|^2 ) + 4 \alpha^2 \uptau^2 N \sigma^2 \\
		& \leq 16 \uptau  \Ex \|\z^r -\bar{\z}^r\|^2  +  16  \uptau N \Ex \|\bar{z}^r\|^2+ 4 \alpha^2 \uptau^2 N \sigma^2.
	\end{aligned}
	\]
	The result follows by using $\bar{z}^r= \alpha \uptau \frac{1}{N} \sum_{i=1}^N \grad f_i(x^r)$.
\end{proof}
\begin{lemma}[\sc \small Gradient deviation bound] \label{lemma:cons_inequ_led}
	\rm For $\alpha \leq \frac{1}{2 \sqrt{2} L \uptau}$, it holds that
	\begin{align}
		\Ex	\|	\z^{r+1} -\bar{\z}^{r+1} \|  
		& \leq
		48 \alpha^2 \uptau^2 L^2    \Ex \|\z^r -\bar{\z}^r\|^2 + 48   \alpha^4 \uptau^4 L^2 N  \Ex \|\grad f(x^{r})\|^2 
		+ 6 \alpha^4 \gamma^2 \uptau^2 L^2    N \textstyle \Ex \|  \sum\limits_{t}  \overline{\nabla f}(\bm{\Phi}^r_t) \|^2 \nonumber \\
		& \quad  + 12 \alpha^4   \uptau^3 L^2 N \sigma^2  + 6 \alpha^4 \gamma^2 \uptau^3 L^2   \sigma^2
		+   3 \alpha^2 \uptau   N \sigma^2.
		\label{cons_ineq_lemma_fed}
	\end{align}
\end{lemma}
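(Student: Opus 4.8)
The plan is to derive \eqref{cons_ineq_lemma_fed} directly from the exact expression for the gradient deviation recorded in \eqref{z_track_fed}, exploiting that the centralized averaging operator $\A=\tfrac1N\one\one\tran\otimes I_m$ is an orthogonal projection, so that $\I-\A$ satisfies $\|\I-\A\|\le 1$ and $\A(\I-\A)=\zero$. This projection identity is exactly what produced the clean decoupling $\bar{\z}^{r+1}=\bar{\z}^r+\tfrac{\alpha}{\beta}\A(\grad\f(\bar{\x}^{r+1})-\grad\f(\bar{\x}^r))$ in the lines preceding \eqref{z_track_fed}, and it lets me avoid entirely the eigendecomposition/transformation machinery of Lemma \ref{lemma:transf_avg_hat} that the decentralized analysis required. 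I read the left-hand side as the squared deviation $\Ex\|\z^{r+1}-\bar{\z}^{r+1}\|^2$, since every term on the right of \eqref{cons_ineq_lemma_fed} is quadratic.

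First I would regard the right-hand side of \eqref{z_track_fed} as a sum of three vectors — the gradient-drift term, the stochastic term, and the successive-gradient term — and apply Jensen's inequality \eqref{jensens_a} with $K=3$ followed by $\|\I-\A\|\le1$ to obtain
\begin{align*}
\|\z^{r+1}-\bar{\z}^{r+1}\|^2
&\le 3\alpha^2\big\|\textstyle\sum_t(\nabla\f(\bm{\Phi}^r_t)-\grad\f(\bar{\x}^r))\big\|^2
+ 3\alpha^2\big\|\textstyle\sum_t\s_t^r\big\|^2 \\
&\quad + 3\alpha^2\uptau^2\big\|\grad\f(\bar{\x}^{r+1})-\grad\f(\bar{\x}^r)\big\|^2 .
\end{align*}
Then I bound each group. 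For the first, node-wise $L$-smoothness \eqref{smooth_f_eq} gives $\|\nabla\f(\bm{\Phi}^r_t)-\grad\f(\bar{\x}^r)\|^2\le L^2\|\bm{\Phi}^r_t-\bar{\x}^r\|^2$, and a further use of \eqref{jensens_a} over the $\uptau$ summands turns the first group into $3\alpha^2\uptau L^2\,\Ex\|\bm{\widehat{\Phi}}^r\|^2$, with $\|\bm{\widehat{\Phi}}^r\|^2$ the drift quantity \eqref{drift_def_fed}. For the stochastic term, the essential probabilistic input is that the local-step noises form a martingale-difference sequence: $\Ex_r[\s_t^r]=\zero$ and, by Assumption \ref{assump:noise}, the data $\{\xi^r_{i,t}\}$ are independent across $t$ and $i$, so the cross terms vanish and $\Ex\|\sum_t\s_t^r\|^2=\sum_t\Ex\|\s_t^r\|^2\le\uptau N\sigma^2$ — this is crucial, as naive Jensen would only give the looser $\uptau^2N\sigma^2$ and break the target constant; this yields the $3\alpha^2\uptau N\sigma^2$ contribution. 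For the third group, $L$-smoothness gives $\|\grad\f(\bar{\x}^{r+1})-\grad\f(\bar{\x}^r)\|^2\le L^2 N\|x^{r+1}-x^r\|^2$, into which I substitute the centroid recursion \eqref{error_fed_avg_progress}, $x^{r+1}-x^r=-\alpha\gamma\sum_t(\overline{\nabla f}(\bm{\Phi}^r_t)+\bar{s}_t^r)$; splitting signal from noise via \eqref{jensens_a} and using $\Ex\|\sum_t\bar{s}_t^r\|^2\le\uptau\sigma^2/N$ (again through independence) produces exactly the $6\alpha^4\gamma^2\uptau^2L^2N\,\Ex\|\sum_t\overline{\nabla f}(\bm{\Phi}^r_t)\|^2$ and $6\alpha^4\gamma^2\uptau^3L^2\sigma^2$ terms.

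Finally I would substitute the local-drift estimate \eqref{drift_bound_fed} of Lemma \ref{lema_drift_cen_fed} for $\Ex\|\bm{\widehat{\Phi}}^r\|^2$ — valid precisely under the stated hypothesis $\alpha\le\tfrac{1}{2\sqrt{2}L\uptau}$, which is the only place this condition enters. The prefactor $3\alpha^2\uptau L^2$ multiplies the three pieces $16\uptau\,\Ex\|\z^r-\bar{\z}^r\|^2$, $16\alpha^2\uptau^3N\,\Ex\|\grad f(x^r)\|^2$ and $4\alpha^2\uptau^2N\sigma^2$ of \eqref{drift_bound_fed} into the $48\alpha^2\uptau^2L^2$, $48\alpha^4\uptau^4L^2N$ and $12\alpha^4\uptau^3L^2N\sigma^2$ terms of \eqref{cons_ineq_lemma_fed}, and collecting these with the stochastic and successive-gradient contributions reproduces the claimed inequality term-for-term. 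I expect the one genuine subtlety — rather than bookkeeping — to be the coupling caused by $\grad\f(\bar{\x}^{r+1})$ depending on the current round's noise through \eqref{error_fed_avg_progress}. I would emphasize that, unlike the decentralized Lemma \ref{lemma:cons_inequ} where this coupling forces an explicit conditional-expectation cross-term argument, here the exact relation $\A(\I-\A)=\zero$ has already eliminated the relevant mean component, so a direct norm bound suffices and the martingale structure is invoked only to hold the noise sums at order $\uptau$ instead of $\uptau^2$.
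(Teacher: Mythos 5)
Your proof is correct and takes essentially the same route as the paper's: the identical three-term Jensen split of \eqref{z_track_fed} with $\|\I-\A\|\le 1$, the same independence (martingale-difference) argument that keeps the noise sums at order $\uptau$ rather than $\uptau^2$, and the same substitution of the drift bound \eqref{drift_bound_fed} from Lemma \ref{lema_drift_cen_fed}, reproducing the claimed constants term for term. Your reading of the left-hand side as the squared deviation $\Ex\|\z^{r+1}-\bar{\z}^{r+1}\|^2$ is also the intended one; the missing square in the stated inequality is a typo carried over from the paper.
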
 
\begin{proof}
	From now on we use the notation $ \sum\limits_{t} \equiv \sum\limits_{t=0}^{\uptau -1} $ and $ \sum\limits_{i} \equiv \sum\limits_{i=1}^{N} $. 	 From \eqref{z_track_fed}
	\begin{align*}
		\Ex_r 	\|	\z^{r+1} -\bar{\z}^{r+1} \|  &=  \Ex_r  \left\|- \alpha  (\I-\A)  \sum_{t=0}^{\uptau-1} \big(\nabla \f(\bm{\Phi}^{r}_{t} ) -\grad \f(\bar{\x}^{r}) + \s_t^r \big) 
		+  \alpha \uptau (\I-\A)  \big(\grad \f(\bar{\x}^{r+1})-\grad \f(\bar{\x}^{r}) \big) \right\|^2 \\
		& \leq 3 \alpha^2 \Ex_r  \left\|  \sum_{t=0}^{\uptau-1} \big(\nabla \f(\bm{\Phi}^{r}_{t} ) -\grad \f(\bar{\x}^{r}) \big) 
		\right\|^2 +  3 \alpha^2 \uptau^2  \Ex_r  \left\| \grad \f(\bar{\x}^{r+1})-\grad \f(\bar{\x}^{r})  \right\|^2  +  3 \alpha^2 \Ex_r \left\|  \sum_{t=0}^{\uptau-1}  \s_t^r \right\|^2 \\
		& \leq 3 \alpha^2 \Ex_r  \left\|  \sum_{t=0}^{\uptau-1} \big(\nabla \f(\bm{\Phi}^{r}_{t} ) -\grad \f(\bar{\x}^{r}) \big) 
		\right\|^2 +  3 \alpha^2 \uptau^2  \Ex_r  \left\| \grad \f(\bar{\x}^{r+1})-\grad \f(\bar{\x}^{r})  \right\|^2  +  3 \alpha^2 \uptau   N \sigma^2.
	\end{align*}
	Observe that
	\begin{align*}
		&   \textstyle \Ex_r \|\textstyle \sum\limits_{t}  \nabla \f(\bm{\Phi}^r_t) - \grad \f(\bar{\x}^r)\|^2 +  \uptau^2  \Ex_r \|\grad \f(\bar{\x}^{r+1})-\grad \f(\bar{\x}^{r})\|^2 \\
		&\leq   \uptau L^2   \textstyle\sum\limits_{t} \textstyle \Ex_r \|  \bm{\Phi}^r_t -  \bar{\x}^r  \|^2 + \uptau^2 L^2  N \textstyle \Ex_r \|x^{r+1}-x^{r}\|^2 \\
		\overset{\eqref{error_fed_avg_progress}}&{=}  \uptau L^2   \textstyle\sum\limits_{t} \textstyle \Ex_r \|  \bm{\Phi}^r_t -  \bar{\x}^r  \|^2 
		+ \uptau^2 L^2 N\textstyle \Ex_r  \| \alpha \gamma \sum\limits_{t} \big( \overline{\nabla f}(\bm{\Phi}^r_t) + \bar{s}_t^r  \big)\|^2 \\		
		& \leq 		  \uptau L^2  \textstyle \Ex_r \|\bm{\widehat{\Phi}}^r\|^2 + 2 \alpha^2 \gamma^2 \uptau^2  L^2   N \textstyle \Ex_r \|  \sum\limits_{t}  \overline{\nabla f}(\bm{\Phi}^r_t) \|^2 +2 \gamma^2 \alpha^2 \uptau^2 L^2    N \|  \sum\limits_{t} \bar{s}_t^r  \|^2 \\
		& \leq 		  \uptau L^2   \textstyle \Ex_r \|\bm{\widehat{\Phi}}^r\|^2 + 2 \alpha^2 \gamma^2 \uptau^2 L^2    N \textstyle \Ex_r \|  \sum\limits_{t}  \overline{\nabla f}(\bm{\Phi}^r_t) \|^2 + 2 \alpha^2 \gamma^2 \uptau^3 L^2   \sigma^2.
	\end{align*}
	Combining the last two bounds
	\begin{align*}
		\Ex	\|	\z^{r+1} -\bar{\z}^{r+1} \|  
		& \leq 3 \alpha^2  \Ex \left\|  \sum_{t=0}^{\uptau-1} \big(\nabla \f(\bm{\Phi}^{r}_{t} ) -\grad \f(\bar{\x}^{r}) \big) 
		\right\|^2 +  3 \alpha^2 \uptau^2  \Ex \left\| \grad \f(\bar{\x}^{r+1})-\grad \f(\bar{\x}^{r})  \right\|^2  +  3 \alpha^2 \uptau   N \sigma^2 \\
		& \leq 3 \alpha^2 \uptau L^2   \textstyle \Ex \|\bm{\widehat{\Phi}}^r\|^2 + 6 \alpha^4 \gamma^2 \uptau^2 L^2    N \textstyle \Ex \|  \sum\limits_{t}  \overline{\nabla f}(\bm{\Phi}^r_t) \|^2 + 6 \alpha^4 \gamma^2 \uptau^3 L^2   \sigma^2
		+   3 \alpha^2 \uptau   N \sigma^2.
	\end{align*}
	Substituting \eqref{drift_bound_fed} into the above inequality yields 
	\begin{align*}
		\Ex	\|	\z^{r+1} -\bar{\z}^{r+1} \|  
		& \leq 3 \alpha^2 \uptau L^2   (16 \uptau  \Ex \|\z^r -\bar{\z}^r\|^2 + 16   \alpha^2 \uptau^3 N  \Ex \|\grad f(x^{r})\|^2 + 4 \alpha^2   \uptau^2 N \sigma^2) \\
		& \quad  + 6 \alpha^4 \gamma^2 \uptau^2 L^2    N \textstyle \Ex \|  \sum\limits_{t}  \overline{\nabla f}(\bm{\Phi}^r_t) \|^2 + 6 \alpha^4 \gamma^2 \uptau^3 L^2   \sigma^2
		+   3 \alpha^2 \uptau   N \sigma^2 \\
		& = 48 \alpha^2 \uptau^2 L^2    \Ex \|\z^r -\bar{\z}^r\|^2 + 48   \alpha^4 \uptau^4 L^2 N  \Ex \|\grad f(x^{r})\|^2 
		+ 6 \alpha^4 \gamma^2 \uptau^2 L^2    N \textstyle \Ex \|  \sum\limits_{t}  \overline{\nabla f}(\bm{\Phi}^r_t) \|^2 \\
		& \quad  + 12 \alpha^4   \uptau^3 L^2 N \sigma^2  + 6 \alpha^4 \gamma^2 \uptau^3 L^2   \sigma^2
		+   3 \alpha^2 \uptau   N \sigma^2.
	\end{align*}

\end{proof}

\subsection{Nonconvex case}
Recall from \eqref{error_fed_avg_progress} that $x^{r+1}=x^{r} -  \frac{\alpha \gamma}{N} \sum_{t=0}^{\uptau-1} \sum_{i=1}^N \big( \nabla f_i(\phi^r_{i,t}) + s^r_{i,t} \big)$.	Substituting $y=x^{r+1}$ and $z=x^{r}$ into  inequality \eqref{bound:L_smooth_function}, and taking conditional expectation, we get
\begin{align}
	\textstyle	\Ex_r   f(x^{r+1}) 
	&\leq f(x^{r}) - \alpha \gamma \textstyle \Ex_r \big\langle \grad f(x^{r}),  \textstyle \tfrac{1}{N} \sum\limits_{t} \sum\limits_{i} \big( \nabla f_i(\phi^r_{i,t}) + s^r_{i,t} \big) \big\rangle  +\tfrac{\alpha^2 \gamma^2 L}{2} \Ex_r \| \textstyle \tfrac{1}{N} \sum\limits_{t} \sum\limits_{i} \big( \nabla f_i(\phi^r_{i,t}) + s^r_{i,t}) \|^2  \nonumber \\
	&\leq f(x^{r}) - \alpha \gamma \textstyle \Ex_r  \textstyle   \big\langle \grad f(x^{r}), \frac{1}{N}  \sum\limits_{t} \sum\limits_{i} \nabla f_i(\phi^r_{i,t})  \big\rangle 
	+ \alpha^2 \gamma^2 \uptau L  \textstyle \sum\limits_{t}  \| \textstyle \tfrac{1}{N} \sum\limits_{i} \nabla f_i(\phi^r_{i,t})  \|^2 +  	\tfrac{\alpha^2 \gamma^2  \uptau L \sigma^2}{N},
\end{align}
where $\Ex_{r}$ denote the expectation conditioned on the all iterates up to $r$. Note that
\begin{align}
	-   &  \textstyle   \big\langle \grad f(x^{r}), \frac{1}{N}  \sum\limits_{t} \sum\limits_{i} \nabla f_i(\phi^r_{i,t})  \big\rangle \nonumber \\
	&=	-     \textstyle \sum\limits_{t}    \big\langle \grad f(x^{r}), \frac{1}{N}  \sum\limits_{i} \nabla f_i(\phi^r_{i,t})  \big\rangle \nonumber \\
	&=  - \tfrac{\uptau}{2}  \|\grad f(x^{r})\|^2 - \textstyle \tfrac{1}{2}  \sum\limits_{t}  \| \textstyle \tfrac{1}{N}  \sum\limits_{i} \nabla f_i(\phi^r_{i,t}) \|^2 + \tfrac{1}{2}  \sum\limits_{t}  \|\textstyle \tfrac{1}{N} \sum\limits_{i} \nabla f_i(\phi^r_{i,t})-\grad f(x^{r}) \|^2 \nonumber \\
	&\leq   - \tfrac{\uptau}{2}  \|\grad f(x^{r})\|^2 - \textstyle \tfrac{1}{2}  \sum\limits_{t}  \| \textstyle \tfrac{1}{N}  \sum\limits_{i} \nabla f_i(\phi^r_{i,t}) \|^2 + \tfrac{1}{2 N}  \sum\limits_{t} \sum\limits_{i} \|   \nabla f_i(\phi^r_{i,t})-\grad f_i(x^{r}) \|^2  \nonumber \\
	&\leq   - \tfrac{\uptau}{2}  \|\grad f(x^{r})\|^2 - \textstyle \tfrac{1}{2}  \sum\limits_{t}  \| \textstyle \tfrac{1}{N}  \sum\limits_{i} \nabla f_i(\phi^r_{i,t}) \|^2 + \tfrac{L^2}{2 N}   \| \bm{\widehat{\Phi}}^r \|^2,
\end{align} 
where the second bound holds from Jensen's inequality. 	Combining the last two equations and taking expectation yields
\begin{align}
	\Ex   f(x^{r+1})  &\leq \Ex f(x^{r}) - \tfrac{\alpha \gamma \uptau}{2} \Ex \| \grad f(x^{r})  \|^2 - \tfrac{\alpha \gamma}{2} (1-2\alpha \gamma \uptau L )  \textstyle \sum\limits_{t} \Ex \| \tfrac{1}{N}  \sum\limits_{i} \nabla f_i(\phi^r_{i,t})\|^2 \nonumber \\
	& \quad +\tfrac{\alpha \gamma L^2}{2 N}   \Ex \| \bm{\widehat{\Phi}}^r \|^2 +	\tfrac{  \alpha^2 \gamma^2 \uptau L \sigma^2 }{ N}. \label{2bndsbd_fed}
\end{align} 
Substituting the bound \eqref{drift_bound_fed} into inequality \eqref{2bndsbd_fed} and taking expectation yields
\begin{align}
	\Ex   f(x^{r+1})  &\leq \Ex f(x^{r}) - \tfrac{\alpha \gamma \uptau}{2} \Ex \| \grad f(x^{r})  \|^2 - \tfrac{\alpha \gamma}{2} (1-2\alpha \gamma \uptau L )  \textstyle \sum\limits_{t} \Ex \| \tfrac{1}{N}  \sum\limits_{i} \nabla f_i(\phi^r_{i,t})\|^2 \nonumber \\
	& \quad +\tfrac{\alpha \gamma L^2}{2 N}   (16 \uptau  \Ex \|\z^r -\bar{\z}^r\|^2 + 16   \alpha^2 \uptau^3 N  \Ex \|\grad f(x^{r})\|^2 + 4 \alpha^2   \uptau^2 N \sigma^2) +	\tfrac{  \alpha^2 \gamma^2 \uptau L \sigma^2 }{ N} \nonumber \\
	&=  \Ex f(x^{r}) - \tfrac{\alpha \gamma \uptau}{2} (1-16 \alpha^2  \uptau^2 L^2) \Ex \| \grad f(x^{r})  \|^2 - \tfrac{\alpha \gamma}{2} (1-2\alpha \gamma \uptau L )  \textstyle \sum\limits_{t} \Ex \| \tfrac{1}{N}  \sum\limits_{i} \nabla f_i(\phi^r_{i,t})\|^2 \nonumber \\
	& \quad +\tfrac{8\alpha  \gamma \uptau L^2}{ N}    \Ex \|\z^r -\bar{\z}^r\|^2  + 2 \alpha^3 \gamma   \uptau^2 L^2  \sigma^2 +	\tfrac{  \alpha^2 \gamma^2 \uptau L \sigma^2 }{ N}.
\end{align} 
When  $\alpha \gamma \leq \frac{1}{4 \sqrt{2}  \uptau L}$, we can upper bound the previous inequality by
\begin{equation} \label{descent_inquality_fed}	
	\begin{aligned}
		\Ex   f(x^{r+1}) &\leq  \Ex  f(x^{r}) - \tfrac{\alpha \gamma \uptau}{4}  \Ex  \| \grad f(x^{r})  \|^2  - \tfrac{ \alpha \gamma}{4}  \textstyle  \sum\limits_{t} \|   \tfrac{1}{N} \sum\limits_{i} \nabla f_i(\phi^r_{i,t})\|^2 \nonumber \\ 
		& \quad +\tfrac{8\alpha  \gamma \uptau L^2}{ N}    \Ex \|\z^r -\bar{\z}^r\|^2  + 2 \alpha^3 \gamma   \uptau^2 L^2  \sigma^2 +	\tfrac{  \alpha^2 \gamma^2 \uptau L \sigma^2 }{ N}.
	\end{aligned}	
\end{equation}
It follows that
\begin{align} \label{grad_ineq_noncvx_proof_fed}
	\cE_r 		  &\leq \tfrac{4}{\alpha \gamma \uptau} \left( \Ex  \tilde{f}(x^{r}) -  \Ex   \tilde{f}(x^{r+1}) \right)
	+\tfrac{32 L^2}{ N}    \Ex \|\z^r -\bar{\z}^r\|^2 + 8 \alpha^2 \uptau L^2    \sigma^2 +	\tfrac{ 4 \alpha \gamma  L \sigma^2 }{ N},
\end{align}	
where  $\cE_r \define  \textstyle   \Ex  \| \grad f(x^{r})  \|^2  +  \frac{1}{\uptau} \sum\limits_{t} \|  \tfrac{1}{N}  \sum\limits_{i} \nabla f_i(\phi^r_{i,t})\|^2$ and $\tilde{f}(x^{r})\define f(x^{r})-f^\star$. Averaging over $r=0,1,\ldots,R-1$  and using $-\tilde{f}(x^{r})\leq 0$, we get
\begin{align} \label{sum_grad_ineq_noncvx_proof_fed}
	\frac{1}{R}	 \sum_{r=0}^{R-1} \cE_r  		 
	&\leq \frac{4   \tilde{f}(\bar{x}^{0})}{\alpha \gamma \uptau R}	  
	+ \frac{32  L^2    }{ N R}	 \sum_{r=0}^{R-1}   \Ex \|\z^r -\bar{\z}^r\|^2  + 8 \alpha^2 \uptau L^2    \sigma^2 +	\tfrac{ 4 \alpha \gamma  L \sigma^2 }{ N}.
\end{align}		
We now bound the term $ \sum_{r=0}^{R-1}   \Ex \|\z^r -\bar{\z}^r\|^2 $. 	Using $	48 \alpha^2 \uptau^2 L^2 \leq 1/2$
in \eqref{cons_ineq_lemma_fed}, it holds that
\begin{align}
	\Ex	\|	\z^{r+1} -\bar{\z}^{r+1} \|^2  
	& \leq
	\tfrac{1}{2}    \Ex \|\z^r -\bar{\z}^r\|^2 +   \alpha^2 \uptau^2  N  \Ex \|\grad f(x^{r})\|^2 
	+  \alpha^2 \gamma^2     N \textstyle \Ex \|  \sum\limits_{t}  \overline{\nabla f}(\bm{\Phi}^r_t) \|^2 \nonumber \\
	& \quad +  6 \alpha^2 \gamma^2 \uptau    \sigma^2
	+ 4 \alpha^2 \uptau   N \sigma^2.
\end{align}
Iterating yields 
\begin{align} \label{cons_ineq_noncvx_proof_fed}
	\Ex	\|	\z^{r} -\bar{\z}^{r} \|^2  
	& \leq
	(\tfrac{1}{2})^r    \Ex \|\z^0 -\bar{\z}^0\|^2 +   \alpha^2 \uptau^2  N   \sum_{\ell=0}^{r-1} \left( \tfrac{1}{2} \right)^{r-1-\ell}     \cE_\ell 
	+  12 \alpha^2 \gamma^2 \uptau    \sigma^2
	+ 8 \alpha^2 \uptau   N \sigma^2.
\end{align}
Averaging over $r=1,\dots,R$ 
\begin{align} 
	\frac{1}{R} \sum_{r=1}^R		\Ex	\|	\z^{r} -\bar{\z}^{r} \|^2  	  &\leq  
	\frac{2   \|\z^0 -\bar{\z}^0\|^2}{R} 
	+    \tfrac{ \alpha^2 \uptau^2   N }{  R} \sum_{r=1}^R  \sum_{\ell=0}^{r-1} \left( \tfrac{1}{2} \right)^{r-1-\ell}   \cE_\ell    	+  12 \alpha^2 \gamma^2 \uptau    \sigma^2
	+ 8 \alpha^2 \uptau   N \sigma^2. \nonumber \\
	& \leq  
	\frac{2   \|\z^0 -\bar{\z}^0\|^2}{R} 
	+  \tfrac{2 \alpha^2 \uptau^2   N }{  R}   \sum_{r=0}^{R-1}    \cE_r   	+  12 \alpha^2 \gamma^2 \uptau    \sigma^2
	+ 8 \alpha^2 \uptau   N \sigma^2.
\end{align}  
Hence,
\begin{align}  \label{bound_noncvx_cons_final_fed}
	\frac{1}{R} \sum_{r=0}^{R-1}		\Ex	\|	\z^{r} -\bar{\z}^{r} \|  	
	& \leq  
	\frac{3   \|\z^0 -\bar{\z}^0\|^2}{R} 
	+  \tfrac{2 \alpha^2 \uptau^2   N }{  R}   \sum_{r=0}^{R-1}    \cE_r   	+  12 \alpha^2 \gamma^2 \uptau    \sigma^2
	+ 8 \alpha^2 \uptau   N \sigma^2.
\end{align}  
Substituting inequality \eqref{bound_noncvx_cons_final_fed} into \eqref{sum_grad_ineq_noncvx_proof_fed} and rearranging, we obtain 
\begin{align} 
	\left(1-64 \alpha^2 \uptau^2 L^2     \right)	\frac{1}{R}	 \sum_{r=0}^{R-1} \cE_r  		 
	&\leq \frac{4   \tilde{f}(\bar{x}^{0})}{\alpha \gamma \uptau R}	+ 	 \frac{96 L^2     \|\z^0 -\bar{\z}^0\|^2}{  N R}  
	\nonumber \\
	& \quad + \frac{384 \alpha^2 \gamma^2 \uptau   L^2 \sigma^2     }{ N }	
	+ 264 \alpha^2 \uptau L^2    \sigma^2  +	\frac{ 4 \alpha \gamma  L \sigma^2 }{ N}.
\end{align}		
If we set $1-64 \alpha^2 \uptau^2 L^2  \geq  1/2$, then it holds that
\begin{align} 
	\frac{1}{R}	 \sum_{r=0}^{R-1} \cE_r  		 
	&\leq \frac{8   \tilde{f}(\bar{x}^{0})}{\alpha \gamma \uptau R}	+ 	 \frac{192 L^2     \|\z^0 -\bar{\z}^0\|^2}{  N R}  
	+ \frac{768 \alpha^2 \gamma^2 \uptau   L^2 \sigma^2     }{ N }	
	+ 528 \alpha^2 \uptau L^2    \sigma^2  +	\frac{ 8 \alpha \gamma  L \sigma^2 }{ N} \nonumber \\
		&\leq \frac{8   \tilde{f}(\bar{x}^{0})}{\alpha \gamma \uptau R}	+ 	 \frac{192 L^2  \alpha^2 \uptau^2  \varsigma^2}{  R}  
	+ \frac{768 \alpha^2 \gamma^2 \uptau   L^2 \sigma^2     }{ N }	
	+ 528 \alpha^2 \uptau L^2    \sigma^2  +	\frac{ 8 \alpha \gamma  L \sigma^2 }{ N}.
\end{align}	
where  $\varsigma_{0}^2=(1/N) \sum\limits_{i=1}^N \|\grad f_i(\bar{x}^0) - \grad f(\bar{x}^0)\|^2$ and the last step holds from \eqref{z_def_fed} and the fact that $\y^0=\zero$.
 Let $\tilde{\alpha}=\alpha \gamma \uptau$, then
 \begin{align} 
 	\frac{1}{R}	 \sum_{r=0}^{R-1} \cE_r  		 
 	&\leq \frac{8   \tilde{f}(\bar{x}^{0})}{\tilde{\alpha} R}	+ 	 \frac{192 L^2  \tilde{\alpha}^2  \varsigma_{0}^2}{ \gamma^2  R}  
 	+ \frac{768 \tilde{\alpha}^2   L^2 \sigma^2     }{\uptau N }	
 	+ 528 \frac{\tilde{\alpha}^2 L^2    \sigma^2}{\uptau \gamma^2}  +	\frac{ 8 \tilde{\alpha}  L \sigma^2 }{\uptau N}.
 \end{align}
 Setting $\gamma=\sqrt{N}$, we obtain
  \begin{align} 
 	\frac{1}{R}	 \sum_{r=0}^{R-1} \cE_r  		 
 	&\leq \frac{8   \tilde{f}(\bar{x}^{0})}{\tilde{\alpha} R}	+ 	 \frac{192 L^2  \tilde{\alpha}^2  \varsigma_{0}^2}{ N  R}  
 	+ \frac{768 \tilde{\alpha}^2   L^2 \sigma^2     }{\uptau N }	
 	+ 528 \frac{\tilde{\alpha}^2 L^2    \sigma^2}{\uptau N}  +	\frac{ 8 \tilde{\alpha}  L \sigma^2 }{\uptau N} \nonumber \\
 		&\leq \frac{c   \tilde{f}(\bar{x}^{0})}{\tilde{\alpha} R}	+ 	 \frac{c L^2  \tilde{\alpha}^2  \varsigma_{0}^2}{ N  R}  
   +	\frac{ c \tilde{\alpha}  L \sigma^2 }{\uptau N},
 \end{align}
 where we used  $\alpha \gamma \uptau = \tilde{\alpha} \leq \frac{1}{4 \sqrt{2} L}$ and $C$ is a constant.	
 	The final rate can be obtained by tuning the stepsize in a way similar to \cite{karimireddy2019scaffold}. Specifically, letting $\alpha \gamma \uptau = \tilde{\alpha} = \cO \left( \min\{\frac{1}{L},\frac{ \tilde{f}(\bar{x}^{0})}{\sigma} \sqrt{\frac{N \uptau}{R}}\} \right)$ and using $\uptau \leq R$ yields:
 		\begin{align} 
 		\frac{1}{R}	 \sum_{r=0}^{R-1} \cE_r  		 
 		&\leq \cO \left(	\frac{ \sigma }{ \sqrt{N R \uptau}} +  \frac{   \tilde{f}(\bar{x}^{0}) + \varsigma_{0}^2}{  R } .
 	 \right)
 	\end{align}
The proof for the convex cases can also be specialized for the centralized case to obtain the rate given in Table \ref{table_rates}.	
			
\end{document}